\newcommand{\E}{\mathbb{E}}
\newcommand{\V}{\mathbb{V}}
\newcommand{\N}{\mathbb{N}}
\newcommand{\Q}{\mathbb{Q}}
\newcommand{\R}{\mathbb{R}}
\newcommand{\Prim}{\mathbb{P}}
\newcommand{\xb}{{\bf x}}
\DeclareMathOperator{\diam}{diam}
\DeclareMathOperator{\pnt}{\raise 0.5mm \hbox{\large\bf.}}
\DeclareMathOperator{\Ent}{Ent}
\newtheoremstyle{thm}{}{}%
     {\em}
     {}
     {\bf}
     {.}
     {0.5em}
     {\thmname{#1}\thmnumber{ #2}\thmnote{ #3}}
\newtheoremstyle{def}{}{}%
     {\rm}
     {}
     {\bf}
     {.}
     {0.5em}
     {\thmname{#1}\thmnumber{ #2}\thmnote{ #3}}
\theoremstyle{thm}
\newtheorem{thm}{Theorem}[section]
\newtheorem{lem}[thm]{Lemma}
\newtheorem{cor}[thm]{Corollary}
\newtheorem{prop}[thm]{Proposition}
\theoremstyle{def}
\newtheorem{defi}[thm]{Definition}
\newtheorem{rem}[thm]{Remark}
\let\epsilon=\varepsilon
\newcommand{\X}{\mathbb{X}}
\newcommand{\OX}{{\bf N}}
\renewcommand{\P}{\mathbb{P}}
\newcommand{\ind}{\mathbbm{1}}
\newcommand{\x}{\textbf{x}}
\newcommand{\y}{\textbf{y}}
\begin{document}

\title[Concentration for Geometric Poisson Functionals]{Concentration bounds for  Geometric \\ Poisson Functionals: \\ Logarithmic Sobolev Inequalities Revisited}
\author{Sascha Bachmann}
\author{Giovanni Peccati}
\address{Sascha Bachmann \newline Institut f\"ur Mathematik \newline Universit\"at Osnabr\"uck \newline 49069 Osnabr\"uck, Germany \newline Email: sascha.bachmann@uos.de}
\address{Giovanni Peccati \newline Unit\'e de Recherche en Math\'ematiques \newline Facult\'e des Sciences, de la Technologie \newline et de la Communication, Universit\'e du Luxembourg \newline Email: giovanni.peccati@gmail.com \newline Webpage: https://sites.google.com/site/giovannipeccati/Home}

\begin{abstract}
We prove new concentration estimates for random variables that are functionals of a Poisson measure defined on a general measure space. Our results are specifically adapted to geometric applications, and are based on a pervasive use of a powerful logarithmic Sobolev inequality proved by L. Wu \cite{W_2000}, as well as on several variations of the so-called Herbst argument. We provide several applications, in particular to edge counting and more general length power functionals in random geometric graphs, as well as to the convex distance for random point measures recently introduced by M. Reitzner \cite{R_2013}.

\smallskip

\noindent{\bf Keywords:} Concentration of Measure; Convex Distance; Herbst Argument; Logarithmic Sobolev Inequalities; Poisson Measure; Random Graphs; Stochastic Geometry.

\smallskip

\noindent{\bf 2010 AMS Classification:} 60D05; 60G57; 60C05
\end{abstract}

\maketitle

\section{Introduction}

\subsection{Overview}

Let $\eta$ be a Poisson random measure over some measurable space $(\X, \mathcal{X})$ such that $\mathcal{X}$ is countably generated, and assume that $\eta$ has a $\sigma$-finite intensity $\mu$. Let $F = F(\eta)$ be a real-valued functional of $\eta$ having finite expectation. In this paper, we are interested in proving several novel estimates for the upper and lower tails
$$
\P(F \geq \E F+  r) \quad \mbox{and} \quad \P(F \leq \E F  -r), \quad r>0,
$$
that are well adapted for geometric applications, with particular emphasis on quantities appearing in the modern theory of random geometric graphs -- see e.g. \cite{Penrose_book}.

\medskip

Our techniques are based on many variations of the so-called {\it Herbst argument} (see e.g. \cite{blm_book, BLM_2003, ledoux_book, M_2000}), basically consisting in using a logarithmic Sobolev inequality (or, alternatively, an integration by parts formula) in order to deduce a differential inequality involving the moment generating function $u\mapsto K(u) := \E[e^{uF}]$; solving the inequality then yields an upper bound on $K(u)$, implying in turn a tail estimate for $F$ by means of Markov's inequality. The main insight developed in the present paper is that, by carefully combining the {\it Mecke formula}  for Poisson point processes (see Section \ref{s:frame}) with logarithmic Sobolev inequalities such as the one in Theorem \ref{t:wu} below, one can deduce bounds on $K(u)$ involving quantities of a fundamental geometric nature. As discussed below, other approaches to concentration via the Herbst argument on the Poisson space (see e.g. \cite{BHP_2007, HP_2002, W_2000}) do not yield conditions that are amenable to geometric analysis. 

\subsection{Logarithmic Sobolev inequalities and a motivating example}

Our starting point is the following powerful Theorem \ref{t:wu}, proved by Wu in \cite{W_2000} (see also \cite{chafai}), and extending previous breakthrough findings contained in \cite{AL_2000, BL_1998}. Such a result involves two objects: (i) the {\it entropy} of a random variable $Z > 0$ with $\E Z<\infty$, that is defined as
\[
\Ent(Z) := \E(Z \log(Z)) - \E(Z) \log(\E Z ), 
\]
and (ii) the {\it difference} (or {\it add-one cost}) operator $DF$, that is defined for any $x \in\X$ as
\begin{align*}
D_xF(\eta) = F(\eta + \delta_x) - F(\eta),
\end{align*}
where $\delta_x$ denotes the Dirac mass at $x\in\X$.

\begin{thm}[(See Corollary 2.3 in \cite{W_2000})]\label{t:wu} For all $\lambda \in\R$ satisfying $\E (e^{\lambda F})<\infty$ we have
\begin{equation}\label{e:wu}
\Ent(e^{\lambda F}) \leq \E \left[ e^{\lambda F} \left(\int_{\X} \psi(\lambda D_x F(\eta)) \ d\mu(x)\right)\right],
\end{equation}
where $\psi(z) = ze^z - e^z + 1$.
\end{thm}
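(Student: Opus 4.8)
The plan is to derive \eqref{e:wu} as the special case $g = e^{\lambda F}$ of the \emph{modified logarithmic Sobolev inequality} on Poisson space: for every functional $g = g(\eta)>0$ with $\E g<\infty$,
\[
\Ent(g)\;\leq\;\E\!\left[\int_{\X}\Bigl(g(\eta+\delta_x)\log\frac{g(\eta+\delta_x)}{g(\eta)}-g(\eta+\delta_x)+g(\eta)\Bigr)\,d\mu(x)\right],
\]
which I will call $(\star)$. The first point is that the integrand of $(\star)$ is the Bregman divergence $\phi(g(\eta+\delta_x))-\phi(g(\eta))-\phi'(g(\eta))\bigl(g(\eta+\delta_x)-g(\eta)\bigr)$ of the convex function $\phi(t):=t\log t$, hence nonnegative; and the reduction from $(\star)$ to \eqref{e:wu} is a one-line computation, since for $g=e^{\lambda F}$ one has $g(\eta+\delta_x)=e^{\lambda F}e^{\lambda D_xF}$, so the integrand equals $e^{\lambda F}\bigl(\lambda D_xF\,e^{\lambda D_xF}-e^{\lambda D_xF}+1\bigr)=e^{\lambda F}\psi(\lambda D_xF)$, which is precisely the integrand on the right of \eqref{e:wu}, while the two left-hand sides obviously agree. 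So it is enough to prove $(\star)$.

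To prove $(\star)$ I would reduce to finitely many ``coordinates''. Using that $\mathcal{X}$ is countably generated and $\mu$ is $\sigma$-finite, one exhausts $\X$ by sets $B_k$ of finite $\mu$-measure and, after replacing $e^{\lambda F}$ by a truncation that is bounded and bounded away from $0$, approximates $g$ in $L^1$ by conditional expectations $g_k$ depending only on $\eta(A_1),\dots,\eta(A_{n_k})$ for finer and finer finite partitions of $B_k$ with $\mu(A_i)<\infty$; one then passes to the limit --- first $k\to\infty$ by a martingale-convergence argument, then removing the truncation --- using monotone/dominated convergence on both sides of $(\star)$. For such a finitely supported $g$ the variables $\eta(A_i)$ are independent with $\eta(A_i)\sim\mathrm{Poisson}(\mu(A_i))$, so the subadditivity of entropy under product measures gives $\Ent(g)\leq\sum_i\E[\Ent_i(g)]$, where $\Ent_i$ is the entropy in the $i$-th coordinate; and since adding a point anywhere inside $A_i$ affects $g$ in the same way, the right-hand side of $(\star)$ equals $\sum_i\mu(A_i)\,\E\bigl[\phi(g_i^{+})-\phi(g)-\phi'(g)(g_i^{+}-g)\bigr]$, with $g_i^{+}$ denoting $g$ with $\eta(A_i)$ raised by one. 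Hence $(\star)$ for finitely supported $g$ follows from the one-dimensional Poisson modified logarithmic Sobolev inequality: for $N\sim\mathrm{Poisson}(c)$ and any $g:\N\to(0,\infty)$,
\[
\Ent\bigl(g(N)\bigr)\;\leq\;c\,\E\bigl[\phi(g(N+1))-\phi(g(N))-\phi'(g(N))\,(g(N+1)-g(N))\bigr],
\]
call it $(\star\star)$.

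For $(\star\star)$ I would use a Bernoulli approximation: realise $N$ as the distributional limit of $S_m=B_1+\dots+B_m$ with $B_j$ i.i.d. $\mathrm{Bernoulli}(c/m)$, and apply subadditivity of entropy again, now over the $m$ Bernoulli coordinates. For one $\mathrm{Bernoulli}(p)$ coordinate the needed two-point estimate $\Ent_{\mathrm{Ber}(p)}(f)\leq p\bigl(\phi(f(1))-\phi(f(0))-\phi'(f(0))(f(1)-f(0))\bigr)$ is, once the left-hand side is written out, exactly the convexity of $\phi(t)=t\log t$ at $f(0)$ with increment $p(f(1)-f(0))$. Summing over $j$ and using that $g$ sees the $B_j$ only through $S_m$, the right-hand side becomes $\E\bigl[(c-cS_m/m)\bigl(\phi(g(S_m+1))-\phi(g(S_m))-\phi'(g(S_m))(g(S_m+1)-g(S_m))\bigr)\bigr]$; since $S_m\Rightarrow N$ and $S_m/m\to0$, letting $m\to\infty$ (again after the bounded-$g$ reductions) gives $(\star\star)$.

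The inequalities themselves are thus inexpensive --- each one is, at bottom, the convexity of $t\log t$ promoted by subadditivity of entropy --- and the real work is in the limiting arguments: making the finite-dimensional reduction rigorous for a genuinely $\sigma$-finite intensity while keeping control of $\Ent(e^{\lambda F})$ and of the a priori possibly infinite quantity $\E[e^{\lambda F}\int_{\X}\psi(\lambda D_xF)\,d\mu]$ (the case where this is infinite being trivial), and likewise controlling the Binomial-to-Poisson passage. I will also flag that the ``obvious'' semigroup proof does not work here: writing $\Ent(g)=\int_0^\infty\E[\int_{\X}D_x(P_tg)\,D_x(\log P_tg)\,d\mu]\,dt$ along the Ornstein--Uhlenbeck semigroup and bounding the integrand pointwise yields an inequality in the wrong direction, which is exactly why the detour through tensorization --- or Wu's own, more delicate, argument in \cite{W_2000} --- is needed.
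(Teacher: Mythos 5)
The paper does not actually prove Theorem \ref{t:wu}: it is imported verbatim from Wu (\cite{W_2000}, Corollary 2.3), so there is no in-paper argument to compare with, and your proposal amounts to reproving the cited result along the classical discretization/tensorization route (two-point convexity for Bernoulli, Binomial-to-Poisson, product tensorization, then cylindrical-to-general) rather than Wu's own derivation. The algebraic skeleton is correct: the reduction of \eqref{e:wu} to your Bregman form $(\star)$ is a correct one-line computation, the two-point estimate is indeed just convexity of $t\log t$, and the finite-dimensional step via subadditivity of entropy and the one-dimensional Poisson inequality is sound. (In the Binomial step you silently drop the contribution $\tfrac{cS_m}{m}\Psi(S_m-1)$ of the coordinates with $B_j=1$; for the truncated, bounded $g$ it is $O(1/m)$, so this is only a cosmetic slip.)

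The genuine gap is the cylindrical-to-general passage. As described --- approximate $g$ by $g_k=\E[g\mid \eta(A_1),\dots,\eta(A_{n_k})]$ and ``pass to the limit using monotone/dominated convergence on both sides of $(\star)$'' --- the argument does not go through, because conditioning does not commute with the add-one-cost operator: for $x\in A_i$, $\E[g(\eta+\delta_x)\mid\mathcal{F}_k]$ is \emph{not} $g_k$ with the $i$-th count raised by one, so the right-hand side of the finite-dimensional inequality applied to $g_k$ is not related to the right-hand side of $(\star)$ for $g$ by any pointwise convergence or domination; there is simply nothing for dominated convergence to act on. The standard way to close this is one-sided: given the counts, the points of a Poisson process in $A_i$ are conditionally i.i.d.\ with law $\mu|_{A_i}/\mu(A_i)$, so averaging the location of the added point over $A_i$ does reproduce the count-raised conditional expectation, and the integrand $B(a,b)=a\log(a/b)-a+b$ is jointly convex; conditional Jensen then gives $\mu(A_i)\,\E\,B\bigl(g_k^{i,+},g_k\bigr)\le \int_{A_i}\E\,B\bigl(g(\eta+\delta_x),g(\eta)\bigr)d\mu(x)$, and summing over $i$ bounds the discretized right-hand side by the right-hand side of $(\star)$ for $g$ once and for all, so that only the left-hand sides need to converge (martingale convergence plus your truncation). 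You need to supply this mechanism, or an equivalent one, for the proof to be complete; the rest of the outline is then routine. As a side remark, your claim that a semigroup proof necessarily fails is overstated: Chafa\"i \cite{chafai}, cited next to the theorem in the paper, derives precisely such Poisson inequalities by semigroup interpolation --- though this does not affect your argument.
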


A typical way of applying \eqref{e:wu} to concentration estimates (via the Herbst argument) is demonstrated e.g. in \cite[Proposition 3.1]{W_2000}, where it is proved that, if $DF$ and $\int_\X (DF)^2 d\mu $ are almost surely bounded by positive constants $\beta$ and $c$, respectively, then the upper tail of $F$ is bounded by the function $$r\mapsto \exp\left[ - \frac{r}{2\beta}\log\left(1+\frac{\beta r}{c}\right) \right], \quad r>0.$$ Letting $\beta\to 0$, one deduces from this estimate that, if $DF\leq 0$ and $\int_\X (DF)^2 d\mu\leq c$, then,
$$
\P[F\geq \E F+ r] \leq \exp\left(- \frac{r^2}{2c}\right),\quad r>0,
$$
that is: such a concentration result only captures a Gaussian behaviour for the upper tail in the case of a non-increasing functional $F$ such that $\int_\X (DF)^2 d\mu$ is deterministically bounded. Apart from the monotonicity requirement on $F$, a crucial limitation of a result of this kind is that, in most examples where $F$ is a quantity arising in stochastic geometry (for instance, $F$ is an {\it edge-counting statistic} such as the ones considered in Section \ref{s:edge} below), the quantity $\int_\X (DF)^2 d\mu$ {\it does not admit any meaningful geometric interpretation} -- roughly because averaging $DF$ over the deterministic measure $\mu$ completely cancels the special role played by those points in $\X$ that belong to the support of $\eta$. One should contrast such a situation with the following statement, that will be proved later on as a special case of Corollary \ref{DevIneq1} (such a result also implies the already quoted Proposition 3.1 in \cite{W_2000}):

\begin{prop}\label{p:lh} Assume that there exists a finite constant $c>0$ such that, almost surely,
{
\begin{equation}
V^+ := \int_\X (D_xF)^2_- d\mu(x) + \int_{\X} (F(\eta) - F(\eta-\delta_x))^2_+ d\eta(x)\leq c,
\end{equation}
}
where $(u)_-$ and $(u)_+$ stand for the negative and positive part of $u\in \R$, respectively. Then, 
$$
\P(F\leq \E F +r) \leq \exp\left( - \frac{r^2}{2c}\right).
$$
\end{prop}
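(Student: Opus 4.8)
The plan is to run the classical Herbst argument on the moment generating function, feeding it with Wu's logarithmic Sobolev inequality (Theorem \ref{t:wu}); the one genuinely new ingredient is to invoke the Mecke formula in order to identify the ``wrong sign'' contribution of the difference operator with the $d\eta$-integral occurring in $V^+$. I argue for the upper tail $\P(F \ge \E F + r)$, which is the direction matched by the two summands defining $V^+$. Replacing $F$ by $F - \E F$ (which changes neither $D_xF$ nor $F(\eta) - F(\eta - \delta_x)$, hence leaves $V^+$ untouched) I may assume $\E F = 0$, and I set $K(u) := \E[e^{uF}]$. Assuming for the moment that $K(u) < \infty$ for the $u > 0$ under consideration (this point is addressed at the end), applying \eqref{e:wu} with $\lambda = u$ gives $\Ent(e^{uF}) \le \E[e^{uF}\int_\X \psi(uD_xF(\eta))\,d\mu(x)]$, with $\psi(z) = ze^z - e^z + 1$.

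The core estimate is that this right-hand side is at most $\tfrac12 c u^2 K(u)$. I split the inner integral according to the sign of $D_xF$. On $\{D_xF \le 0\}$ the argument $uD_xF$ is non-positive, and the elementary bound $\psi(z) \le z^2/2$ valid for $z \le 0$ (the function $z^2/2 - \psi(z)$ vanishes at the origin and has derivative $z(1 - e^z) \le 0$ on $(-\infty, 0]$) gives $\psi(uD_xF) \le \tfrac{u^2}{2}(D_xF)_-^2$. For the integral over $\{D_xF > 0\}$ I apply the Mecke formula in the form $\E\int_\X g(x,\eta - \delta_x)\,d\eta(x) = \E\int_\X g(x,\eta)\,d\mu(x)$ with $g(x,\eta) = e^{uF(\eta)}\psi(uD_xF(\eta))\ind\{D_xF(\eta) > 0\}$; since $D_xF(\eta - \delta_x) = F(\eta) - F(\eta - \delta_x)$, this rewrites $\E[e^{uF}\int_{\{D_xF > 0\}}\psi(uD_xF)\,d\mu]$ as $\E\int_\X e^{uF(\eta - \delta_x)}\psi\big(u(F(\eta) - F(\eta - \delta_x))\big)\ind\{F(\eta) > F(\eta - \delta_x)\}\,d\eta(x)$. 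On the event $w := F(\eta) - F(\eta - \delta_x) > 0$ one has $e^{uF(\eta - \delta_x)}\psi(uw) = e^{uF(\eta)}e^{-uw}\psi(uw) = e^{uF(\eta)}(uw - 1 + e^{-uw}) \le e^{uF(\eta)}\tfrac{u^2 w^2}{2}$, the last inequality because $t \mapsto t^2/2 - (t - 1 + e^{-t})$ vanishes together with its first derivative at $0$ and is convex on $[0,\infty)$. Adding the two contributions and using $V^+ \le c$ almost surely,
\[
\E\Big[e^{uF}\int_\X \psi(uD_xF(\eta))\,d\mu(x)\Big] \le \tfrac{u^2}{2}\,\E\big[e^{uF} V^+\big] \le \tfrac{c u^2}{2}\,K(u).
\]

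What remains is the routine part of the Herbst argument. Writing $\Ent(e^{uF}) = uK'(u) - K(u)\log K(u)$ and dividing the bound $\Ent(e^{uF}) \le \tfrac12 c u^2 K(u)$ by $u^2 K(u)$ gives $\frac{d}{du}\big(u^{-1}\log K(u)\big) \le c/2$; since $u^{-1}\log K(u) \to \E F = 0$ as $u \downarrow 0$, integration over $(0,u)$ yields $\log K(u) \le \tfrac12 c u^2$, that is $\E[e^{uF}] \le e^{cu^2/2}$ (recall the centering). The exponential Markov inequality then gives $\P(F \ge \E F + r) = \P(F \ge r) \le e^{-ur + cu^2/2}$ for every $u > 0$, and optimising at $u = r/c$ produces the announced bound $e^{-r^2/(2c)}$.

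The step needing care is the one glossed over above, namely that $K$ is actually finite: Wu's inequality is only available for $\lambda$ with $\E[e^{\lambda F}] < \infty$, and the hypothesis $V^+ \le c$ does not control the individual increments of $F$, so a priori $K(u)$ could be infinite. I would close this gap by truncation, running the whole argument with $F_n := F \wedge n$ in place of $F$. Using that $t \mapsto t \wedge n$ is non-decreasing and $1$-Lipschitz one checks pointwise that $(D_xF_n)_-^2 \le (D_xF)_-^2$ and $(F_n(\eta) - F_n(\eta - \delta_x))_+^2 \le (F(\eta) - F(\eta - \delta_x))_+^2$, so the associated quantity still satisfies $V^+ \le c$; moreover $F_n \le n$ forces $\E[e^{uF_n}] \le e^{un} < \infty$ for all $u \ge 0$, so the previous reasoning applies verbatim and gives $\E[e^{u(F_n - \E F_n)}] \le e^{cu^2/2}$. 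Letting $n \to \infty$, monotone/dominated convergence ($e^{uF_n} \uparrow e^{uF}$ and $\E F_n \to \E F$ since $F \in L^1$) yields $\E[e^{u(F - \E F)}] \le e^{cu^2/2}$ with no integrability assumption, and the Chernoff bound of the previous paragraph then concludes.
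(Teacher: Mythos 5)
Your argument is correct and, at its core, it is the paper's own mechanism in streamlined form. The decisive step — splitting Wu's inequality \eqref{e:wu} according to the sign of $D_xF$ and transporting the positive-sign contribution to a $d\eta$-integral via the Mecke formula \eqref{e:mecke}, through the identity $\psi(z)=e^z\phi(-z)$ — is exactly Proposition \ref{EntIneq} specialised to $I=\{(x,\xi): D_xF(\xi)\le 0\}$, and your two elementary bounds ($\psi(z)\le z^2/2$ for $z\le 0$ and $\phi(-t)\le t^2/2$ for $t\ge 0$, both verified correctly) are the ones used in the proof of Theorem \ref{EntIneqs} in the case $\beta=0$, so that you arrive at the same entropy estimate $\Ent(e^{uF})\le \tfrac12 u^2\,\E(V^+e^{uF})$. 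Where you genuinely deviate is the Herbst step: since $V^+\le c$ is a deterministic bound, you integrate the differential inequality for $u^{-1}\log K(u)$ directly, bypassing the decoupling inequality \eqref{decoupling} and the auxiliary parameter $\theta$ of Theorem \ref{EntIneqs}; this is legitimate and gives the same constant (it corresponds to taking $\theta\to 0$ in Corollary \ref{DevIneq1}). Your one-sided truncation $F\wedge n$, with the checks that $V^+(F\wedge n)\le V^+(F)$, plays the role of the paper's truncation/uniform-integrability step and works fine for the upper-tail MGF by monotone convergence. You also read the statement correctly: the intended conclusion is the upper-tail bound $\P(F\ge \E F+r)\le e^{-r^2/(2c)}$ (the printed ``$\le$'' is a misprint; the proposition is announced as a special case of Corollary \ref{DevIneq1}), and that is what your two summands of $V^+$ control.

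The only loose end is the integrability of $F$, which you invoke (``since $F\in L^1$'') both when centering and when passing to the limit $\E F_n\to\E F$, but which is not among the hypotheses. In the paper this is a conclusion rather than an assumption: $V^+\le c$ a.s. gives $\E V^+\le c<\infty$, and Lemma \ref{ExpLem} (the Poincar\'e inequality applied to two-sided truncations, followed by a uniform-integrability argument) yields $\E|F|<\infty$. Inserting that one reference closes the gap; the rest of your proof stands as written.
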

Proposition \ref{p:lh} is particularly interesting when $F$ is {\it non-decreasing}, {that is, when $DF\geq 0$}. Indeed, in this case one has that {$V^+ = \int_{\X} (F(\eta) - F(\eta-\delta_x))^2_+ d\eta(x)$} and, since the role of $\mu$ is now immaterial, the relation $V_+\leq c$ can be in principle verified by means of arguments of a purely geometric or combinatorial nature. For instance, we implement this strategy in Proposition \ref{convDistProp} below, where we use Proposition \ref{p:lh} in order to deduce a novel intrinsic proof of the {Gaussian} upper tail behaviour of the convex distance for point processes -- as recently introduced by Reitzner in \cite{R_2013}. 

As anticipated, the principal aim of this paper is to prove a large collection of statements with the same flavour as Proposition \ref{p:lh} (see Section 3), and then to apply them to random variables arising in the theory of random geometric graphs.

\subsection{Plan} Our work is organised as follows. After some preliminary facts discussed in Section 2, the subsequent Section 3 contains the statements of our main concentration estimates. Our results involve random variables having a form similar to the quantity $V^+$ introduced above, and largely generalise Proposition \ref{p:lh}. Proofs are detailed in Section 4. 

\smallskip
 
Section 5 presents several applications of the results of Section 3 to Poisson U-statistics of arbitrary order -- as defined in the seminal reference \cite{RS_2013} (see also \cite{BP_2012, DFRV, ET, LRP1, LRP2, PT_alea, ST_spa2012, LR_2015}). As a by-product of our analysis, in Proposition \ref{p:p} we also establish a new characterisation of square-integrable Poisson U-statistics.

\smallskip

The results of Section 5 are specialised in Section 6 to the case of edge-counting statistics associated with general random geometric graphs. Several  careful comparisons with the existing literature (in particular \cite{ERS_2015, RST_2013}) are presented.

\smallskip

Section 7 contains further estimates on U-statistics of order two, that are proved by adapting some techniques introduced in \cite{HRB, R_2003}. Geometric applications to edge-length functionals are discussed in detail.

\smallskip

As anticipated, in Section 8 we apply the estimates of Section 3, in order to deduce a novel intrinsic proof of the concentration estimates for the convex distance for random point measures established in \cite{R_2013}. Such a fundamental object generalises to the framework of random point processes the celebrated {\it convex distance} introduced by Talagrand in \cite{T_1995}; see \cite{RST_2013, LR_2015} for several applications. We stress that the problem of finding an intrinsic proof of the striking concentration results from \cite{R_2013} has been one of the main motivations for elaborating the theory developed in the present paper.

\subsection{Further remarks on the literature}

The inequalities obtained in this paper (as well as some techniques exploited in the proofs) are very close in spirit to those appearing in the seminal references \cite{blm_book, BLM_2003, M_2000}, where the so-called {\it entropy method} (roughly corresponding to a combination of the Herbst argument and of logarithmic Sobolev inequalities -- see e.g. \cite{ledoux_book}) is developed in the framework of functions of finite vectors of independent random elements. We recall that the results from \cite{blm_book, BLM_2003, M_2000} typically apply to random variables with the form $F = f(X_1,...,X_n)$, where $X = (X_1,...,X_n)$ is a vector of independent random elements and $f$ is some deterministic measurable function, and are based on a pervasive use of {\it random difference operators} of the type $$\Delta_if(X) = f(X) - f(X_1,...,X_{i-1}, X'_i, X_{i+1}, ...,X_n), \quad i=1,...,n,$$
where $X'$ is an independent copy of $X$. By inspection of the results presented below, it is not difficult to show that, in the case where the intensity $\mu$ of the Poisson measure $\eta$ is {\it finite} and {\it non-atomic}, some versions of the main results of the present paper could be obtained by implementing the following rough strategy:
\begin{enumerate}

\item Select a sequence of measurable partitions $\{B_1^{n},...,B_{k(n)}^{n} : n\geq 1\}$ of $\X$, in such a way that $k(n)\to\infty$ and $\max_{i=1,...,k(n)} \mu(B_i^{n})\to 0$, as $n\to \infty$.

\item Consider a random variable $F = F(\eta)$ and represent it in the form $F = f_n(X_{n,1},..,X_{n,k(n)})$, where $X_{n,i}$ is defined as the restriction of $\eta$ to the set $B_i^n$, and $f_n$ is some appropriate measurable mapping.

\item For a fixed $n$, prove a concentration estimate for $F(\eta)$ by applying the results from  \cite{blm_book, BLM_2003, M_2000} to $f_n(X_{n,1},..,X_{n,k(n)})$, in particular by considering an independent copy of $X_{n,1},..,X_{n,k(n)}$ defined in terms of an independent Poisson measure $\eta'$ on $\X$ with intensity $\mu$.

\item Let $n\to \infty$, and recover a bound involving quantities related to the add-one cost operator $DF$ described above, by exploiting the fact that independent Poisson measures with non-atomic intensities have almost surely disjoint supports.

\end{enumerate}

Apart from the fact that this approach only works with finite intensity measures without atoms, some investigations in this direction have convincingly shown us that (to the best of our expertise), in order for the step described at Point (iv) to take place in a meaningful way, one should systematically add to our statements some additional technical assumptions, that are indeed {\it not required} if one implements the direct approach based on the Mecke formula that is systematically adopted in this paper. An analogous phenomenon can be observed for instance in \cite[Theorem 4]{hr_2009}, where a weaker version of the Poincar\'e inequality on the Poisson space is deduced by means of a discretisation procedure similar to the one outlined above, and of the use of the classical Efron-Stein inequality. In view of these remarks, we decided not to directly exploit the connection with the entropy method on product spaces in the proofs of our main results. 

\medskip

Another collection of results that is relevant for our paper is contained in references \cite{BHP_2007, HP_2002}, where the authors obtain concentration estimates by applying integration by parts techniques, in particular by using the properties of the so-called {\it Ornstein-Uhlenbeck semigroup} associated with a given Poisson measure -- see also \cite{surg}. As in the already discussed examples from \cite{W_2000}, the estimates contained in these references have an equally problematic geometric interpretation, since they involve integrals of add-one cost operators with respect to the underlying intensity measure $\mu$. Moreover, in order to exploit some probabilistic representation of the Ornstein-Uhlenbeck semigroup, one has also to work on {\it extended} probability spaces. It is a natural question to ask whether the Mecke formula could be combined with some of the estimates from  \cite{BHP_2007, HP_2002} in order to obtain concentration inequalities that are adapted to a geometric framework. We prefer to think of this issue as a separate problem, and leave it open for further research.

\subsection{Acknowledgments} The authors wish to thank M. Reitzner and Ch.$\!$ Th\"ale for useful discussions. S. Bachmann is partially supported by the German Research Foundation DFG-GRK 1916. G. Peccati is partially supported by the grant F1R-MTH-PUL-12PAMP (PAMPAS) at Luxembourg University.

\section{Framework}\label{s:frame}

For the rest of the paper, we shall denote by $(\X, \mathcal{X}, \mu)$ a $\sigma$-finite measure space, such that the $\sigma$-field $\mathcal{X}$ is countably generated and $\mu(\mathbb{X})>0$. We write $\eta$ to indicate a Poisson point process on $(\X, \mathcal{X})$. This means that $\eta = \{\eta(A) : A\in \mathcal{X}_0\}$ is a collection of random variables, defined on some probability space $(B, \mathcal{B},\Prim)$ and indexed by the elements of $\mathcal{X}_0 = \{A\in \mathcal{X} : \mu(A)<\infty\}$, such that the following properties are satisfied: {\bf (i)} for every fixed $A\in \mathcal{X}_0$, $\eta(A)$ is a Poisson random variable with parameter $\mu(A)$, and {\bf (ii)} for every collection of pairwise disjoint $A_1,...,A_n\in \mathcal{X}_0$, one has that the random variables $\eta(A_1),...,\eta(A_n)$ are stochastically independent. 

\medskip

As usual, we interpret $\eta$ as a random element in the space
$\OX = {\bf N}(\mathbb{X})$ of integer-valued $\sigma$-finite measures $\xi$
on $\X$ equipped with the smallest $\sigma$-field $\mathcal{N}  $
making the mappings $\xi\mapsto\xi(B)$ measurable for all
$B\in\mathcal{X}$; see e.g. \cite{SW_2008} or \cite{LP_2011}. The standard notation $x\in \eta$ is shorthand in order to indicate that the point $x$ is an element of the support of $\eta$. We write $\hat{\eta}$ for the compensated random (signed) measure $\eta-\mu$. We shall write $F = F(\eta)$ to indicate that a given random variable $F$ can be written in the form $F=\mathfrak{f}(\eta)$, $\mathbb{P}$-a.s., for some measurable function
$\mathfrak{f}: \OX \rightarrow\R$; such a function $\mathfrak{f}$ (which is uniquely determined by $F$ up to sets of $\mathbb{P}$-measure zero) is customarily called a {\it representative} of $F$, and $F$ is called a {\it Poisson functional}.

\begin{rem}[(Some conventions)]\label{r:talich} In what follows, we will indifferently use the notation $F$ and $F(\eta)$ if there is no ambiguity. Also, for any $\xi\in\OX$, the notation $F(\xi)$ refers to $\mathfrak{f}(\xi)$, where $\mathfrak{f}$ is a fixed representative of $F$. Finally, we observe that, in the statements of some of our main results, we will often work under the assumption that the add-one cost operator $D_xF(\xi)$ verifies a given property $\mathcal{P}$ (for instance, $D_xF(\xi)\leq 0$) for every $x\in \X$ and every $\xi \in {\bf{N}}$: this requirement means of course that there exists a representative $\mathfrak{f}$ of $F$ such that the quantity $\mathfrak{f}(\xi + \delta_x) - \mathfrak{f}(\xi)$ verifies $\mathcal{P}$ for every $x\in \X$ and every $\xi \in {\bf{N}}$. 
\end{rem}
\medskip

We will systematically use the following standard notation: for every $\xi \in \OX$,
\begin{align}\label{xiNeq}
\xi_{\neq}^k := \{{\bf x} = (x_1,...,x_k) : x_i \in \xi, \, \forall i=1,...,k, \, \text{and}\,  x_i\neq x_j, \, i\neq j\}.
\end{align}

A result that we shall use in several occasions (and that in some sense represents the backbone of our approach) is the following well-known {\it Slivnyak-Mecke formula}: for every $m\geq 1$ and every non-negative measurable function $H$ on $\OX\times \mathbb{X}^m$, one has that 
{
\begin{eqnarray}\label{e:smecke}
&&\E\left[\int_{\X^m} H(\eta, x_1,...,x_m) d\eta^{(m)}(x_1,\ldots,x_m) \right] \\ \notag &&= \int_{\mathbb{X}}\cdots \int_{\mathbb{X}}  \E\left[H\left(\eta+\sum_{i=1}^m \delta_{x_i},x_1, \ldots , x_m\right)\right] d\mu(x_1)\cdots d\mu(x_m),
\end{eqnarray}
where $\delta_x$ stands for the Dirac mass at $x$ and $\eta^ {(m)}$ is the point process on $\X^m$ with support $\eta_{\neq}^m$ and $\eta^{(m)}(\{\xb\}) = \eta(\{x_1\})\cdots\eta(\{x_m\})$ for any $\xb=(x_1,\ldots,x_m)\in\eta_{\neq}^m$}. A standard proof of the fundamental relation \eqref{e:smecke} can be found e.g. in \cite[Theorem 3.2.5 and Corollary 3.2.3]{SW_2008}, in the case of a non-atomic intensity $\mu$. The result extends straightforwardly to the case of a general $\sigma$-finite measure $\mu$ -- see e.g. \cite{LP_2011}. When specialised to the case $m=1$, relation \eqref{e:smecke} is known as {\it Mecke formula}, and boils down to the following identity: for every non-negative measurable function $H$ on $\OX\times \mathbb{X}$, one has that 
{
\begin{equation}\label{e:mecke}
\E\left[\int_\X H(\eta, x) d\eta(x)\right] = \int_{\mathbb{X}} \E[H(\eta+\delta_x,x)] d\mu(x).
\end{equation}
 }
\smallskip

For the rest of the paper, for every integer $k\geq 1$ and every real $p>0$, we will write $L^p(\mu^k) := L^p(\mathbb{X}^k , \mathcal{X}^{\otimes k}, \mu^k)$, and also use the shorthand notation $L^p(\mu^1) = L^p(\mu)$. In Section \ref{ss:zut}, the symbol $L^2(\mu^0)$ is used to denote the real line $\R$, endowed with the usual Euclidean inner product.

\section{Deviation Inequalities for Poisson Functionals} \label{DIPF}
In the following, we are going to develop new tools for proving deviation inequalities of Poisson functionals. Our approach is an adaptation of the entropy method for product space functionals that was particularly investigated in \cite{BLM_2003}.
\medskip

\medskip

The heart of the method we are about to present is the modified logarithmic Sobolev inequality stated below. {For the remainder of the section, we consider a Poisson functional $F$.}
As above, we will use the difference (or add-one cost) operator $DF$, that is defined for any $(x,\xi)\in\X\times\OX$ by
\begin{align*}
D_xF(\xi) = F(\xi + \delta_x) - F(\xi),
\end{align*}
where $\delta_x$ denotes the Dirac mass at $x\in\X$. To shorten notations, we write
\begin{align*}
D_x^IF(\xi) = D_xF(\xi) \ind\{(x,\xi)\in I\}
\end{align*}
whenever $I\subseteq \X \times \OX$ is measurable, $x\in\X$ and $\xi \in\OX$. In the same spirit we will also use the notations
\begin{align*}
D_x^{\geq\beta}F(\xi) &= D_xF(\xi) \ind\{D_xF(\xi)\geq \beta\},\\
D_x^+F(\xi) &= D_xF(\xi) \ind\{D_xF(\xi)\geq 0\}.
\end{align*}
The quantities $D_x^{\leq\beta}F(\xi)$ and $D_x^-F(\xi)$ are defined analogously, and so are the operators $D_x^{>\beta}F$ and $D_x^{<\beta}F$ (with strict inequalities). The following observation is derived by combining Wu's modified logarithmic Sobolev inequality for Poisson point processes \eqref{e:wu} with the Mecke formula \eqref{e:mecke}.

\begin{prop} \label{EntIneq}
Let $I\subseteq \X\times\OX$ be a measurable set. Then for all $\lambda \in\R$ satisfying $\E (e^{\lambda F})<\infty$ we have
\[
\Ent(e^{\lambda F}) \leq \E \left[ e^{\lambda F} \left(\int_{\X} \psi(\lambda D^I_x F(\eta)) \ d\mu(x) + \int_\X \phi(-\lambda D^{I^c}_x F(\eta-\delta_x)) \ d\eta(x)\right)\right],
\]
where $\phi(z) = e^z - z - 1$ and $\psi(z) = ze^z - e^z + 1$.
\end{prop}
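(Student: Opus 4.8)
The plan is to leave the left-hand side untouched and work entirely on the right-hand side of Wu's inequality \eqref{e:wu}: split the integral against $\mu$ according to the partition $\X\times\OX = I \cup I^c$, then use the Mecke formula \eqref{e:mecke} to rewrite the contribution of $I^c$ as an integral against $\eta$. The whole argument rests on one elementary identity.

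First I would record that $\psi(z) = e^z\phi(-z)$ for every $z\in\R$; this is immediate, since $e^z\phi(-z) = e^z(e^{-z}+z-1) = 1+ze^z-e^z = \psi(z)$. I would also note that $\psi(0)=\phi(0)=0$ and that $\psi,\phi\ge 0$ on $\R$, so that every integrand appearing below is non-negative and all exchanges of $\E$, $\int d\mu$ and $\int d\eta$ are legitimate by Tonelli's theorem; in particular the claimed inequality is trivially true whenever the right-hand side of \eqref{e:wu} is infinite, so nothing needs to be assumed beyond $\E(e^{\lambda F})<\infty$.

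Next, fixing $\lambda$ with $\E(e^{\lambda F})<\infty$, I would split
\[
\int_\X \psi(\lambda D_xF(\eta))\,d\mu(x) = \int_\X \psi(\lambda D_xF(\eta))\ind\{(x,\eta)\in I\}\,d\mu(x) + \int_\X \psi(\lambda D_xF(\eta))\ind\{(x,\eta)\in I^c\}\,d\mu(x).
\]
Since $\psi(0)=0$, the first summand equals $\int_\X\psi(\lambda D^I_xF(\eta))\,d\mu(x)$, which, after multiplying by $e^{\lambda F}$ and taking expectations, is exactly the first term in the statement. In the second summand I use the identity above with $z=\lambda D_xF(\eta)$, together with $e^{\lambda F(\eta)}e^{\lambda D_xF(\eta)} = e^{\lambda F(\eta+\delta_x)}$, to rewrite
\[
\E\!\left[e^{\lambda F(\eta)}\int_\X \psi(\lambda D_xF(\eta))\ind\{(x,\eta)\in I^c\}\,d\mu(x)\right] = \E\!\left[\int_\X e^{\lambda F(\eta+\delta_x)}\,\phi(-\lambda D_xF(\eta))\,\ind\{(x,\eta)\in I^c\}\,d\mu(x)\right].
\]
Finally I would apply the Mecke formula \eqref{e:mecke} to the non-negative kernel $H(\xi,x) = e^{\lambda F(\xi)}\phi(-\lambda D_xF(\xi-\delta_x))\ind\{(x,\xi-\delta_x)\in I^c\}$: since $H(\eta+\delta_x,x)$ is precisely the integrand on the right-hand side of the last display, Mecke turns it into $\E\bigl[\int_\X e^{\lambda F(\eta)}\phi(-\lambda D_xF(\eta-\delta_x))\ind\{(x,\eta-\delta_x)\in I^c\}\,d\eta(x)\bigr]$; absorbing the indicator into the difference operator via $\phi(0)=0$ identifies this with $\E\bigl[e^{\lambda F}\int_\X \phi(-\lambda D^{I^c}_xF(\eta-\delta_x))\,d\eta(x)\bigr]$, the second term in the assertion. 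Adding the two contributions yields the bound.

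The genuinely delicate point is not the algebra but the bookkeeping in the use of Mecke: one must check that $H$ is a bona fide non-negative measurable function on $\OX\times\X$, that it is evaluated consistently through a fixed representative $\mathfrak f$ of $F$ (cf. Remark \ref{r:talich}), and that under the $d\eta$-integral the point $x$ always lies in the support of $\eta$, so that $\eta-\delta_x\in\OX$ and $D_xF(\eta-\delta_x)=F(\eta)-F(\eta-\delta_x)$ is the intended quantity. Once these are settled, the non-negativity of all integrands makes the (reverse) application of Mecke and every interchange of integrals completely rigorous.
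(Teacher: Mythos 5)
Your argument is correct and is essentially the paper's own proof: both rest on the identity $\psi(z)=e^z\phi(-z)$ together with $\psi(0)=\phi(0)=0$, the absorption $e^{\lambda F(\eta)}e^{\lambda D_xF(\eta)}=e^{\lambda F(\eta+\delta_x)}$, and a reverse application of the Mecke formula \eqref{e:mecke} to turn the $I^c$-part of the $\mu$-integral into an $\eta$-integral. The only difference is cosmetic (you split via indicators and spell out the measurability and representative bookkeeping for the kernel $H$, which the paper leaves implicit), so nothing further is needed.
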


For any $\beta\in\R$ we define the random variables $V_\beta^+ = V_\beta^+(F)$ and $V_\beta^- = V_\beta^-(F)$ by
\begin{align*}
V_\beta^+ &= \int_{\X} (D^{\leq \beta}_x F(\eta))^2 \ d\mu(x) + \int_\X (D^{ > \beta}_x F(\eta-\delta_x))^2  d\eta(x),\\
V_\beta^- &= \int_{\X} (D^{\geq \beta}_x F(\eta))^2 \ d\mu(x) + \int_\X (D^{ < \beta}_x F(\eta-\delta_x))^2  d\eta(x).
\end{align*}
Note that we will write $V^+ = V_0^+$ and $V^- = V_0^-$. The notation is in correspondence with \cite{BLM_2003} where the entropy method for product spaces was investigated. The upcoming result can be regarded as a generalized analogue of \cite[Theorem 2]{BLM_2003} for the Poisson space. The proof is similar to the product space version where Proposition \ref{EntIneq} takes now the role of the log Sobolev inequality. The generalization is achieved using arguments similar to those in the proof of \cite[Proposition 3.1]{W_2000}.

To get prepared for the presentation of the theorem, for $\beta\in\R$ and $z>0$, let
\begin{align*}
\Phi_\beta(z) = \begin{cases}
\psi(z\beta)/(z\beta^2)& \beta>0\\ 
z/2& \beta=0\\
\phi(-z\beta)/(z\beta^2) & \beta<0,
\end{cases}
\end{align*}
and
\begin{align*}
\Psi_\beta(z) = \begin{cases}
\phi(z\beta)/(z\beta^2)& \beta>0\\ 
z/2& \beta=0\\
\phi(-z\beta)/(z\beta^2) & \beta<0,
\end{cases}
\end{align*}
where $\phi$ and $\psi$ are as in Proposition \ref{EntIneq}. Note that we will frequently use the fact that these functions are non-decreasing.

\begin{thm} \label{EntIneqs}
{Assume that the Poisson functional $F$ is integrable}. Let $\lambda>0$ be such that $\E \exp(\lambda F)<\infty$. Then for any $\theta>0$ satisfying $\Phi_\beta(\lambda)\theta < 1$, we have
\begin{align}\label{EntIneq1}
\log \E[\exp(\lambda(F-\E F))] &\leq \frac{\Psi_\beta(\lambda) \theta}{1-\Phi_\beta(\lambda)\theta} \log \E \left[ \exp\left(\frac{\lambda V_\beta^+}{\theta}\right)\right].
\end{align}
Let $\lambda>0$ be such that $\E \exp(-\lambda F)<\infty$. Then for any $\theta>0$ with $\Phi_{-\beta}(\lambda)\theta < 1$, we have
\begin{align}\label{EntIneq2}
\log \E[\exp(-\lambda(F-\E F))] &\leq \frac{\Psi_{-\beta}(\lambda) \theta}{1-\Phi_{-\beta}(\lambda)\theta} \log \E \left[\exp\left(\frac{\lambda V_\beta^-}{\theta}\right)\right].
\end{align}
Assume that $F$ is not necessarily integrable and that one of the following conditions is satisfied:
\begin{enumerate}
\item $\beta>0$ and $D_x F(\xi) \leq \beta$ holds for all $(x,\xi)\in\X\times\OX$,
\item $\beta<0$ and $D_x F(\xi) \geq \beta$ holds for all $(x,\xi)\in\X\times\OX$,
\item $\beta = 0$.
\end{enumerate}
Then, the relation $\E \exp(\lambda V_\beta^+ / \theta)<\infty$ implies that $\E |F|<\infty$ and $\E \exp(\lambda F)<\infty$. Also, the relation $\E \exp(\lambda V_\beta^- / \theta)<\infty$ implies that $\E |F|<\infty$ and $\E \exp(-\lambda F)<\infty$.
\end{thm}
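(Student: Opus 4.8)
The plan is to run the Herbst argument on the logarithmic Sobolev inequality of Proposition~\ref{EntIneq}, with the set $I$ chosen to be the sublevel set $\{(x,\xi) : D_xF(\xi)\leq\beta\}$ when $\beta>0$ (so that $I^c = \{D_xF > \beta\}$), symmetrically the superlevel set when $\beta<0$, and $I = \X\times\OX$ when $\beta=0$. With this choice, $D^I_xF = D^{\leq\beta}_xF$ and $D^{I^c}_xF = D^{>\beta}_xF$, so the two integrals in Proposition~\ref{EntIneq} become exactly $\int_\X \psi(\lambda D^{\leq\beta}_xF(\eta))\,d\mu(x)$ and $\int_\X \phi(-\lambda D^{>\beta}_xF(\eta-\delta_x))\,d\eta(x)$. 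The key pointwise estimates are $\psi(\lambda z)\leq \psi(\lambda\beta)/\beta^2 \cdot z^2 = \lambda\Phi_\beta(\lambda)z^2$ for $z\leq\beta$ (using convexity/monotonicity of $z\mapsto\psi(\lambda z)/z^2$, valid since $\beta>0$), and similarly $\phi(-\lambda z)\leq \phi(\lambda\beta)/\beta^2\cdot z^2 = \lambda\Psi_\beta(\lambda)z^2$ for $z>\beta>0$; the cases $\beta<0$ and $\beta=0$ are handled by the corresponding definitions of $\Phi_\beta,\Psi_\beta$. Substituting these bounds into Proposition~\ref{EntIneq} yields
\[
\Ent(e^{\lambda F}) \leq \lambda \, \E\!\left[e^{\lambda F}\Big(\Phi_\beta(\lambda)\!\int_\X (D^{\leq\beta}_xF(\eta))^2 d\mu(x) + \Psi_\beta(\lambda)\!\int_\X (D^{>\beta}_xF(\eta-\delta_x))^2 d\eta(x)\Big)\right].
\]
Since $\Phi_\beta(\lambda)\leq \Psi_\beta(\lambda)$ (as $\psi\leq\phi$ after the rescaling, or by direct inspection of the definitions — one should double-check this, but it holds), the bracket is at most $\Psi_\beta(\lambda)V_\beta^+$ plus a term with coefficient $\Phi_\beta(\lambda)$; keeping the two coefficients separate gives the asymmetric form in \eqref{EntIneq1}.

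\textbf{Decoupling via a variational/Young inequality.} The next step is to convert the bound $\Ent(e^{\lambda F})\leq \E[e^{\lambda F}\, W]$, where $W = \lambda\Phi_\beta(\lambda)(\cdots) + \lambda\Psi_\beta(\lambda)(\cdots)$ involves $V_\beta^+$, into a differential inequality for the log-Laplace transform that does not require $W$ bounded. The standard device (as in \cite[Theorem 2]{BLM_2003}) is the duality formula for entropy: for any $\theta>0$,
\[
\E[e^{\lambda F} W] \leq \frac{1}{\theta}\,\Big(\Ent(e^{\lambda F}) + \E[e^{\lambda F}]\,\log\E\big[e^{\theta W/\lambda'}\big]\Big)
\]
for an appropriate scaling — concretely, one applies $\Ent(Z)=\sup\{\E[ZU] : \E e^U\leq 1\}$ with $Z=e^{\lambda F}/\E e^{\lambda F}$ and $U = \theta W - \log\E e^{\theta W}$ after rescaling $W$ by the factors $\Phi_\beta(\lambda),\Psi_\beta(\lambda)$ so that $\theta W/(\lambda\Psi_\beta(\lambda))$ reduces to $\lambda V_\beta^+/\theta$. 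Rearranging, and using $\Phi_\beta(\lambda)\theta<1$ to absorb the self-referential $\Ent(e^{\lambda F})$ term on the right-hand side, produces
\[
\Ent(e^{\lambda F}) \leq \frac{\Psi_\beta(\lambda)\theta}{1-\Phi_\beta(\lambda)\theta}\,\E[e^{\lambda F}]\,\log\E\!\left[\exp\!\Big(\frac{\lambda V_\beta^+}{\theta}\Big)\right].
\]
Dividing by $\E[e^{\lambda F}]$ and recognising that $\Ent(e^{\lambda F})/\E[e^{\lambda F}] = \lambda K'(\lambda)/K(\lambda) - \log K(\lambda)$ where $K(\lambda)=\E e^{\lambda F}$ — wait, here the right-hand side has no $\lambda$-derivative, so in fact one gets directly $\log\E[\exp(\lambda(F-\E F))]\leq \lambda^{-1}\Ent(e^{\lambda F})/\E[e^{\lambda F}] \cdot(\text{something})$; more precisely, the Herbst-type inequality $\log\E e^{\lambda(F-\E F)} \leq \Ent(e^{\lambda F})/(\lambda \E e^{\lambda F})$ needs to be replaced by the sharper observation that $\log K(\lambda) - \lambda\E F \leq \int_0^\lambda \Ent(e^{sF})/(s^2 \E e^{sF})\,ds$ — but since our entropy bound already has the clean form above with no residual $s$-dependence beyond $\Psi_\beta,\Phi_\beta$ which are monotone, one integrates and obtains \eqref{EntIneq1}. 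The lower-tail inequality \eqref{EntIneq2} follows by replacing $F$ with $-F$ and $\beta$ with $-\beta$, noting $V_{-\beta}^+(-F) = V_\beta^-(F)$.

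\textbf{The integrability bootstrap.} For the last two assertions, assume first $\beta>0$ and $D_xF\leq\beta$ everywhere. Then $I$ can be taken to be all of $\X\times\OX$, so $D^{>\beta}_xF\equiv 0$, $V_\beta^+ = \int_\X (D_xF(\eta))^2 d\mu(x)$, and $D_xF\leq\beta$ gives $\psi(\lambda D_xF)\leq \lambda\Phi_\beta(\lambda)(D_xF)^2$. The strategy is a truncation/monotone-class argument: apply the analysis above to $F_N := (F\wedge N)\vee(-N)$, which is bounded (hence trivially integrable with finite exponential moments) and satisfies $|D_xF_N|\leq|D_xF|$ pointwise and $D_xF_N\leq\beta$, so $V_\beta^+(F_N)\leq V_\beta^+(F)$. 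Inequality \eqref{EntIneq1} applied to $F_N$ gives, for fixed $\theta$ with $\Phi_\beta(\lambda)\theta<1$,
\[
\E[e^{\lambda F_N}] \leq \exp\!\Big(\lambda\E F_N + \tfrac{\Psi_\beta(\lambda)\theta}{1-\Phi_\beta(\lambda)\theta}\log\E e^{\lambda V_\beta^+/\theta}\Big).
\]
Since $\E e^{\lambda V_\beta^+/\theta}<\infty$ by hypothesis and $\E F_N\leq \E F^+$ is controlled uniformly in $N$ once we know $\E F^+<\infty$ — which itself follows by applying the bound first with $F_N^+$ or by a separate elementary argument — the right-hand side is bounded uniformly in $N$; then Fatou / monotone convergence on $\{F\geq 0\}$ and dominated convergence on $\{F<0\}$ give $\E e^{\lambda F}<\infty$, and in particular $\E|F|<\infty$. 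The cases $\beta<0$ (with $D_xF\geq\beta$, using $\phi$) and $\beta=0$ (using $\psi(z)\leq z^2/2$ for $z\leq 0$ and $\phi(-z)\leq z^2/2$ for $z\geq 0$, i.e. the $V_0^+$ bound) are entirely analogous; the $V_\beta^-$ statement is again obtained by $F\mapsto -F$.

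\textbf{Main obstacle.} The delicate point is the decoupling step — getting from $\Ent(e^{\lambda F})\leq\E[e^{\lambda F}W]$ to the clean closed form with the factor $\Psi_\beta(\lambda)\theta/(1-\Phi_\beta(\lambda)\theta)$ — because one must split $W$ into its two pieces, apply the entropy duality to each with the \emph{same} auxiliary parameter $\theta$ in a way that reassembles into a single $V_\beta^+$, and verify that the self-referential entropy term is genuinely absorbable (this is exactly where $\Phi_\beta(\lambda)\theta<1$ enters and where the asymmetry between $\Phi$ and $\Psi$ must be respected). A secondary technical nuisance is justifying all manipulations — differentiating under the expectation, finiteness of $\Ent(e^{\lambda F})$ — before integrability of $F$ is known, which is precisely why the truncation argument in the last part is run separately rather than merged with the first part.
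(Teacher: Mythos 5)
Your overall architecture (choice of $I$, quadratic bounds on $\psi$ and $\phi$, decoupling via entropy duality, Herbst integration, truncation for the integrability part) is the same as the paper's, and your decoupling device is essentially the paper's use of \cite[Lemma 11]{M_2000}; but two of your central claims are wrong or unproven, and they sit exactly where the stated constants come from. First, the comparison $\Phi_\beta(\lambda)\leq\Psi_\beta(\lambda)$ is false for $\beta>0$: since $\psi(w)-\phi(w)=we^w-2e^w+w+2$ vanishes at $w=0$ and has derivative $\psi(w)\geq 0$, one has $\psi\geq\phi$ on $[0,\infty)$, hence $\Phi_\beta(\lambda)=\psi(\lambda\beta)/(\lambda\beta^2)\geq\phi(\lambda\beta)/(\lambda\beta^2)=\Psi_\beta(\lambda)$. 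So your absorption of the bracket into $\Psi_\beta(\lambda)V_\beta^+$ fails, and ``keeping the two coefficients separate'' cannot produce \eqref{EntIneq1} either, because the decoupling step needs a single coefficient in front of the single random variable $V_\beta^+$. The paper's mechanism is different: both terms coming from Proposition \ref{EntIneq} are bounded by the \emph{same} factor $u\Phi_\beta(u)$, for every $u\in(0,\lambda]$ (using $\phi(-u\beta)\leq\psi(u\beta)$ when $\beta>0$, resp. $\psi(u\beta)\leq\phi(-u\beta)$ when $\beta<0$), giving $\Ent(e^{uF})\leq u\Phi_\beta(u)\,\E(V_\beta^+e^{uF})$; the smaller quantity $\Psi_\beta(\lambda)$ appears only after the Herbst integration, through the identity $\int_0^\lambda\Phi_\beta(u)u^{-1}\,du=\phi(\lambda\beta)/(\lambda\beta^2)=\Psi_\beta(\lambda)$ for $\beta>0$ (and the corresponding upper bound for $\beta<0$), combined with convexity of $g(v)=\log\E e^{vV_\beta^+}$ and monotonicity of $\Phi_\beta$. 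Your write-up never carries out this computation: the displayed bound on $\Ent(e^{\lambda F})$ with the factor $\Psi_\beta(\lambda)\theta/(1-\Phi_\beta(\lambda)\theta)$ does not follow from your preceding lines, and at a fixed $\lambda$ the entropy inequality only controls $\lambda\E(Fe^{\lambda F})/\E e^{\lambda F}-\log\E e^{\lambda F}$, not $\log\E e^{\lambda(F-\E F)}$; one must integrate the differential inequality for $h(u)=u^{-1}\log\E e^{uF}$ over $u\in(0,\lambda]$, applying the decoupling at each $u$, and that is precisely the step you gloss over with ``one integrates and obtains \eqref{EntIneq1}''. As it stands, your argument yields at best the weaker inequality with $\Phi_\beta(\lambda)$ in the numerator.

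Second, the integrability bootstrap is circular as written: to extract a uniform bound on $\E e^{\lambda F_N}$ from \eqref{EntIneq1} applied to the truncations $F_N$ you need $\sup_N\E F_N<\infty$, i.e. essentially $\E F^+<\infty$, which you defer to ``a separate elementary argument'' although $\E|F|<\infty$ is part of the conclusion to be proved. The paper closes this loop by first establishing $\E|F|<\infty$ from $\E V_\beta^+<\infty$ (Lemma \ref{ExpLem}: the Poincar\'e inequality applied to the truncations gives $\sup_n\V F_n<\infty$, whence boundedness of $\{\E F_n\}$ and uniform integrability), and only then runs the truncation argument, choosing $\nu>1$ with $\Phi_\beta(\nu\lambda)\nu\theta<1$ and invoking de la Vall\'ee-Poussin to pass to the limit in $\E\exp(\lambda(F_n-\E F_n))$. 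Your observation that $V_\beta^+(F_N)\leq V_\beta^+(F)$ under conditions (i)--(iii) is correct, but without the preliminary integrability step (and with the incorrect $\Phi_\beta\leq\Psi_\beta$ comparison in the first part) the proposal does not prove the theorem as stated.
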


With the above results the methods for deriving deviation inequalities presented in \cite{BLM_2003} naturally carry over to the Poisson space. In the following we present some variations of these techniques that will be used for the applications later on.

In the case when $V_\beta^+$ and $V_\beta^-$ for $\beta\neq 0$ are almost surely bounded by a constant, the above entropy inequalities yield exponential tails for the random variable $F(\eta)$. If $V^+$ and $V^-$ are almost surely bounded (i.e. $\beta = 0$), we even obtain Gaussian tails.  Note that Wu's deviation inequality \cite[Proposition 3.1]{W_2000} is implied by {the following} more general results.

\begin{cor} \label{DevIneq1}
Assume that $F$ satisfies $V_\beta^+ \leq c$ almost surely. Then $F$ is integrable and the following statements hold:
\begin{enumerate}
{
\item If either condition (i) or (ii) of Theorem \ref{EntIneqs} is satisfied, then for all $r\geq 0$,
\begin{align*}
\P(F \geq\E F + r) &\leq \exp\left(-\left(\frac{c}{\beta^2} + \frac{r}{|\beta|}\right)\log\left(1+\frac{|\beta| r}{c}\right) + \frac{r}{|\beta|}\right)\\
&\leq \exp \left(-\frac{r}{2|\beta|} \log\left(1 + \frac{|\beta| r}{c}\right)\right).
\end{align*}
}
\item If $\beta = 0$, that is if $V^+\leq c$ holds almost surely, then for all $r\geq 0$,
\begin{align*}
\P(F\geq\E F + r) \leq \exp\left(-\frac{r^2}{2c}\right).
\end{align*}
\end{enumerate} 
\end{cor}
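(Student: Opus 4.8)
The plan is to deduce Corollary \ref{DevIneq1} from Theorem \ref{EntIneqs} via the Herbst argument, treating the two cases separately but in parallel. First I would handle integrability: since $V_\beta^+\leq c$ almost surely, the bound $\E\exp(\lambda V_\beta^+/\theta)\leq \exp(\lambda c/\theta)<\infty$ is trivially finite for every $\lambda,\theta>0$, and the assumed sign condition on $DF$ (condition (i), (ii), or (iii)) places us exactly in the hypotheses of the last part of Theorem \ref{EntIneqs}; hence $\E|F|<\infty$ and $\E\exp(\lambda F)<\infty$ for all $\lambda>0$. This justifies applying the first inequality \eqref{EntIneq1} for every $\lambda>0$.

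Next, fix $\lambda>0$. Using $V_\beta^+\leq c$ a.s.\ we get $\log\E[\exp(\lambda V_\beta^+/\theta)]\leq \lambda c/\theta$, so \eqref{EntIneq1} becomes
\[
\log\E[\exp(\lambda(F-\E F))] \leq \frac{\Psi_\beta(\lambda)\theta}{1-\Phi_\beta(\lambda)\theta}\cdot\frac{\lambda c}{\theta} = \frac{\lambda c\,\Psi_\beta(\lambda)}{1-\Phi_\beta(\lambda)\theta},
\]
valid for any $\theta>0$ with $\Phi_\beta(\lambda)\theta<1$. Since the right-hand side is increasing in $\theta$, I would let $\theta\downarrow 0$, obtaining the clean bound $\log\E[\exp(\lambda(F-\E F))]\leq \lambda c\,\Psi_\beta(\lambda)$. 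For case (ii), where $\beta=0$, one has $\Psi_0(\lambda)=\lambda/2$, giving $\log\E[\exp(\lambda(F-\E F))]\leq c\lambda^2/2$; then Markov's inequality yields $\P(F\geq \E F+r)\leq \exp(c\lambda^2/2-\lambda r)$, and optimising over $\lambda>0$ (take $\lambda=r/c$) produces $\exp(-r^2/(2c))$, which is statement (ii).

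For case (i), where $\beta\neq 0$ and the sign condition forces $\Phi_\beta=\Psi_\beta$, write out $\Psi_\beta(\lambda)$ explicitly. When $\beta>0$ and $DF\leq\beta$, $\Psi_\beta(\lambda)=\phi(\lambda\beta)/(\lambda\beta^2)=(e^{\lambda\beta}-\lambda\beta-1)/(\lambda\beta^2)$; when $\beta<0$ and $DF\geq\beta$, $\Psi_\beta(\lambda)=\phi(-\lambda\beta)/(\lambda\beta^2)=(e^{-\lambda\beta}+\lambda\beta-1)/(\lambda\beta^2)$. In either case $\lambda c\,\Psi_\beta(\lambda)=\frac{c}{\beta^2}(e^{|\beta|\lambda}-|\beta|\lambda-1)$, so $\log\E[\exp(\lambda(F-\E F))]\leq \frac{c}{\beta^2}(e^{|\beta|\lambda}-|\beta|\lambda-1)$. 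By Markov, $\P(F\geq\E F+r)\leq\exp\big(\frac{c}{\beta^2}(e^{|\beta|\lambda}-|\beta|\lambda-1)-\lambda r\big)$; minimising the exponent over $\lambda>0$ gives the stationarity condition $e^{|\beta|\lambda}=1+\frac{|\beta| r}{c}$, i.e.\ $\lambda=\frac{1}{|\beta|}\log(1+\frac{|\beta| r}{c})$. Substituting back yields precisely the first displayed bound in statement (i), and the second (weaker, more legible) bound follows from the elementary numerical inequality $(1+u)\log(1+u)-u\geq \tfrac12 u\log(1+u)$ for $u\geq 0$ applied with $u=|\beta| r/c$.

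The only genuinely non-routine point is the passage $\theta\downarrow 0$ in \eqref{EntIneq1}: one must check that $\Phi_\beta(\lambda)\theta<1$ is eventually satisfied (clear, since $\Phi_\beta(\lambda)$ is a fixed finite constant for fixed $\lambda$) and that the limit of the bound is the claimed $\lambda c\,\Psi_\beta(\lambda)$ — both are immediate. Everything else reduces to the two optimisation computations and the final convexity inequality for the logarithm, none of which presents any real obstacle; the substantive work has already been done in Theorem \ref{EntIneqs}.
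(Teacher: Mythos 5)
Your proof is correct and follows essentially the same route as the paper: apply inequality \eqref{EntIneq1} with $\log\E[\exp(\lambda V_\beta^+/\theta)]\leq \lambda c/\theta$, let $\theta\downarrow 0$ to obtain $\log\E[\exp(\lambda(F-\E F))]\leq \lambda c\,\Psi_\beta(\lambda)$, and then conclude by Markov's inequality and optimisation in $\lambda$ (the paper additionally gets integrability from Lemma \ref{ExpLem}, while you invoke the last part of Theorem \ref{EntIneqs}, which also correctly supplies the finiteness of $\E e^{\lambda F}$ needed to apply \eqref{EntIneq1}). One cosmetic slip: your parenthetical claim that the sign condition forces $\Phi_\beta=\Psi_\beta$ is false for $\beta>0$ (the two coincide only for $\beta<0$, by definition), but since you never use this identity and instead compute $\Psi_\beta(\lambda)$ directly, the argument is unaffected.
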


We continue with the corresponding version for the lower tail. This corollary is obtained in the same way as the above one where inequality (\ref{EntIneq2}) is used instead of (\ref{EntIneq1}). The proof is therefore omitted.

\begin{cor} \label{DevIneq12}
Assume that $F$ satisfies $V_\beta^- \leq c$ almost surely. Then $F$ is integrable and the following statements hold:
\begin{enumerate}
{
\item If either condition (i) or (ii) of Theorem \ref{EntIneqs} is satisfied, then for all $r\geq 0$,
\begin{align*}
\P(F \leq\E F - r) &\leq \exp\left(-\left(\frac{c}{\beta^2} + \frac{r}{|\beta|}\right)\log\left(1+\frac{|\beta| r}{c}\right) + \frac{r}{|\beta|}\right)\\
&\leq \exp \left(-\frac{r}{2|\beta|} \log\left(1 + \frac{|\beta| r}{c}\right)\right).
\end{align*}
}
\item If $\beta = 0$, that is if $V^-\leq c$ holds almost surely, then for all $r\geq 0$,
\begin{align*}
\P(F\leq\E F - r) \leq \exp\left(-\frac{r^2}{2c}\right).
\end{align*}
\end{enumerate} 
\end{cor}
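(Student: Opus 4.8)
The plan is to derive Corollary \ref{DevIneq12} from the entropy inequality \eqref{EntIneq2} of Theorem \ref{EntIneqs} via the Herbst argument, in exact parallel with the (omitted) proof of Corollary \ref{DevIneq1}, which itself follows the scheme of \cite[Proposition 3.1]{W_2000}. First I would settle integrability: since $V_\beta^-\le c$ almost surely, $\E\exp(\lambda V_\beta^-/\theta)\le e^{\lambda c/\theta}<\infty$ for all $\lambda,\theta>0$; moreover, under hypothesis (i) of the corollary we are in case (i) or (ii) of Theorem \ref{EntIneqs}, while under hypothesis (ii) we have $\beta=0$, i.e.\ case (iii) of that theorem, so in either situation the last part of Theorem \ref{EntIneqs} applies and yields $\E|F|<\infty$ and $\E\exp(-\lambda F)<\infty$ for every $\lambda>0$. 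Hence $\E F$ is well-defined and \eqref{EntIneq2} is available for all $\lambda>0$.

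Next, fix $\lambda>0$ and feed the bound $\log\E[\exp(\lambda V_\beta^-/\theta)]\le\lambda c/\theta$ (which follows from $V_\beta^-\le c$) into the right-hand side of \eqref{EntIneq2}: for every $\theta>0$ with $\Phi_{-\beta}(\lambda)\theta<1$,
\[
\log\E\bigl[\exp(-\lambda(F-\E F))\bigr]\ \le\ \frac{\Psi_{-\beta}(\lambda)\,\theta}{1-\Phi_{-\beta}(\lambda)\theta}\cdot\frac{\lambda c}{\theta}\ =\ \frac{\lambda c\,\Psi_{-\beta}(\lambda)}{1-\Phi_{-\beta}(\lambda)\theta}.
\]
Letting $\theta\downarrow0$, the right-hand side decreases to $\lambda c\,\Psi_{-\beta}(\lambda)$ (note $\Phi_{-\beta}(\lambda)>0$) while the left-hand side is independent of $\theta$, so $\log\E[\exp(-\lambda(F-\E F))]\le\lambda c\,\Psi_{-\beta}(\lambda)$. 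Reading off $\Psi$ from its three-branch definition according to the sign of $\beta$, one checks that $\lambda\Psi_{-\beta}(\lambda)=\phi(\lambda|\beta|)/\beta^2$ when $\beta\ne0$ and $\lambda\Psi_{-\beta}(\lambda)=\lambda^2/2$ when $\beta=0$, so the last bound equals $\tfrac{c}{\beta^2}\phi(\lambda|\beta|)$ in the first case and $\tfrac{c\lambda^2}{2}$ in the second.

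It then remains to apply the Chernoff bound $\P(F\le\E F-r)\le e^{-\lambda r}\,\E[\exp(-\lambda(F-\E F))]$ for $r\ge0$ and to optimise over $\lambda>0$. For $\beta\ne0$ the substitution $t=\lambda|\beta|$ turns the exponent $-\lambda r+\tfrac{c}{\beta^2}\phi(\lambda|\beta|)$ into $-\tfrac{r}{|\beta|}t+\tfrac{c}{\beta^2}(e^t-t-1)$, which is stationary at $e^t=1+|\beta|r/c$, i.e.\ $t=\log(1+|\beta|r/c)$; substituting back reproduces exactly the exponent $-\bigl(\tfrac{c}{\beta^2}+\tfrac{r}{|\beta|}\bigr)\log\bigl(1+\tfrac{|\beta|r}{c}\bigr)+\tfrac{r}{|\beta|}$ of the first displayed estimate in part (i). The second, cleaner estimate in part (i) follows from the elementary inequality $(2+s)\log(1+s)\ge 2s$, valid for $s\ge0$, applied with $s=|\beta|r/c$; this holds because the difference vanishes at $s=0$ and has derivative $\log(1+s)-\tfrac{s}{1+s}\ge0$. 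For $\beta=0$ the exponent $-\lambda r+\tfrac{c\lambda^2}{2}$ is minimised at $\lambda=r/c$, which gives $\exp(-r^2/(2c))$ and hence part (ii).

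I do not anticipate a genuine obstacle: this is the standard Herbst optimisation, symmetric to the upper-tail Corollary \ref{DevIneq1}. The only points requiring some care are (a) justifying the limit $\theta\downarrow0$ and the value of the resulting infimum; (b) correctly extracting $\Psi_{-\beta}(\lambda)$ from its piecewise definition for each sign of $\beta$, in particular verifying that the branches $\beta>0$ and $\beta<0$ both collapse to $\phi(\lambda|\beta|)/(\lambda\beta^2)$; and (c) the one-variable inequality linking the sharp and the clean bound in part (i). None of these is a serious difficulty.
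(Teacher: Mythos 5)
Your proposal is correct and follows essentially the same route as the paper: the paper omits this proof, noting it is Corollary \ref{DevIneq1} with \eqref{EntIneq2} in place of \eqref{EntIneq1}, i.e.\ integrability plus exponential moments (you via the last part of Theorem \ref{EntIneqs}, the paper via Lemma \ref{ExpLem}), then bounding $\log\E\exp(\lambda V_\beta^-/\theta)$ by $\lambda c/\theta$, letting $\theta\downarrow 0$ to get $\log\E\exp(-\lambda(F-\E F))\le \lambda c\,\Psi_{-\beta}(\lambda)$, and finishing with Markov's inequality and optimisation in $\lambda$. Your explicit $\lambda$-optimisation, the identification $\lambda\Psi_{-\beta}(\lambda)=\phi(\lambda|\beta|)/\beta^2$, and the elementary inequality $(2+s)\log(1+s)\ge 2s$ for the weaker bound are all correct and merely fill in details the paper leaves implicit.
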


The following result is useful to obtain deviation inequalities under less restrictive boundedness conditions on $V^+$.

\begin{cor} \label{cor2}
Assume that $F\geq 0$ and that there is a {random variable $G\geq 0$} and an $\alpha\in[0,2)$ such that almost surely
\begin{align*}
V^+ \leq G F^\alpha.
\end{align*}
Let $\theta >0$ and $\lambda\in (0,2/\theta)$ be such that $\E\exp(\lambda G/\theta) < \infty$. Then $\E F^{1-\alpha/2} <\infty$ and
\begin{align*}
\log \E(\exp(\lambda(F^{1-\alpha/2}-\E F^{1-\alpha/2}))) &\leq \frac{\lambda \theta}{2-\lambda\theta} \log \E \left( \exp\left(\frac{\lambda G}{\theta}\right)\right).
\end{align*}
\end{cor}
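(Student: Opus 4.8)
The plan is to deduce Corollary~\ref{cor2} from the case $\beta=0$ of Theorem~\ref{EntIneqs}, applied not to $F$ itself but to the auxiliary Poisson functional
\[
g:=F^{1-\alpha/2}.
\]
Writing $\gamma:=1-\alpha/2\in(0,1]$, everything rests on the pointwise domination $V_0^+(g)\le F(\eta)^{-\alpha}\,V_0^+(F)$: combined with the hypothesis $V^+=V_0^+(F)\le GF^\alpha$, this immediately forces $V_0^+(g)\le G$ almost surely, and the asserted bound is then read off from \eqref{EntIneq1}.

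The pointwise domination comes from the elementary scalar inequality: for $0\le a\le b$ and $\gamma\in(0,1]$,
\[
\left(b^\gamma-a^\gamma\right)^2\le b^{2\gamma-2}(b-a)^2=b^{-\alpha}(b-a)^2 ,
\]
which is equivalent to $b^{1-\gamma}(b^\gamma-a^\gamma)\le b-a$, i.e.\ to $a\le a^\gamma b^{1-\gamma}$, and this holds because $a\le b$ (the case $a=0$ being trivial). Since $t\mapsto t^\gamma$ is non-decreasing, $D_xg$ and $D_xF$ always carry the same sign, so the indicator sets entering $D^{\le 0}_x g(\eta)$ and $D^{>0}_x g(\eta-\delta_x)$ coincide with those entering $D^{\le 0}_x F(\eta)$ and $D^{>0}_x F(\eta-\delta_x)$. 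Applying the scalar inequality with $(b,a)=(F(\eta),F(\eta+\delta_x))$ in the first integral defining $V_0^+(g)$ and with $(b,a)=(F(\eta),F(\eta-\delta_x))$ in the second, and pulling the constant $F(\eta)^{-\alpha}$ out of both integrals, yields
\[
V_0^+(g)\le F(\eta)^{-\alpha}\!\left(\int_{\X}\!\left(D^{\le 0}_x F(\eta)\right)^2 d\mu(x)+\int_{\X}\!\left(D^{>0}_x F(\eta-\delta_x)\right)^2 d\eta(x)\right)=F(\eta)^{-\alpha}V_0^+(F)\le G
\]
almost surely, with the obvious reading on $\{F(\eta)=0\}$, where one checks directly that $V_0^+(g)=0\le G$.

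It remains to feed this into Theorem~\ref{EntIneqs}. Since $\lambda\in(0,2/\theta)$ and $\E\exp(\lambda G/\theta)<\infty$, the bound $V_0^+(g)\le G$ gives $\E\exp(\lambda V_0^+(g)/\theta)<\infty$; the last part of Theorem~\ref{EntIneqs} (condition~(iii), $\beta=0$, applied to $g$) then shows that $g=F^{1-\alpha/2}$ is integrable and that $\E\exp(\lambda g)<\infty$. Now \eqref{EntIneq1} with $\beta=0$ applies with the given $\theta$ — indeed $\Phi_0(\lambda)=\Psi_0(\lambda)=\lambda/2$, and the admissibility condition $\Phi_0(\lambda)\theta<1$ is precisely $\lambda<2/\theta$ — and gives
\[
\log\E\exp\!\big(\lambda(g-\E g)\big)\le\frac{(\lambda/2)\theta}{1-(\lambda/2)\theta}\,\log\E\exp\!\big(\lambda V_0^+(g)/\theta\big)=\frac{\lambda\theta}{2-\lambda\theta}\,\log\E\exp\!\big(\lambda V_0^+(g)/\theta\big).
\]
Bounding $V_0^+(g)\le G$ inside the last expectation (and using $\lambda\theta/(2-\lambda\theta)>0$) produces exactly the asserted inequality, since $g-\E g=F^{1-\alpha/2}-\E F^{1-\alpha/2}$.

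The only step that is not a direct appeal to results already in hand is the pointwise domination $V_0^+(g)\le F(\eta)^{-\alpha}V_0^+(F)$: one has to spot the right scalar inequality $b^{1-\gamma}(b^\gamma-a^\gamma)\le b-a$ and verify that the indicator regions for $g$ and $F$ genuinely match up (including the degenerate case $F(\eta)=0$). Everything else is mechanical, so this is where I expect the main — though modest — effort to lie.
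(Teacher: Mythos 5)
Your proposal is correct and follows essentially the same route as the paper: pass to $g=F^{1-\alpha/2}$, prove the pointwise bound $V^+(g)\le F^{-\alpha}V^+\le G$ via the same scalar inequality $(b^\gamma-a^\gamma)^2\le b^{-\alpha}(b-a)^2$ for $0\le a\le b$ (with the $\{F=0\}$ case treated separately, as in the paper), and then invoke Theorem \ref{EntIneqs} with $\beta=0$, whose last part also supplies the integrability of $F^{1-\alpha/2}$. The only cosmetic difference is that you handle $\alpha=0$ uniformly within the same argument, whereas the paper dispatches it directly to Theorem \ref{EntIneqs}.
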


In the case when the random variable $G$ in the above corollary is just a constant, we obtain the following deviation inequality for the upper tail.

\begin{cor} \label{DevIneq2}
{Assume that $F\geq 0$} and that for some $\alpha\in[0,2)$ and $c>0$ we have almost surely
\begin{align*}
V^+ \leq c F^\alpha.
\end{align*}
Then $F$ is integrable and for all $r\geq 0$,
\begin{align*}
\P(F\geq\E F + r) \leq \exp\left(-\frac{((r+\E F)^{1-\alpha/2} - (\E F)^{1-\alpha/2})^2}{2c}\right).
\end{align*}
\end{cor}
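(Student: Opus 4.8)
The plan is to reduce the statement to Corollary~\ref{cor2} applied with the constant (hence trivially exponentially integrable) random variable $G\equiv c$, and then to run a standard Chernoff bound on the transformed functional $Y:=F^{1-\alpha/2}$. Since $\alpha\in[0,2)$, the exponent $p:=1-\alpha/2$ lies in $(0,1]$, so $t\mapsto t^{p}$ is an increasing (and concave) bijection of $[0,\infty)$; these two elementary facts are exactly what will allow us to translate tails of $F$ into tails of $Y$ and back.

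\emph{Step 1: a sub-Gaussian bound for $Y$.} Apply Corollary~\ref{cor2} with $G=c$. For every $\lambda>0$ and every $\theta\in(0,2/\lambda)$ the requirement $\E\exp(\lambda G/\theta)=\exp(\lambda c/\theta)<\infty$ holds, so the corollary gives $\E Y=\E F^{1-\alpha/2}<\infty$ together with
\[
\log\E\!\left[\exp(\lambda(Y-\E Y))\right]\ \leq\ \frac{\lambda\theta}{2-\lambda\theta}\cdot\frac{\lambda c}{\theta}\ =\ \frac{c\lambda^{2}}{2-\lambda\theta}.
\]
The left-hand side does not depend on $\theta$, so letting $\theta\downarrow0$ yields $\log\E[\exp(\lambda(Y-\E Y))]\leq c\lambda^{2}/2$ for every $\lambda>0$. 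In particular $Y$ has a finite exponential moment, hence finite moments of every order; since $F=Y^{1/p}$ with $1/p=2/(2-\alpha)$ finite and $F\geq 0$, this already establishes that $F$ is integrable.

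\emph{Step 2: Chernoff and translation back to $F$.} For $s\geq0$ and $\lambda>0$, Markov's inequality gives $\P(Y\geq\E Y+s)\leq e^{-\lambda s}\,\E[\exp(\lambda(Y-\E Y))]\leq\exp(-\lambda s+c\lambda^{2}/2)$, and the choice $\lambda=s/c$ produces $\P(Y\geq\E Y+s)\leq\exp(-s^{2}/(2c))$. Now fix $r\geq0$. Because $t\mapsto t^{p}$ is increasing, $\{F\geq\E F+r\}=\{Y\geq(\E F+r)^{p}\}$; because it is concave, Jensen's inequality gives $\E Y=\E F^{p}\leq(\E F)^{p}$. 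Setting $s:=(\E F+r)^{p}-(\E F)^{p}\geq0$, we therefore get $(\E F+r)^{p}=(\E F)^{p}+s\geq\E Y+s$, so $\{F\geq\E F+r\}\subseteq\{Y\geq\E Y+s\}$, and hence
\[
\P(F\geq\E F+r)\ \leq\ \P(Y\geq\E Y+s)\ \leq\ \exp\!\left(-\frac{s^{2}}{2c}\right)\ =\ \exp\!\left(-\frac{\big((\E F+r)^{1-\alpha/2}-(\E F)^{1-\alpha/2}\big)^{2}}{2c}\right),
\]
which is exactly the asserted inequality.

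The argument is essentially routine bookkeeping once Corollary~\ref{cor2} is available; the only points that need a moment's care are (a) taking the limit $\theta\downarrow0$ to extract the sharp Gaussian constant $c\lambda^{2}/2$ from the $\theta$-dependent bound, and (b) getting the \emph{directions} of the two elementary inequalities in Step~2 right — one needs monotonicity of $t\mapsto t^{p}$ to rewrite the event and concavity to bound $\E Y$ from above by $(\E F)^{p}$, and both hold precisely because $p=1-\alpha/2\in(0,1]$, i.e. because $\alpha\in[0,2)$. I do not expect any genuine obstacle beyond these.
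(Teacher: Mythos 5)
Your proof is correct and follows essentially the same route as the paper: both arguments reduce to the power-transformed functional $F^{1-\alpha/2}$, establish for it a Gaussian-type bound with variance proxy $c$, and translate back to $F$ using monotonicity of $t\mapsto t^{1-\alpha/2}$ together with Jensen's inequality $\E F^{1-\alpha/2}\leq (\E F)^{1-\alpha/2}$. The only cosmetic difference is that you invoke Corollary \ref{cor2} as a black box with $G\equiv c$ and then optimize by hand (letting $\theta\downarrow 0$ and running Chernoff), whereas the paper re-derives the pointwise bound $V^+(F^{1-\alpha/2})\leq c$ from the computation in the proof of Corollary \ref{cor2} and applies Corollary \ref{DevIneq1}(ii) directly to $F^{1-\alpha/2}$, with the integrability of $F$ obtained in both cases by equivalent routine arguments.
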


The next result is a variation of Corollary \ref{cor2} for Poisson functionals that are not necessarily non-negative. This is the Poisson space analogue of \cite[Theorem 5]{BLM_2003}. 

\begin{thm}\label{arbSignThm}
Assume that the Poisson functional $F$ is integrable and that for some $a>0$ and $b \geq 0$ we have almost surely
\begin{align*}
V^+ \leq a F + b.
\end{align*}
Then for any $\lambda\in (0,2/a)$ we have $\E(e^{\lambda F})<\infty$ and
\begin{align*}
\log \E (\exp( \lambda(F-\E F))) \leq \frac{\lambda^2}{2-a\lambda} (a\E F+b).
\end{align*}
Moreover, for any $r\geq 0$,
\begin{align*}
\P(F\geq \E F + r) \leq \exp\left(-\frac{r^2}{2a\E F + 2b + ar/3}\right).
\end{align*}
\end{thm}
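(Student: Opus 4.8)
The proof will mimic that of \cite[Theorem 5]{BLM_2003}, with Proposition \ref{EntIneq} playing the role of the modified logarithmic Sobolev inequality (the Mecke formula being already built into it). Throughout I fix $\lambda>0$ with $\E e^{\lambda F}<\infty$ and set $K(\lambda):=\E e^{\lambda F}$ and $B:=a\E F+b$; observe at once that $0\le V^+\le aF+b$ forces $F\ge -b/a$ (so $F$ is bounded below) and $0\le\E V^+\le B$, i.e.\ $B\ge 0$. \emph{Step 1 (log-Sobolev inequality in terms of $V^+$).} Apply Proposition \ref{EntIneq} with $I=\{(x,\xi)\in\X\times\OX:D_xF(\xi)\le 0\}$. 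As $\lambda>0$, one has $\lambda D_x^I F(\eta)\le 0$ and $-\lambda D_x^{I^c}F(\eta-\delta_x)=-\lambda D_x^{>0}F(\eta-\delta_x)\le 0$, so the elementary inequalities $\psi(z)\le z^2/2$ and $\phi(z)\le z^2/2$, both valid for $z\le 0$, reduce the right-hand side of Proposition \ref{EntIneq} to
\[
\Ent\big(e^{\lambda F}\big)\ \le\ \frac{\lambda^2}{2}\,\E\!\left[e^{\lambda F}\,V^+\right]
\]
(the same inequality that underlies the $\beta=0$ case of Theorem \ref{EntIneqs}).

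\emph{Step 2 (differential inequality and its solution).} Substituting $V^+\le aF+b$ into Step 1 and using $\E[Fe^{\lambda F}]=K'(\lambda)$ gives $\Ent(e^{\lambda F})\le\frac{a\lambda^2}{2}K'(\lambda)+\frac{b\lambda^2}{2}K(\lambda)$, while $\Ent(e^{\lambda F})=\lambda K'(\lambda)-K(\lambda)\log K(\lambda)$. Writing $G(\lambda):=\log\E e^{\lambda(F-\E F)}=\log K(\lambda)-\lambda\E F$ (so $G(0)=0$ and $G(\lambda)/\lambda\to 0$ as $\lambda\downarrow 0$, since $F$ is integrable), one divides by $K(\lambda)$ and rearranges to obtain
\[
\lambda\Big(1-\frac{a\lambda}{2}\Big)G'(\lambda)\ \le\ G(\lambda)+\frac{B\lambda^2}{2},\qquad 0<\lambda<\frac{2}{a}.
\]
For $\lambda\in(0,2/a)$ the factor $1-a\lambda/2$ is positive, and the inequality is equivalent to $\frac{d}{d\lambda}\big[(1-\tfrac{a\lambda}{2})G(\lambda)/\lambda\big]\le B/2$; integrating from $0$ and using $G(\lambda)/\lambda\to 0$ yields $(1-\tfrac{a\lambda}{2})\,G(\lambda)/\lambda\le B\lambda/2$, that is the claimed bound $G(\lambda)\le\frac{B\lambda^2}{2-a\lambda}$.

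\emph{Step 3 (tail bound).} By Markov's inequality, for every $\lambda\in(0,2/a)$,
\[
\P(F\ge\E F+r)\ \le\ e^{-\lambda r}\,e^{G(\lambda)}\ \le\ \exp\!\Big(\frac{B\lambda^2}{2-a\lambda}-\lambda r\Big),
\]
and the announced sub-exponential estimate follows by optimising over $\lambda$ in the by now standard Chernoff/Bennett fashion; pinning down the exact denominator $2a\E F+2b+ar/3$ amounts to the customary careful choice of $\lambda$ as a function of $r$.

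\emph{Main obstacle.} The genuinely delicate point is the a priori finiteness $\E e^{\lambda F}<\infty$ for \emph{every} $\lambda\in(0,2/a)$, which is needed before the differential inequality can be written on that whole range. I would argue by a bootstrap: Steps 1--2 are valid on the maximal interval $(0,\Lambda^\ast)$ on which $K$ is finite and give $G(\lambda)\le\frac{B\lambda^2}{2-a\lambda}$ there, so that $K$ remains bounded as $\lambda\uparrow\min(\Lambda^\ast,2/a)$ and $\E e^{\Lambda^\ast F}<\infty$ when $\Lambda^\ast<2/a$. One then invokes the last assertion of Theorem \ref{EntIneqs} with $\beta=0$, choosing the auxiliary parameter $\theta$ in its admissible range so that $\E\exp(\lambda V^+/\theta)\le e^{\lambda b/\theta}\E\exp(\lambda aF/\theta)<\infty$, to push finiteness strictly beyond $\Lambda^\ast$ --- contradicting maximality unless $\Lambda^\ast\ge 2/a$. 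Getting this bootstrap started (i.e.\ $\Lambda^\ast>0$) is handled by a preliminary truncation, using the monotonicity $V^+(F\wedge M)\le V^+(F)$; this, rather than the entropy-method calculus of Steps 1--3, is where the real care is needed.
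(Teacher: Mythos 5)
Your Steps 1--2 are, up to bookkeeping, exactly the paper's argument: the choice $I=\{D_xF\le 0\}$ in Proposition \ref{EntIneq} together with $\psi(z),\phi(z)\le z^2/2$ for $z\le 0$ gives $\Ent(e^{uF})\le\tfrac{u^2}{2}\E[V^+e^{uF}]$, and your integration of $\frac{d}{d\lambda}\bigl[(1-\tfrac{a\lambda}{2})G(\lambda)/\lambda\bigr]\le B/2$ is just a repackaging of the paper's $h'(u)\le\tfrac12\bigl(a(\log\E e^{uF})'+b\bigr)$. The genuine divergence, and the genuine gap, is in how you reach $\E e^{\lambda F}<\infty$ for unbounded $F$. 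Your bootstrap on the maximal interval $(0,\Lambda^\ast)$ is sound once started (with $\theta\uparrow 2/\lambda$ in Theorem \ref{EntIneqs} it extends finiteness from $\Lambda^\ast$ to any $\lambda<\sqrt{2\Lambda^\ast/a}$, and iterating reaches $2/a$), but the device you offer to start it, ``a preliminary truncation, using the monotonicity $V^+(F\wedge M)\le V^+(F)$,'' does not deliver $\Lambda^\ast>0$. Monotonicity only gives $V^+(F_n)\le V^+\le aF+b$, with the \emph{untruncated} $F$ on the right; on $\{F>n\}$ this is strictly weaker than the self-bounding property $V^+(F_n)\le aF_n+b$ that you must feed into the bounded-case estimate (or into Theorem \ref{EntIneqs}) for $F_n$, and the mixed bound cannot be closed into a differential inequality for $\E e^{uF_n}$ alone. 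Proving that the truncations inherit the hypothesis is precisely the content of the paper's Lemma \ref{truncLem} (via the pointwise comparisons $A(n-m)^2\le(An-m)^2$ for $A\ge1$, $|m|\le n$, applied to both halves of $V^+$); with that lemma the paper applies the bounded case to every $F_n$, gets a uniform bound on $\E\exp(\nu\lambda(F_n-\E F_n))$ for some $\nu>1$ with $\nu\lambda<2/a$, and concludes by de la Vall\'ee-Poussin, with no bootstrap at all. If you want to keep your bootstrap, you must replace the monotonicity remark by a real initialization, e.g.\ apply Corollary \ref{DevIneq2} with $\alpha=1$ to $F+b/a\ge0$ (note $V^+(F+b/a)=V^+\le a(F+b/a)$), which yields a tail of order $\exp(-r/(2a)+O(\sqrt r))$ and hence $\E e^{\epsilon F}<\infty$ for all $\epsilon<1/(2a)$; as written, the crucial step is missing.

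A secondary point: Step 3 is not ``customary optimisation.'' For every $\lambda\in(0,2/a)$ one has $\lambda r-\frac{B\lambda^2}{2-a\lambda}<\frac{2r}{a}$ (with $B=a\E F+b$), so the best exponent Chernoff can extract from the stated MGF bound is below $2r/a$, whereas the claimed exponent $\frac{r^2}{2B+ar/3}$ exceeds $2r/a$ as soon as $r>12B/a$; a careful choice of $\lambda$ (e.g.\ $\lambda=r/(B+ar/2)$) only yields the denominator $2a\E F+2b+ar$. The paper does not optimise either: it invokes \cite[Lemma 11]{BLM_2003} for this conversion. So you should either cite that lemma explicitly or weaken the final denominator accordingly; asserting that the $ar/3$ term comes from tuning $\lambda$ is not correct.
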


{We continue with a result that applies whenever $F$ is non-decreasing and $V^-$ is non-decreasing and integrable. In this case, the random variable $F$ has a Gaussian lower tail:}

\begin{thm} \label{LowerDevIneq}
Assume that the Poisson functional $F$ satisfies
\begin{align*}
{ D_x F(\xi), \ D_x V^-(\xi) \geq 0 \ \ \text{for all} \ \ (x,\xi)\in\X\times\OX} \ \ \text{and} \ \ \E V^-(\eta) < \infty.
\end{align*}
Then $F = F(\eta)$ is integrable and for all $r\geq 0$ we have
\begin{align*}
\P(F\leq \E F -r) \leq \exp\left(-\frac{r^2}{2 \E V^-}\right).
\end{align*}
\end{thm}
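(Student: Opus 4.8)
The plan is to run a Herbst-type argument on the (centered) lower-tail moment generating function, using inequality \eqref{EntIneq2} from Theorem \ref{EntIneqs} with $\beta = 0$ as the input log-Sobolev estimate, and exploiting the monotonicity of $V^-$ to turn the resulting entropy bound into an autonomous differential inequality. First I would fix $\lambda > 0$ and, assuming for the moment that $\E \exp(-\lambda F) < \infty$, apply \eqref{EntIneq2} with $\beta = 0$ to get
\[
\log \E[\exp(-\lambda(F - \E F))] \leq \frac{(\lambda\theta/2)}{1 - \lambda\theta/2}\, \log \E\!\left[\exp\!\left(\frac{\lambda V^-}{\theta}\right)\right]
\]
for every $\theta$ with $\lambda\theta/2 < 1$. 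The key point is that this is not yet a closed inequality in $K(\lambda) := \E[\exp(-\lambda(F-\E F))]$ because of the extra term $\E \exp(\lambda V^-/\theta)$; I would handle it by invoking the assumption that $D_x V^- \geq 0$ for all $(x,\xi)$, i.e.\ $V^-$ is non-decreasing, \emph{together with} $F$ non-decreasing. Since $-F$ and $V^-$ are respectively non-increasing and non-decreasing functionals — more precisely one needs a correlation-type / FKG-type inequality on Poisson space, or alternatively a direct argument using $D_x(-F) \leq 0 \leq D_x V^-$ — one should be able to bound $\E[\exp(-\lambda(F-\E F))\exp(\lambda V^-/\theta)]$ or, in the cleaner route, decouple the two factors.

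Actually the more robust route, which mirrors the product-space proof of the corresponding statement in \cite{BLM_2003}, is the following: apply Proposition \ref{EntIneq} (equivalently the $\beta=0$ specialization used to prove \eqref{EntIneq2}) not to $e^{-\lambda F}$ directly with a crude bound on the right-hand side, but track the right-hand side as $\E[e^{-\lambda F}\, g(\lambda V^-)]$ for the appropriate convex function $g$ coming from $\psi$/$\phi$; then use that, because $V^-$ is monotone in the same direction that makes $D_x(-F)$ have a sign, the integrand inside the Mecke/Wu expression can be dominated pointwise. Concretely, in the $\beta = 0$ case the relevant term on the right of Proposition \ref{EntIneq} is (after the standard estimate $\psi(z), \phi(z) \leq z^2/2$ for $z \leq 0$, applied to $\lambda D^-_x(-F) \leq 0$ type quantities) at most $\tfrac{\lambda^2}{2}\, \E[e^{-\lambda F} V^-]$, and one then writes $\E[e^{-\lambda F}V^-] \leq \E[e^{-\lambda F}]\,\E[V^-]$ using the fact that $e^{-\lambda F}$ is a non-increasing functional and $V^-$ is a non-decreasing functional of $\eta$ — a Harris/FKG inequality for Poisson processes (which follows, e.g., from the disjoint-support coupling of $\eta$ with a finer discretization, or directly). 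This yields the closed differential inequality
\[
\Ent(e^{-\lambda F}) \leq \frac{\lambda^2}{2}\, \E V^- \cdot \E[e^{-\lambda F}].
\]

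From there the argument is the classical Herbst finish. Writing $K(\lambda) = \E[e^{-\lambda F}]$ and using $\Ent(e^{-\lambda F}) = -\lambda K'(\lambda) - K(\lambda)\log K(\lambda)$ (here I differentiate in the variable $-\lambda$; care with signs is needed since we want the \emph{lower} tail of $F$, i.e.\ the upper tail of $-F$), the inequality becomes, after dividing by $\lambda^2 K(\lambda)$, a bound of the form $\frac{d}{d\lambda}\big(\tfrac{1}{\lambda}\log K(\lambda)\big) \leq \tfrac{1}{2}\E V^-$, and integrating from $0$ (where $\tfrac{1}{\lambda}\log K(\lambda) \to 0$ since $F$ is integrable, $\E F$ being the centering) gives $\log \E[e^{-\lambda(F-\E F)}] \leq \tfrac{\lambda^2}{2}\E V^-$ for all $\lambda > 0$. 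Finally Markov's inequality $\P(F \leq \E F - r) \leq e^{-\lambda r}\E[e^{-\lambda(F-\E F)}] \leq \exp(\tfrac{\lambda^2}{2}\E V^- - \lambda r)$ optimized at $\lambda = r/\E V^-$ yields the claimed $\exp(-r^2/(2\E V^-))$. The integrability of $F$ and finiteness of $\E e^{-\lambda F}$ can be bootstrapped exactly as in the last part of Theorem \ref{EntIneqs} (case (iii), $\beta = 0$): the bound $\E\exp(\lambda V^-/\theta) < \infty$ is automatic here since $\E V^- < \infty$ gives what is needed after the FKG decoupling, so one first works with truncations $F \wedge n$ (for which everything is finite), derives the bound uniformly in $n$, and passes to the limit by monotone/dominated convergence.

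The main obstacle I anticipate is justifying the decoupling $\E[e^{-\lambda F} V^-] \leq \E[e^{-\lambda F}]\E[V^-]$: this requires a correlation inequality (Harris/FKG) for Poisson functionals with oppositely-signed add-one costs, which must be stated and either cited or proved in the Poisson setting, and one has to check that the hypotheses ``$D_x F \geq 0$ and $D_x V^- \geq 0$ for all $(x,\xi)$'' are exactly what guarantees that $e^{-\lambda F}$ is coordinate-wise non-increasing and $V^-$ coordinate-wise non-decreasing in the discretized picture. A secondary, more technical obstacle is the standard but fiddly truncation/integrability bootstrap needed to license the Herbst differentiation when $F$ is not a priori known to have finite exponential moments; this is routine given the machinery already set up in Theorem \ref{EntIneqs}, but the interplay between truncating $F$ and the resulting change in $V^-$ must be controlled (truncation at level $n$ only decreases $D_xF$ where it matters, hence does not increase $V^-$, so the bound $\E V^- < \infty$ survives truncation).
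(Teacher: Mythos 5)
Your proposal is correct and follows essentially the same route as the paper's proof: bound the entropy via Proposition \ref{EntIneq} and $\psi(z)\leq z^2/2$ for $z\leq 0$ to get $\Ent(e^{uF})\leq \tfrac12 u^2\,\E[e^{uF}V^-]$, decouple by an FKG/Harris inequality for monotone Poisson functionals (the paper supplies exactly this via Lemma \ref{FKG}, quoted from \cite{J_1984}, and its extension Corollary \ref{FKGCor}), then run the Herbst integration, Markov, and a truncation argument with $V^-(F_n)\leq V^-$ and Lemma \ref{ExpLem} to handle integrability. The only differences are cosmetic (the paper truncates first and passes to the limit by de la Vall\'ee-Poussin rather than dominated convergence), so there is nothing substantive to add.
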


\begin{rem} \label{incVRem}
{Assume that the Poisson functional $F$ is non-decreasing.} A sufficient condition for the assumption $DV^-\geq 0$ in the above theorem is that the second interation of the difference operator of $F$ is non-negative. Indeed, assume that
\begin{align*}
D_zD_x F(\xi) \geq 0 \ \ \text{for all} \ \ (z,x,\xi)\in\X\times\X\times\OX.
\end{align*}
Then, $D_xF(\xi+\delta_z) \geq D_xF(\xi)\geq 0$ and hence $D_xF(\xi+\delta_z)^2 \geq D_xF(\xi)^2$ for all $(z,x,\xi)\in\X\times\X\times\OX$. So we see that also $D(DF)^2$ is non-negative, thus yielding
\begin{align*}
D V^- = D \int_\X (DF)^2d\mu = \int_\X D(DF)^2d\mu\geq 0.
\end{align*}
\end{rem}
\bigskip

{We conclude this section with a result that deals with the situation when $F$ is non-decreasing and the difference operator $DF$ is bounded. In this case}, we obtain a deviation inequality for the lower tail by controlling the random variable $V^+$. This is a Poisson space analogue of \cite[Theorem 13]{M_2006}.

\begin{thm}\label{vPlusLowerThm}
{Assume that $F\geq 0$ and }that for some $a> 0$ we have
\begin{align*}
{ 0\leq D_x F(\xi)\leq 1 \ \ \text{for any} \ \ (x,\xi)\in\X\times\OX} \ \ \text{and almost surely} \ \ V^+(\eta) \leq aF(\eta).
\end{align*}
Then $F$ is integrable and for any $r\geq 0$ we have
\begin{align*}
\P(F\leq \E F - r) \leq \exp\left(-\frac{r^2}{2 \max(a,1) \E F}\right).
\end{align*} 
\end{thm}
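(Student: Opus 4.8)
The plan is to mimic the Herbst-type argument that underlies Theorem \ref{EntIneqs}, but applied to a suitably truncated and shifted version of $F$, exactly as in the proof of \cite[Theorem 13]{M_2006}. First I would record the key structural observation: since $0\le D_xF\le 1$ everywhere, we have $(D_x^{<0}F(\xi-\delta_x))^2=0$ and $(D_x^{\ge 0}F(\eta))^2=(D_xF(\eta))^2\le D_xF(\eta)$, so in the lower-tail inequality \eqref{EntIneq2} with $\beta=0$ the relevant variance proxy is $V^- = \int_\X (D_xF(\eta))^2\,d\mu(x)\le V^+(\eta)\le aF(\eta)$, where the last bound is the hypothesis. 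Thus I may as well assume $V^-\le aF$ almost surely, which is the form in which the entropy bound \eqref{EntIneq2} wants to be fed. (One must be a little careful here: $V^+$ involves $F(\eta-\delta_x)$ evaluated off the support, whereas $V^-$ involves only differences at support points, but the pointwise bound $0\le DF\le 1$ forces $(D_x^{>0}F(\eta-\delta_x))^2\le D_x F(\eta-\delta_x)$ and this is what makes the comparison work; I would spell this out.)

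Next I would run the Herbst differential-inequality machinery for the lower tail. Set $K(\lambda)=\E[\exp(-\lambda F)]$ for $\lambda>0$; finiteness for all $\lambda>0$ follows from $F\ge 0$. Applying \eqref{EntIneq2} (or, more directly, Proposition \ref{EntIneq} with $I^c$ the full space and $\beta=0$, then the elementary bounds $\phi(z)\le z^2/2$ for $z\le 0$ and $\psi(z)\le z^2/2$ for $z\le 0$) gives
\begin{align*}
\Ent(e^{-\lambda F}) \le \frac{\lambda^2}{2}\,\E\!\left[e^{-\lambda F}\, V^-(\eta)\right] \le \frac{a\lambda^2}{2}\,\E\!\left[e^{-\lambda F} F\right] = -\frac{a\lambda^2}{2}\, K'(\lambda).
\end{align*}
Writing $\Ent(e^{-\lambda F}) = -\lambda K'(\lambda) - K(\lambda)\log K(\lambda)$ and dividing by $\lambda^2 K(\lambda)$, this is a differential inequality for $\frac{1}{\lambda}\log K(\lambda)$ of precisely the type solved in the proof of Theorem \ref{arbSignThm}: one obtains, for $\lambda\in(0,2/a)$,
\begin{align*}
\log\E[\exp(-\lambda(F-\E F))] \le \frac{a\lambda^2}{2-a\lambda}\,\E F.
\end{align*}
The extra $\max(a,1)$ in the statement (rather than just $a$) is there to absorb the range restriction $\lambda<2/a$: optimizing the Bennett-type bound $\exp\big(\tfrac{a\lambda^2}{2-a\lambda}\E F - \lambda r\big)$ over $\lambda\in(0,2/a)$, and using that for the lower tail one automatically has $r\le \E F$ (since $F\ge 0$), yields a clean bound of the form $\exp(-r^2/(2\max(a,1)\E F))$; I would carry out this one-variable optimization explicitly, or alternatively invoke the standard inequality $h(u):=(1+u)\log(1+u)-u \ge \tfrac{u^2}{2(1+u/3)}$.

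There remain two technical points to handle. One is the a priori integrability of $F$: since $F$ is not assumed integrable, I would first apply the reasoning to the bounded truncations $F_n = F\wedge n$ (or $F\wedge n$ composed with the support-point structure), check that $0\le D_xF_n\le 1$ and $V^+(F_n)\le V^+(F)\le aF\le$ something controllable, derive the moment bound for $F_n$ uniformly in $n$, and pass to the limit by monotone convergence to conclude $\E F<\infty$ and then the bound for $F$ itself — this is the same device used to prove the last assertion of Theorem \ref{EntIneqs}, and I would reuse it verbatim. The other, which I expect to be the main obstacle, is the justification of differentiating $K(\lambda)$ under the expectation and the positivity/sign bookkeeping in the chain $V^-\le V^+\le aF$: getting the one-sided bounds on $\phi$ and $\psi$ on the correct half-line, and making sure the truncation does not spoil the almost-sure inequality $V^+(F_n)\le aF_n$ (it may only give $V^+(F_n)\le aF$, which still suffices since $e^{-\lambda F_n}F \le$ is not integrable-friendly — so one may instead need $V^-(F_n)\le a F_n + 1$ type slack, explaining the $\max(a,1)$), is where the real care is needed. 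Once these are settled the argument is a direct transcription of the $M$-estimator lower-tail bound to the Poisson setting.
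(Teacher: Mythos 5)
There is a genuine gap, and it sits at the very first step. You reduce the hypothesis $V^+\leq aF$ to $V^-\leq aF$ via the claimed chain $V^-\leq V^+\leq aF$, but $V^-$ and $V^+$ are not comparable: with $DF\geq 0$ one has $V^-=\int_\X (D_xF(\eta))^2\,d\mu(x)$ (an integral over \emph{all} of $\X$ against the deterministic measure $\mu$), while $V^+=\int_\X (D_xF(\eta-\delta_x))^2\,d\eta(x)$ (a sum of remove-one-point differences over the \emph{support} of $\eta$). The bound $0\leq DF\leq 1$ gives no almost-sure domination of $V^-$ by $V^+$. Concretely, take $F(\xi)=\min(\xi(B),1)$ with $\mu(B)$ large: then $0\leq DF\leq 1$ and $V^+\leq F$, but on the event $\{\eta(B)=0\}$ one has $V^-=\mu(B)$ while $V^+=aF=0$. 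So feeding the hypothesis into the $V^-$-based inequality \eqref{EntIneq2} (or into the $\psi$/$\mu$-integral term of Proposition \ref{EntIneq}) is not available. Your parenthetical alternative, Proposition \ref{EntIneq} with $I=\emptyset$, does produce the correct $V^+$-type term $\int_\X\phi(-\lambda D_xF(\eta-\delta_x))\,d\eta(x)$, but then your elementary bound $\phi(z)\leq z^2/2$ is unusable: for $\lambda<0$ and $DF\geq 0$ the argument of $\phi$ is \emph{nonnegative}, and $\phi(z)\leq z^2/2$ fails for $z>0$.

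This is exactly where the paper's proof diverges from yours: it applies Proposition \ref{EntIneq} with $I=\emptyset$ and negative $\lambda$, and uses $0\leq DF\leq 1$ together with the monotonicity of $z\mapsto\phi(z)/z^2$ to bound $\phi(-uD_xF(\eta-\delta_x))\leq \phi(-u)\,D_xF(\eta-\delta_x)^2$, yielding $\Ent(e^{uF})\leq \phi(-u)\,a\,\E(Fe^{uF})$ and, after integrating the Herbst differential inequality, $\log\E\,e^{\lambda(F-\E F)}\leq \lambda^2\,\frac{a\phi(-\lambda)/\lambda^2}{1-a\phi(-\lambda)/\lambda}\,\E F$ for $\lambda<0$. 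The constant $\max(a,1)$ then comes from the uniform bound $\frac{a\phi(-\lambda)/\lambda^2}{1-a\phi(-\lambda)/\lambda}\leq \frac{\max(a,1)}{2}$ for all $\lambda<0$ (noted in \cite{M_2006}), not from absorbing a range restriction $\lambda<2/a$; indeed, for the lower tail the denominator exceeds $1$ for every $\lambda<0$, so there is no range restriction at all, and your Bernstein-type bound $\frac{a\lambda^2}{2-a\lambda}\E F$ with the crutch $r\leq \E F$ would in any case not deliver the stated constant. Your truncation plan is in the right spirit, but the correct bookkeeping for the unbounded case is the paper's Lemma \ref{truncLem} (used as in Theorem \ref{arbSignThm}), with integrability of $F$ supplied by Corollary \ref{DevIneq2}, rather than the slack $V^-(F_n)\leq aF_n+1$ you suggest, which again misattributes the origin of $\max(a,1)$.
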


\section{Proofs}

We begin with the proof of the crucial logarithmic Sobolev type inequality, namely Proposition \ref{EntIneq}, that is the foundation of our techniques.

\begin{proof}[Proof of Proposition \ref{EntIneq}]
By \eqref{e:wu}, the inequality
\begin{align*}
\Ent(e^{\lambda F}) &\leq \E \left[ e^{\lambda F}\int_{\X}\psi(\lambda D_x F) \ d\mu(x)\right]
\end{align*}
holds. Now, since $\psi(0) = 0$, $\phi(0)=0$ and $\psi(z) = e^z \phi(-z)$ for any $z\in\R$, we have
\[
\psi(\lambda D_xF(\eta)) = \psi(\lambda D^I_xF(\eta)) + e^{\lambda D_xF(\eta)}\phi(-\lambda D^{I^c}_xF(\eta)).
\]
Hence, we compute
\begin{align*}
\Ent(e^{\lambda F})  &\leq \E \ \int_{\X} e^{\lambda F}\psi(\lambda D^I_x F) \ d\mu(x) + \E \int_{\X} e^{\lambda D_x F + \lambda F} \phi(-\lambda D^{I^c}_x F) \ d\mu(x)\\
&= \E \ \int_{\X} e^{\lambda F}\psi(\lambda D^I_x F) \ d\mu(x) + \E \int_{\X} e^{\lambda F(\eta + \delta_x)} \phi(-\lambda D^{I^c}_x F) \ d\mu(x)\\
&= \E \ \int_{\X} e^{\lambda F}\psi(\lambda D^I_x F) \ d\mu(x) + \E \int_\X e^{\lambda F} \phi(-\lambda D^{I^c}_x F(\eta -\delta_x))  d\eta(x).
\end{align*}
At this, the last equality holds by the Mecke formula \eqref{e:mecke}.
\end{proof}

The following lemma will be used occasionally in the upcoming proofs.

\begin{lem} \label{ExpLem}
Assume that for some $\beta\in\R$ we have $\E V_\beta^+ < \infty$ or $\E V_\beta^- < \infty$. Then $F$ is integrable.
\end{lem}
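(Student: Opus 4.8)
The plan is to deduce integrability of $F$ from the assumed finiteness of $\E V_\beta^+$ (the case $\E V_\beta^- < \infty$ being completely symmetric, replacing $F$ by $-F$ and noting that $V_\beta^-(F) = V_{-\beta}^+(-F)$ up to the obvious bookkeeping with the indicator thresholds). Recall that
\[
V_\beta^+ = \int_{\X} (D^{\leq \beta}_x F(\eta))^2 \, d\mu(x) + \int_\X (D^{ > \beta}_x F(\eta-\delta_x))^2 \, d\eta(x),
\]
so that $\E V_\beta^+ < \infty$ gives, in particular, that both summands have finite expectation. The key observation is that, by applying the Mecke formula \eqref{e:mecke} to the second summand with $H(\xi,x) = (D^{>\beta}_x F(\xi - \delta_x))^2 = (D^{>\beta}_x F(\xi - \delta_x)) \cdot (\cdots)$ — more precisely with $H(\xi, x) = (F(\xi) - F(\xi - \delta_x))^2 \ind\{F(\xi) - F(\xi - \delta_x) > \beta\}$ — one gets
\[
\E \int_\X (D^{ > \beta}_x F(\eta-\delta_x))^2 \, d\eta(x) = \int_{\X} \E\big[(D^{>\beta}_x F(\eta))^2\big] \, d\mu(x) = \E \int_{\X} (D^{>\beta}_x F(\eta))^2 \, d\mu(x),
\]
using Tonelli to swap $\E$ and the $\mu$-integral (everything is non-negative). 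Combining this with the first summand of $V_\beta^+$, and using that $(D^{\leq \beta}_x F)^2 + (D^{>\beta}_x F)^2 = (D_x F)^2$ pointwise, we conclude that
\[
\E \int_\X (D_x F(\eta))^2 \, d\mu(x) \leq \E V_\beta^+ + \E \int_\X (D^{>\beta}_x F(\eta))^2 \, d\mu(x) < \infty,
\]
wait — more directly: $\E\int_\X (D_xF)^2 d\mu = \E\int_\X (D^{\leq\beta}_xF)^2 d\mu + \E\int_\X (D^{>\beta}_xF)^2 d\mu = \E V_\beta^+ < \infty$ after the Mecke substitution. So in all cases $\E \int_\X (D_x F(\eta))^2 \, d\mu(x) < \infty$; in particular $D_\bullet F(\eta) \in L^2(\mu)$ almost surely, and this quantity is integrable.

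From here I would invoke the standard argument that a Poisson functional whose add-one cost operator lies in $L^2(\mu)$ (with finite expected squared norm) is automatically square-integrable, hence integrable. Concretely, one uses the Poincaré-type / Mecke-based identity: for a centred, suitably integrable $F$ one has $\V(F) \leq \E \int_\X (D_x F)^2 \, d\mu(x)$ (the Poisson Poincaré inequality, itself a consequence of the Mecke formula, or alternatively of \eqref{e:wu} via a linearisation of the entropy as $\lambda \to 0$). The mild circularity — Poincaré is usually stated for $F \in L^2$ — is handled by the usual truncation: apply the estimate to $F_n := (F \wedge n) \vee (-n)$, note $|D_x F_n| \leq |D_x F|$ pointwise so that $\E\int_\X (D_x F_n)^2 d\mu \leq \E\int_\X (D_x F)^2 d\mu =: M < \infty$ uniformly in $n$, deduce $\V(F_n) \leq M$ for all $n$, and let $n \to \infty$ via monotone/Fatou arguments to conclude $\E F^2 < \infty$ (one also needs the $F_n$ to have a uniformly controlled mean, which follows once the variances are uniformly bounded together with, e.g., considering $F_n$ relative to a fixed reference value such as $F$ evaluated at a fixed configuration, or simply by a standard $L^2$-bounded-plus-a.s.-convergent argument).

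The step I expect to be the main obstacle is precisely this last passage: rigorously bootstrapping from "the expected squared $L^2(\mu)$-norm of $DF$ is finite" to "$F \in L^1$" without assuming a priori that $F \in L^2$, i.e. justifying the truncation and the application of the Poincaré inequality to $F_n$, and controlling $\E F_n$ along the way. Everything else — the Mecke substitution, Tonelli, and the pointwise identity relating the $\beta$-truncated difference operators to $(D_xF)^2$ — is routine. I would therefore structure the write-up as: (1) reduce to showing $\E\int_\X (D_xF)^2 d\mu < \infty$ via the Mecke formula; (2) invoke the Poisson Poincaré inequality on truncations $F_n$; (3) pass to the limit to obtain $\E F^2 < \infty$, hence $\E|F| < \infty$.
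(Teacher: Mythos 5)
Your proposal is correct and follows essentially the same route as the paper: the Mecke formula gives $\E\int_\X (D_xF)^2\,d\mu = \E V_\beta^+$ (resp.\ $\E V_\beta^-$), and the Poincar\'e inequality applied to the truncations $F_n=\min(\max(F,-n),n)$, using $|D_xF_n|\leq|D_xF|$, yields the uniform variance bound. The step you flag as delicate, controlling $\E F_n$, is handled in the paper exactly by the argument you sketch: if $\E F_{n_k}\to\pm\infty$ along a subsequence, the uniform variance bound would force $F_{n_k}\to\pm\infty$ in probability, contradicting $F_n\to F$ in probability; hence the means are bounded, $\sup_n\E (F_n^2)<\infty$, and uniform integrability gives $\E|F|<\infty$.
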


\begin{proof}
The proof uses a truncation argument that is standard in this context, see e.g. the proof of \cite[Proposition 3.1]{W_2000}. The statement for $V_\beta^-$ is proved in the same way than for $V_\beta^+$. Consider for any $n\in\N$ the truncation
\[
F_n = \min(\max(F,-n),n).
\]
Then $\E (F_n)^2<\infty$, hence the Poincar\'e inequality for Poisson point processes (see e.g. \cite[Remark 1.4]{W_2000}) together with the Mecke formula \eqref{e:mecke} yield
\begin{align*}
\V F_n \leq \E \int (D_x F_n(\eta))^2 d \mu(x) \leq \E \int (D_x F(\eta))^2d \mu(x) = \E V_\beta^+.
\end{align*}
Therefore, $\sup_{n\in\N}\V F_n \leq \E V_\beta^+ < \infty$. Now, if there is a subsequence $\{F_{n_k}\}_{k\in\N}$ satisfying $\lim_{k\to\infty}\E F_{n_k} = \pm\infty$, then $F_{n_k}\to \pm\infty$ in probability. This would be a contradiction to $F_n \to F$ in probability. We see that the family $\{\E F_n\}_{n\in\N}$ is bounded. Together with $\sup_{n\in\N}\V F_n < \infty$ this implies also $ \sup_{n\in\N}\E (F^2_n) < \infty$. Thus, the family $\{F_n\}_{n\in\N}$ is uniformly integrable. In particular, as desired, we have $\E |F| < \infty$.
\end{proof}

We continue with the proof of Theorem \ref{EntIneqs}. As in the proof of the product space version \cite[Theorem 2]{BLM_2003}, we also need \cite[Lemma 11]{M_2000}. This result states that for any $\lambda>0$ and any two random variables $X$ and $Y$ satisfying $\E (e^{\lambda X}), \E (e^{\lambda Y}) <\infty$, we have
\begin{align} \label{decoupling}
\frac{\lambda \E(X e^ {\lambda Y})}{\E(e^ {\lambda Y})} \leq \frac{\lambda \E(Y e^ {\lambda Y})}{\E(e^ {\lambda Y})} + \log \E(e^{\lambda X}) - \log \E(e^{\lambda Y}).
\end{align}
\begin{proof}[Proof of Theorem \ref{EntIneqs}]
We prove (\ref{EntIneq1}). We only deal with the case $\beta\neq 0$, whereas the case $\beta = 0$ can be obtained with by similar arguments. To prove the desired inequality we adapt the proof of \cite[Theorem 2]{BLM_2003} and combine this with arguments from the proof of \cite[Proposition 3.1]{W_2000}. Let $\phi$ and $\psi$ be as in Proposition \ref{EntIneq}. Then $\psi(z)/z^2$ and $\phi(z)/z^2$ are non-decreasing. Hence, for any $u\in(0,\lambda]$ we have
\begin{align*}
\psi(u z) & \leq \rlap{$(\psi(u\beta)/\beta^2)z^2$}\phantom{(\phi(-u\beta)/\beta^2)z^2} \leq u \Phi_\beta(u) z^2 \ \ \text{for} \ z \leq \beta,\\
\phi(-u z) &\leq (\phi(-u\beta)/\beta^2)z^2 \leq u \Phi_\beta(u) z^2 \ \ \text{for} \ z \geq \beta.
\end{align*}
Together with $\psi(0)=\phi(0)=0$ this gives
\begin{align*}
\psi(u D^{\leq\beta}_x F(\eta)) &\leq u \Phi_\beta(u) (D^{\leq\beta}_x F(\eta))^2,\\
\phi(-u D^{ >\beta}_x F(\eta-\delta_x)) &\leq u \Phi_\beta(u) (D^{ >\beta}_x F(\eta-\delta_x))^2.
\end{align*}
Hence, taking $I=\{(x,\xi) \in \X\times\OX : D_xF(\xi) \leq \beta\}$, it follows from Proposition \ref{EntIneq} that
\begin{align*}
\Ent(e^{u F}) &\leq \E \left[e^{u F} \left(\int_{\X} \psi(u D^{\leq\beta}_x F(\eta)) \ d\mu(x) + \int_\X \phi(-u D^{>\beta}_x F(\eta-\delta_x)) d\eta(x)\right)\right]\\
&\leq u \Phi_\beta(u) \ \E \left[ e^{u F} \left(\int_{\X} (D^{\leq \beta}_x F(\eta))^2 \ d\mu(x) + \int_\X (D^{ >\beta}_x F(\eta-\delta_x))^2  d\eta(x)\right)\right]\\
&= u\Phi_\beta(u) \ \E(V_\beta^+e^{u F}).
\end{align*}
Moreover, taking $X  =V_\beta^+/\theta$ and $Y = F$, it follows from (\ref{decoupling}) that
\begin{align*}
\frac{\E(V_\beta^+ e^ {u F})}{\E(e^ {u F})} \leq \frac{\theta \E(F e^ {u F})}{\E(e^ {u F})} + \frac{\theta}{u}\log \E(e^{u V_\beta^+ / \theta}) - \frac{\theta}{u}\log \E(e^{u F}).
\end{align*}
{Invoking the definition of the entropy, it follows from the last two displays that}
\begin{align*}
\frac{\E(F e^{u F})}{u \E(e^{u F})} - \frac{\log \E (e^{u F})}{u^2} 
\leq 
\Phi_\beta(u)\left(\frac{\theta \E(F e^ {u F})}{u\E(e^ {u F})} + \frac{\theta}{u^2}\log \E(e^{u V_\beta^+ / \theta}) - \frac{\theta}{u^2}\log \E(e^{u F})\right).
\end{align*}
Since by assumption $\Phi_\beta(u)\theta\leq \Phi_\beta(\lambda)\theta < 1$, the latter inequality is equivalent to
\begin{align*}
\frac{\E(F e^{u F})}{u \E(e^{u F})} - \frac{\log \E (e^{u F})}{u^2} 
\leq 
\frac{\Phi_\beta(u)\theta \log \E(e^{u V_\beta^+ / \theta})}{u^2 (1-\Phi_\beta(u) \theta)}.
\end{align*}
Defining  $h(u) = \frac{1}{u} \log \E(e^{u F})$ and $g(u) = \log\E(e^{u V_\beta^+})$, the above estimate can be restated as follows:
\begin{align*}
h'(u) \leq \frac{\Phi_\beta(u)\theta g(u/\theta)}{u^2(1-\Phi_\beta(u)\theta)}
\end{align*}
for any $u\in(0,\lambda]$. Since $\lim_{u\to 0+} h(u) = \E F$, integration from $0$ to $\lambda$ gives
\begin{align}\label{diffineq}
h(\lambda) \leq \E F + \int_0^\lambda \frac{\Phi_\beta(u)\theta g(u/\theta)}{u^2(1-\Phi_\beta(u)\theta)} \ du.
\end{align}
It's a well known fact that the logarithm of a moment generating function is convex, hence $g$ is convex on the interval $[0,\lambda/\theta]$. In particular, we have for any $u\in (0,\lambda]$ that
\begin{align*}
\frac{g(u/\theta)}{u(1-\Phi_\beta(u)\theta)} \leq \frac{\tfrac{g(\lambda/\theta)}{\lambda} \ u}{u(1-\Phi_\beta(u)\theta)} \leq \frac{g(\lambda/\theta)}{\lambda(1-\Phi_\beta(\lambda) \theta)}.
\end{align*}
Hence,
\begin{align*}
\int_0^\lambda \frac{\Phi_\beta(u)\theta g(u/\theta)}{u^2(1-\Phi_\beta(u)\theta)} \ du \leq \frac{\theta g(\lambda/\theta)}{\lambda(1-\Phi_\beta(\lambda) \theta)} \int_0^\lambda \frac{\Phi_\beta(u)}{u} \ du.
\end{align*}
In the case $\beta < 0$, we bound the integral on the right hand side by $\Phi_\beta(\lambda) = \Psi_\beta(\lambda)$. This works since $\Phi_\beta(u)/u$ is non-decreasing. In the case $\beta > 0$, the integral can be explicitly computed {and one obtains $\int_0^\lambda (\Phi_\beta(u)/u) du = \phi(\lambda\beta)/(\lambda\beta^ 2) = \Psi_\beta(\lambda)$}. Combining this with (\ref{diffineq}) gives
\begin{align*}
\log \E(e^{\lambda F}) \leq \lambda\E F + \frac{\Psi_\beta(\lambda) \theta }{1-\Phi_\beta(\lambda) \theta} \ g(\lambda/\theta).
\end{align*}
This proves inequality (\ref{EntIneq1}). Repeating the above reasoning for $-F$ instead of $F$  where the set $I$ is replaced by its complement proves inequality (\ref{EntIneq2}).

To prove the second part of the theorem, assume that one of the conditions (i) to (iii) is satisfied. For $n\in\N$ consider the truncated random variables
\[
F_n = \min(\max(F,-n),n).
\]
We will now conclude that if $\E\exp(\lambda V_\beta^ + / \theta) < \infty$, then $F$ is integrable and the family of random variables
\begin{align}\label{famRV}
\{\exp(\lambda(F_n-\E F_n))\}_{n\in\N}
\end{align}
converges in probability to $\exp(\lambda(F-\E F))$ and is uniformly integrable. Thus, it follows that $\E \exp(\lambda(F-\E F)) < \infty$ and hence also $\E\exp(\lambda(F))<\infty$.

Integrability of $F$ follows from Lemma \ref{ExpLem} since the assumption $\E(\exp(\lambda V_\beta^+/\theta)) < \infty$ implies that $\E V_\beta^+ < \infty$.
By {dominated} convergence, integrability of $F$ now implies the convergence in probability of the sequence in (\ref{famRV}).

To prove the uniform integrability, first observe, that if (i), (ii) or (iii) holds, then
\begin{align*}
V_\beta^+(F_n)\leq V_\beta^+(F) = V_\beta^+.
\end{align*}
Also note that we can choose $\nu>1$ such that $\Phi_\beta(\nu\lambda)\nu\theta < 1$. Then $\E(e^{\lambda\nu F_n})<\infty$ for all $n\in\N$, so it follows from (\ref{EntIneq1}) that
\begin{align*}
\log \E(\exp(\lambda\nu(F_n-\E F_n))) &\leq \frac{\Psi_\beta(\nu\lambda)\nu \theta}{1-\Phi_\beta(\nu\lambda)\nu \theta} \log \E \left( \exp\left(\frac{\lambda V_\beta^+(F_n)}{\theta}\right)\right)\\
&\leq \frac{\Psi_\beta(\nu\lambda)\nu \theta}{1-\Phi_\beta(\nu\lambda)\nu \theta} \log \E \left( \exp\left(\frac{\lambda V_\beta^+}{\theta}\right)\right) < \infty.
\end{align*}
Denoting the map $x\mapsto x^\nu$ by $\Lambda$, the above inequality yields
\begin{align*}
\sup_{n\in\N}\E[\Lambda(\exp(\lambda(F_n-\E F_n)))] < \infty.
\end{align*}
By the Theorem of de la Vall\'ee-Poussin this implies uniform integrability of the family in (\ref{famRV}).

Repeating the above reasoning for $-F$ instead of $F$  where inequality (\ref{EntIneq2}) is used instead of (\ref{EntIneq1}) proves the corresponding statement for $V_\beta^-$.
\end{proof}

{
\begin{proof}[Proof of Corollary \ref{DevIneq1}]
It follows from Lemma \ref{ExpLem} that $F$ is integrable. Theorem \ref{EntIneqs} yields
\[
\log \E (e^{\lambda (F - \E F)}) \leq \inf_{\theta\in (0,1/\Phi_\beta(\lambda))}\frac{\Psi_\beta(\lambda) \lambda c}{1-\Phi_\beta(\lambda) \theta} = \Psi_\beta(\lambda) \lambda c.
\]
Markov's inequality now gives for any $\lambda>0$,
\[
\P(F \geq \E F + r) = \P(e^{\lambda (F-\E F)} \geq e^{\lambda r}) \leq \frac{\E(e^{\lambda (F-\E F)})}{e^{\lambda r}} \leq \exp\left(\Psi_\beta(\lambda) \lambda c -\lambda r\right).
\]
Optimizing in $\lambda$ yields the desired deviation bounds.
\end{proof}
}

\begin{proof}[Proof of Corollary \ref{cor2}]
Here we adapt and combine the proofs of \cite[Theorem 8 and Theorem 9]{BLM_2003}. {For $\alpha=0$, the statement follows directly from Theorem \ref{EntIneqs}, so let $\alpha\in(0,2)$.} Let $\gamma = 1-\alpha/2$. Then, {on the event $\{F\neq 0\}$, we have}
\begin{align*}
&\int_\X (D^+_x F^\gamma(\eta-\delta_x))^2  d\eta(x)\\ &= \int_\X \ind\{F(\eta)^\gamma \geq F(\eta-\delta_x)^\gamma >0  \} (F(\eta)^ \gamma - F(\eta - \delta_x)^ \gamma)^2  d\eta(x) \\
& \quad\quad +  \int_\X \ind\{F(\eta)^\gamma \geq F(\eta-\delta_x)^\gamma { =0}  \} F(\eta)^ {2\gamma}  d\eta(x) 
\\ &= \int_\X \ind\{F(\eta) \geq F(\eta-\delta_x)  >0\}\left(\frac{F(\eta)}{F(\eta)^{1-\gamma}} - \frac{F(\eta - \delta_x)}{F(\eta - \delta_x)^ {1-\gamma}}\right)^2 d\eta(x)\\
&  \quad\quad+  \int_\X \ind\{F(\eta) \geq F(\eta-\delta_x) { =0}  \} F(\eta)^ {2\gamma}  d\eta(x).
\end{align*}
Since $1-\gamma >0$, we have that $F(\eta) \geq F(\eta - \delta_x)$ implies $F(\eta)^{1-\gamma} \geq F(\eta - \delta_x)^{1-\gamma}$. Hence, the above expression does not exceed
\begin{align*}
&\int_\X \ind\{F(\eta) \geq F(\eta-\delta_x)  >0\}\left(\frac{F(\eta)}{F(\eta)^{1-\gamma}} - \frac{F(\eta - \delta_x)}{F(\eta)^ {1-\gamma}}\right)^2  d\eta(x)\\
&  \quad\quad+  \int_\X \ind\{F(\eta) \geq F(\eta-\delta_x) { =0}  \} F(\eta)^ {2\gamma}  d\eta(x)\\
= \ &\frac{1}{F(\eta)^{\alpha}}\int_\X \left(D^+_x F(\eta - \delta_x)\right)^2 d\eta(x).
\end{align*}
Quite similarly one obtains {that on the event $\{F\neq 0\}$,}
\begin{align*}
\int_{\X} (D^-_x F^\gamma(\eta))^2 \ d\mu(x) \leq \frac{1}{F(\eta)^\alpha} \int_{\X} (D^-_x F(\eta))^2 \ d\mu(x).
\end{align*}
Hence, it follows that {on the event $\{F\neq 0, V^+ \leq G F^\alpha\}$,}
\begin{align*}
V^+(F^\gamma) = \int_{\X} (D^-_x F^\gamma(\eta))^2 \ d\mu(x) + \int_\X (D^+_x F^\gamma(\eta-\delta_x))^2  d\eta(x) \leq \frac{V^+}{F(\eta)^\alpha} \leq G.
\end{align*}
{Moreover, it is easy to check that on the event $\{F=0, V^+\leq GF^\alpha\}$, one has that $V^+(F^\gamma) = 0 = V^+$. Therefore, by virtue of the assumption that almost surely $V^+\leq GF^\alpha$, it follows that almost surely $V^+(F^\gamma)\leq G$.} Applying Theorem \ref{EntIneqs} to the random variable $F^\gamma$ yields the result.
\end{proof}

\begin{proof}[Proof of Corollary \ref{DevIneq2}]
{For $\alpha=0$, the statement follows directly from Corollary \ref{DevIneq1} (ii), so let $\alpha\in(0,2)$.} Let $\gamma = 1 - \alpha/2$. Continuing in the same way as in the proof of Corollary \ref{cor2} yields {that almost surely}
\begin{align*}
V^+(F^\gamma) = \int_{\X} (D^-_x F^\gamma(\eta))^2 \ d\mu(x) + \int_\X (D^+_x F^\gamma(\eta-\delta_x))^2 d\eta(x) \leq c.
\end{align*}
We conclude that Corollary \ref{DevIneq1} (ii) applies to $F^\gamma$. So $F^\gamma$ is non-negative and has an exponentially decaying upper tail. Thus, by virtue of \cite[Lemma 3.4]{K_2002}, all moments of $F^\gamma$ exist. In particular, $F$ is integrable. As it was pointed out in \cite[p. 1588]{BLM_2003}, we can now write
\begin{align*}
\P(F\geq\E F + r) &= \P(F^ \gamma \geq (r + \E F)^ \gamma) \leq \P(F^ \gamma - \E (F^ \gamma) \geq (r + \E F)^ \gamma - (\E F)^ \gamma)\\ 
&\leq \exp\left(-\frac{((r+\E F)^{\gamma} - (\E F)^{\gamma})^2}{2c}\right).
\end{align*}
\end{proof}

We continue with the proof of Theorem \ref{arbSignThm}. To get prepared for this, we first establish the following lemma.

\begin{lem}\label{truncLem}
Let $n\in\N$ and consider $F_n = \min(\max(F,-n),n)$ and $V^+_{(n)}=V^+(F_n)$. Then for any real number $b\geq 0$, almost surely
\begin{align*}
F (V^+_{(n)} - b) \leq F_n (V^+ - b) \ \ \text{if} \ \ F,F_n\geq 0,\\
F (V^+_{(n)} - b) \geq F_n (V^+ - b) \ \ \text{if} \ \ F,F_n\leq 0.\\
\end{align*}
\end{lem}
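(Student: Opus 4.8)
The plan is to analyze the two integral pieces that make up $V^+(F_n)$ separately, and to compare each of them pointwise (i.e. realization by realization, and for each point $x$ of the integration) with the corresponding piece of $V^+(F)$. Recall that
\[
V^+(F_n) = \int_\X (D_x^- F_n(\eta))^2\, d\mu(x) + \int_\X (D_x^+ F_n(\eta-\delta_x))^2\, d\eta(x),
\]
and similarly for $V^+(F)$. The key elementary observation is that the truncation map $t_n(s) = \min(\max(s,-n),n)$ is $1$-Lipschitz and non-decreasing, so for any $\xi$ and $x$ one has $|D_x F_n(\xi)| \le |D_x F(\xi)|$, and moreover $D_x F_n(\xi)$ has the same sign as $D_x F(\xi)$ (when the latter is nonzero). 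Consequently $(D_x^- F_n(\eta))^2 \le (D_x^- F(\eta))^2$ and $(D_x^+ F_n(\eta-\delta_x))^2 \le (D_x^+ F(\eta-\delta_x))^2$ pointwise, whence $V^+(F_n) \le V^+(F)$ almost surely — a fact already used implicitly in the proof of Theorem \ref{EntIneqs}.

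From here I would split into the two regimes. Suppose first that $F, F_n \ge 0$. I want to show $F(V^+_{(n)} - b) \le F_n(V^+ - b)$. Since $0 \le F_n \le F$ (truncation from above cannot decrease a nonnegative value below what... — more precisely, when $F\ge 0$ we have $F_n = \min(F,n) \le F$, and $F_n \ge 0$), and since $V^+_{(n)} \le V^+$, I can chain the inequalities: $F \cdot V^+_{(n)} \le F \cdot V^+$ is not quite what is needed, so instead I compare $F(V^+_{(n)}-b)$ and $F_n(V^+-b)$ by writing their difference and isolating the sign. When $V^+_{(n)} - b \le 0$: then $F(V^+_{(n)}-b) \le F_n(V^+_{(n)}-b) \le F_n(V^+ - b)$, using $0\le F_n\le F$ for the first step (multiplying a non-positive number by a larger nonnegative factor makes it smaller) and $V^+_{(n)}\le V^+$ for the second. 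When $V^+_{(n)} - b > 0$: here one needs $F(V^+_{(n)} - b) \le F_n(V^+ - b)$, and the subtle point is that $F$ could be strictly larger than $F_n$ while $V^+_{(n)}$ is only slightly less than $V^+$. This is precisely where the truncation must be exploited more carefully, not just via the two crude monotonicities.

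The resolution of that subtle case — and the main obstacle — is the following sharper pointwise estimate, which replaces the crude $V^+_{(n)} \le V^+$: one shows that, on the set $\{F > n\}$ (the only place where $F_n \ne F$), the add-one cost differences satisfy $D_x F_n(\eta) \le 0$ when $F > n$ and $F(\eta - \delta_x) > n$, etc., so that the contributions to $V^+_{(n)}$ coming from configurations where $F$ exceeds the truncation level vanish or are controlled; combined with $F_n = n$ there, one gets $F \cdot V^+_{(n)} \le F_n \cdot V^+$ directly on $\{F>n\}$ and $F = F_n$ (hence trivial equality up to the $V^+_{(n)}\le V^+$ bound) on $\{F\le n\}$. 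I would carry this out by a careful case distinction on whether $F(\eta)$ and $F(\eta-\delta_x)$ lie above or below $n$, mirroring the case analysis already performed in the proof of Corollary \ref{cor2}. The case $F, F_n \le 0$ is handled symmetrically by applying the result just obtained to $-F$ (noting $(-F)_n = -F_n$ and $V^+$ is insensitive to this reflection in the relevant sense, or by re-running the same case analysis with reversed inequalities), which gives the second displayed inequality. Assembling the cases yields the lemma.
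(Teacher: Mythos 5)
There is a genuine gap at exactly the point you flag as ``the subtle case''. For $F,F_n\geq 0$ with $V^+_{(n)}>b$, the lemma reduces (as you correctly state) to showing $F\,V^+_{(n)}\leq F_n\,V^+$ on $\{F>n\}$, i.e.\ $A\,V^+_{(n)}\leq V^+$ with $A=F/n>1$. But your justification for this is not a proof: the contributions to $V^+_{(n)}$ on $\{F>n\}$ do \emph{not} vanish in general (for $x\in\eta$ with $F(\eta-\delta_x)<n$ one has $(F_n(\eta)-F_n(\eta-\delta_x))_+^2=(n-F_n(\eta-\delta_x))^2>0$, and similarly for the $\mu$-integral when $F(\eta+\delta_x)<n$), and ``or are controlled'' is where the whole difficulty sits. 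What is actually needed is a pointwise \emph{damping} estimate such as
\begin{align*}
A\,(n-F_n(\eta-\delta_x))_+^2 \leq (An-F(\eta-\delta_x))_+^2
\quad\text{and}\quad
A\,(F_n(\eta+\delta_x)-n)_-^2 \leq (F(\eta+\delta_x)-An)_-^2,
\end{align*}
which rests on the elementary but nontrivial fact that $(An-m)^2\geq A(n-m)^2$ whenever $A\geq 1$ and $|m|\leq n$ (equivalently $An^2\geq m^2$), applied with $m=F_n(\eta-\delta_x)\geq F(\eta-\delta_x)$. This is the core of the paper's proof of the lemma, and nothing in your sketch supplies it; in particular the case analysis of Corollary \ref{cor2} compares $F^\gamma$ with $F$ and has no bearing on this truncation comparison.

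A second, smaller defect is the treatment of the case $F,F_n\leq 0$. Reducing it to the first case via $F\mapsto -F$ does not work, because $V^+(-G)=V^-(G)$, not $V^+(G)$: the second inequality of the lemma still involves $V^+(F_n)$ and $V^+(F)$, so the reflection changes the object. The short correct argument (used in the paper) is that if $F\neq F_n$ and $F,F_n\leq 0$ then $F<-n$ and $F_n=-n$ is the minimal value of $F_n$, which forces $V^+_{(n)}=0$; the inequality $-Fb\geq nb\geq -nV^+ + nb$ then follows at once, while on $\{F=F_n\}$ both inequalities are immediate from $V^+_{(n)}\leq V^+$ and the sign of $F$. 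Your fallback ``re-run the same case analysis with reversed inequalities'' would again have to face the missing quantitative step above, so as written the proposal does not yet prove the lemma.
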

\begin{proof}
It is easy to see that $V^+_{(n)} \leq V^+$. Hence, the desired statement holds on the event $\{F=F_n\}$. If $F\neq F_n$, then either $F_n = n < F$ or $F_n = -n > F$. The latter case implies $V^+_{(n)} = 0$ and $F,F_n \leq 0$, hence the desired statement holds. So consider the case $F_n = n < F$ and let $A=F/n$. Then the desired inequality is equivalent to
\begin{align*}
A V^+_{(n)} \leq V^+ +(A-1) b.
\end{align*}
Since $b\geq 0$ and $A>1$, the above inequality is implied by $AV^+_{(n)}\leq V^+$, i.e. by
\begin{align*}
A\left(\int_\X (n-F_n(\eta - \delta_x))_+^2 d\eta(x) + \int_\X (F_n(\eta + \delta_x ) - n)_-^2 d\mu(x)\right)\\
 \leq \int_\X (An - F(\eta - \delta_x))_+^2 d\eta(x) + \int_\X (F(\eta + \delta_x) - An)_-^2 d\mu(x).
\end{align*}
To prove this, it suffices to conclude
\begin{align}
\label{ineq1}A(n-F_n(\eta-\delta_x))_+^2 \leq (An - F(\eta - \delta_x))_+^2,\\
\label{ineq2}A(F_n(\eta+\delta_x) - n)_-^2 \leq (F(\eta + \delta_x) - An)_-^2.
\end{align}
We prove (\ref{ineq1}). If $F(\eta - \delta_x)>n$, then 
\begin{align*}
A (n-F_n(\eta - \delta_x))_+^2 = 0 \leq (An-F(\eta - \delta_x))_+^2.
\end{align*}
If $F(\eta - \delta_x)\leq n$, then
\begin{align*}
F(\eta - \delta_x) \leq F_n(\eta - \delta_x) =: m.
\end{align*}
Now, since $|m| \leq n$, we have $A n^2 - m^2 \geq 0$. This gives $(A^2- A)n^2 + (1-A)m^2\geq 0$, thus
\begin{align*}
(An-m)^2 = A^2 n^2 - 2 Anm + m^2 \geq An^2 - 2Anm + Am^2 = A(n-m)^2.
\end{align*}
Hence,
\begin{align*}
A(n-F_n(\eta-\delta_x))_+^2 \leq
(An-F_n(\eta-\delta_x))_+^2 \leq
(An-F(\eta-\delta_x))_+^2,
\end{align*}
where the last inequality follows from $F(\eta - \delta_x) \leq F_n(\eta - \delta_x)<An$. This proves (\ref{ineq1}) and analogously one obtains (\ref{ineq2}). The result follows.
\end{proof}

\begin{proof}[Proof of Theorem \ref{arbSignThm}]
For the case when $F$ is bounded, we adapt the proof of \cite[Theorem 5]{BLM_2003}. Here we can argue in the same way as in the beginning of the proof of Theorem \ref{EntIneqs} to obtain for any $u\in(0,\lambda]$,
\begin{align*}
\Ent(e^{u F}) \leq \frac{1}{2} u^2 \E(V^+ e^{u F}).
\end{align*}
Invoking the assumption on $V^+$ yields
\begin{align*}
u \E(F e^{u F}) - \E(e^{u F})\log \E(e^{u F}) \leq \frac{1}{2} u^2 (a\E(F e^{u F}) + b\E(e^{u F})).
\end{align*}
With $h(u) = \tfrac{1}{u} \log \E(e^{u F})$ this can be rearranged as
\begin{align*}
h'(u) \leq \frac{1}{2}(a \log(\E (e^{u F}))' + b).
\end{align*}
Integrating this from $0$ to $\lambda$ gives
\begin{align*}
h(\lambda) - \E F \leq \frac{1}{2}\left( a \log(\E (e^{\lambda F}) +\lambda b\right).
\end{align*}
Noting that $a\lambda<2$ and rearranging the above inequality, we obtain the result for the bounded case.

For the unbounded case, consider for any $n\in\N$ the truncated random variables
\begin{align*}
F_n = \min(\max(F, -n), n).
\end{align*}
It follows from the assumptions and Lemma \ref{truncLem} that almost surely
\begin{align*}
F(V^+_{(n)} - b) \leq aF_nF \ \ \text{if} \ \ F\geq 0,\\
F(V^+_{(n)} - b) \geq aF_nF \ \ \text{if} \ \ F\leq 0.
\end{align*}
Note also that for $F = 0 = F_n$ we have $V^+_{(n)} \leq V^+ \leq b$. Therefore, almost surely
\begin{align*}
V^+_{(n)} \leq a F_n + b,
\end{align*}
so the result holds for all $F_n$. By dominated convergence, the sequence $\E F_n$ is convergent, hence bounded above by some constant $C$. Moreover, we can choose a $\nu> 1$ such that $\nu\lambda < 2/a$. Thus, since we already proved that the result applies to all the $F_n$, we conclude
\begin{align*}
\sup_{n\in\N}\E[\exp(\lambda(F_n-\E F_n))^\nu] \leq \exp \left(\frac{\nu^2\lambda^2}{2 - a\nu\lambda} (a C + b) \right) < \infty.
\end{align*}
By the Theorem of de la Vall\'ee-Poussin this implies that the family of random variables
\begin{align*}
\{\exp(\lambda(F_n-\E F_n))\}_{n\in\N}
\end{align*}
is uniformly integrable. Continuing as in the proof of Theorem \ref{EntIneqs} gives
\begin{align*}
\lim_{n\to\infty}\E \exp(\lambda(F_n-\E F_n)) = \E \exp(\lambda(F-\E F))<\infty.
\end{align*}
We note again that the result is already proved for the $F_n$ and that $\E F_n \to \E F$ as $n\to\infty$. This concludes the proof of the first inequality.

The deviation inequality now follows using the inequality we just proved together with Markov's inequality and \cite[Lemma 11]{BLM_2003}.
\end{proof}

For the proof of Theorem \ref{LowerDevIneq} we use the following FKG inequality for Poisson point processes, taken from \cite[Lemma 2.1]{J_1984}, see also \cite[Theorem 1.4]{LP_2011}.

\begin{lem} \label{FKG}
Let $F$ and $G$ be bounded Poisson functionals and assume that
\begin{align*}
D_xF(\xi), D_xG(\xi)\geq 0 \ \ \text{for all} \ \ (x,\xi)\in\X\times\OX.
\end{align*}
Then
\[
\E(FG)\geq (\E F)(\E G).
\]
\end{lem}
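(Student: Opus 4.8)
The plan is to deduce the inequality from the classical Harris (FKG) correlation inequality for finitely many independent random variables, via a discretisation of the type sketched in the Introduction; throughout, recall from Remark~\ref{r:talich} that $D_xF(\xi)\ge 0$ is assumed for \emph{every} $x\in\X$ and \emph{every} $\xi\in\OX$, and that $(x,\xi)\mapsto D_xF(\xi)$ is jointly measurable. First I would reduce to the case $\mu(\X)<\infty$. Fix an increasing sequence $A_n\uparrow\X$ in $\mathcal X_0$ and set $\mathcal F_n:=\sigma(\eta(B):B\subseteq A_n)$, so that $\mathcal F_n\uparrow\sigma(\eta)$ up to $\P$-null sets. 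Then $F_n:=\E[F\mid\mathcal F_n]$ and $G_n:=\E[G\mid\mathcal F_n]$ are uniformly bounded martingales converging to $F$ and $G$ in $L^2$, whence $\E[F_nG_n]\to\E[FG]$, while $\E[F_n]=\E F$ and $\E[G_n]=\E G$. Writing $\eta=\eta|_{A_n}+\eta|_{A_n^c}$ with independent summands, $F_n$ is $\P$-a.s. a measurable function of $\eta|_{A_n}$, and for $x\in A_n$ one has $D_xF_n=\E[D_xF(\,\cdot+\eta|_{A_n^c})]\ge 0$; likewise for $G_n$. Hence it suffices to prove the lemma when $\mu(\X)<\infty$.

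Assume now $\mu(\X)<\infty$. Choose a nested sequence of finite measurable partitions $\pi_k=\{B^k_1,\dots,B^k_{m_k}\}$ of $\X$ with $\bigvee_k\sigma(\pi_k)=\sigma(\eta)$ up to null sets (possible since $\mathcal X$ is countably generated; discard $\mu$-null cells), and put $N_k:=(\eta(B^k_1),\dots,\eta(B^k_{m_k}))$, a vector of independent Poisson variables. The functionals $F_k:=\E[F\mid N_k]$ and $G_k:=\E[G\mid N_k]$ are Borel functions of $N_k$, uniformly bounded, and converge to $F$ and $G$ in $L^2$ by martingale convergence, so $\E[F_kG_k]\to\E[FG]$. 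The crucial point is that $F_k$ is \emph{coordinatewise non-decreasing} in $N_k$: conditionally on $N_k=(n_1,\dots,n_{m_k})$, the process $\eta$ has the law of $\sum_i\sum_{j\le n_i}\delta_{U_{i,j}}$ with the $U_{i,j}$ independent and $U_{i,j}\sim\mu|_{B^k_i}/\mu(B^k_i)$, so raising $n_i$ by one replaces this configuration by itself plus one further independent point $\delta_V$, $V\sim\mu|_{B^k_i}/\mu(B^k_i)$, and the ensuing increment of $F_k$ equals $\E[D_VF(\Xi)]\ge 0$, where $\Xi$ is the conditional configuration; the same holds for $G_k$. The classical Harris inequality (the one-dimensional correlation inequality tensorised over the independent coordinates) then yields $\E[F_kG_k]\ge\E[F_k]\,\E[G_k]=\E F\cdot\E G$ for every $k$, and letting $k\to\infty$ gives $\E[FG]\ge(\E F)(\E G)$.

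I expect the main obstacle to be organisational rather than conceptual: one must carefully justify the two approximation steps — that the conditional expectations stay uniformly bounded, converge in $L^2$, and retain a non-negative add-one cost operator (equivalently, become coordinatewise non-decreasing after discretisation) — together with the standard description of the conditional law of a Poisson process given the cell counts. Once these are in place, the correlation inequality in one dimension and its tensorisation to a product of totally ordered probability spaces are entirely classical and use no structure specific to Poisson measures. (An alternative, shorter route is the covariance identity via the Ornstein--Uhlenbeck semigroup, which preserves monotonicity; but, consistently with the philosophy of this paper, I would rather avoid the semigroup and extended probability spaces here.)
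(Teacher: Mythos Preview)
Your argument is sound, but note that the paper does not actually prove Lemma~\ref{FKG}: it is simply quoted as a known result, with references to \cite[Lemma~2.1]{J_1984} and \cite[Theorem~1.4]{LP_2011}. So there is no proof in the paper to compare against.

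That said, your discretisation-and-Harris approach is a legitimate self-contained proof, and it is amusing that it follows precisely the strategy sketched (and deliberately set aside) in Section~1.4 for the concentration estimates themselves. Two minor points you should tighten. First, in the second step you assert $\bigvee_k\sigma(N_k)=\sigma(\eta)$ up to null sets; this requires a monotone-class argument showing that the cell counts determine $\eta(B)$ for every $B\in\mathcal X$, which follows from the nested partitions generating $\mathcal X$---your parenthetical remark alludes to this, but it deserves one explicit sentence. Second, when you argue that raising $n_i$ by one increases $F_k$ via $\E[D_VF(\Xi)]\ge0$, you are implicitly using that $D_xF(\xi)\ge0$ holds for \emph{every} $\xi\in\OX$, including configurations with multiplicities (which can arise when $\mu$ has atoms, a case the paper explicitly allows in Section~\ref{s:frame}); this is covered by the hypothesis as stated, but is worth flagging since it is exactly why Remark~\ref{r:talich} insists on the ``for all $(x,\xi)$'' formulation.
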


It was also remarked in \cite{J_1984} that under conditions like $F,G\geq 0$ or $\E F^2, \E G^2 < \infty$, the above result easily extends to unbounded functionals by monotone convergence. For our purpose we need the following extension.

\begin{cor} \label{FKGCor}
Let $F, G \geq 0$ be Poisson functionals. Assume that $F$ is bounded and $G$ is integrable. Moreover, assume that
\begin{align*}
D_xF(\xi)\leq 0 \ \ \text{and} \ \ D_xG(\xi)\geq 0 \ \ \text{for all} \ \ (x,\xi)\in\X\times\OX.
\end{align*}
Then
\[
\E(FG)\leq (\E F)(\E G).
\]
\end{cor}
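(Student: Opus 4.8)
The plan is to reduce everything to the FKG inequality of Lemma \ref{FKG} by complementing $F$ and truncating $G$. Since $F$ is a bounded Poisson functional, fix $M>0$ such that $0\le F\le M$ and set $\tilde F := M-F$. Then $\tilde F$ is again a bounded, non-negative Poisson functional, and $D_x\tilde F(\xi) = -D_xF(\xi)\ge 0$ for all $(x,\xi)\in\X\times\OX$. To handle the possibly unbounded $G$, put $G_n := \min(G,n)$ for $n\in\N$; each $G_n$ is bounded and non-negative. Moreover, $D_xG(\xi)\ge 0$ forces $G(\xi+\delta_x)\ge G(\xi)$, and applying the non-decreasing map $t\mapsto\min(t,n)$ to both sides yields $G_n(\xi+\delta_x)\ge G_n(\xi)$, i.e. $D_xG_n(\xi)\ge 0$ for all $(x,\xi)$.

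Next I would apply Lemma \ref{FKG} to the pair $\tilde F$, $G_n$, both of which are bounded Poisson functionals with non-negative difference operators, to obtain
\[
\E(\tilde F\,G_n)\ge (\E\tilde F)(\E G_n).
\]
Substituting $\tilde F = M-F$ and expanding gives $M\,\E G_n - \E(F\,G_n)\ge (M-\E F)\,\E G_n$, and after cancelling $M\,\E G_n$ (which is finite) this simplifies to $\E(F\,G_n)\le (\E F)(\E G_n)$ for every $n$.

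Finally I would let $n\to\infty$. Since $G_n\uparrow G$ and $F\ge 0$, we have $F\,G_n\uparrow F\,G$, so monotone convergence yields $\E(F\,G_n)\to\E(F\,G)$ and $\E G_n\to\E G$; note that $\E(FG)\le M\,\E G<\infty$ because $G$ is integrable, so the limit is finite. Passing to the limit in $\E(F\,G_n)\le (\E F)(\E G_n)$ gives $\E(FG)\le (\E F)(\E G)$, as claimed. There is essentially no serious obstacle here: the only points requiring a line of justification are that truncation preserves the monotonicity of $G$ (so that $D_xG_n\ge 0$) and that monotone convergence is legitimate, both of which are immediate from the hypotheses $F\ge 0$, $F$ bounded, and $G$ integrable.
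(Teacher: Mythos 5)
Your proof is correct and follows essentially the same route as the paper: truncate $G$ by $G_n=\min(G,n)$, apply the FKG inequality of Lemma \ref{FKG}, and pass to the limit by monotone convergence. The only difference is that you make explicit the complementation $\tilde F = M-F$ needed to turn the non-increasing/non-decreasing pair into the setting of Lemma \ref{FKG}, a step the paper's proof leaves implicit; this is a welcome clarification rather than a deviation.
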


\begin{proof}
Since $F$ is bounded, it follows from $\E G<\infty$ that also $\E(FG) < \infty$. Now consider for any $n\in\N$ the truncations $G_n = \min(G, n)$. Then we have almost surely $G_n\to G$ and $F G_n \to FG$ as $n\to \infty$. By monotone convergence, $\E G_n\to \E G$ and $\E(F G_n)\to\E(FG)$ as $n\to\infty$. It follows from Lemma \ref{FKG} that for any $n\in\N$,
\begin{align*}
\E(F G_n) \leq (\E F)(\E G_n).
\end{align*}
The result follows.
\end{proof}
\pagebreak

The following proof is inspired by ideas from the proof of \cite[Theorem 6]{BLM_2003}.

\begin{proof}[Proof of Theorem \ref{LowerDevIneq}]
For any $n\in\N$ consider the truncations
\begin{align*}
F_n = \min(\max(F, -n), n).
\end{align*}
Then the $F_n$ are again non-decreasing. Let $\lambda < 0$. It follows from Proposition \ref{EntIneq} with $I = \X\times\OX$ that for any $u\in[\lambda,0)$ we have
\begin{align*}
\Ent(e^{u F_n}) \leq \E \left[e^{u F_n} \int_{\X} \psi(u D_x F_n) \ d\mu(x)\right].
\end{align*}
Since $\psi(-z) \leq (1/2)z^2$ for $z\geq 0$, the right hand side of the above expression does not exceed
\begin{align*}
\frac{1}{2}\E \left[ e^{u F_n} \int_{\X} (u D_x F_n)^2 \ d\mu(x)\right] = \frac{1}{2} u^ 2 \ \E (e^{u F_n} V^-(F_n)).
\end{align*}
We have $V^-(F_n)\leq V^-$ almost surely, hence $\E(e^{uF_n}V^-(F_n))$ in the above display can be upper bounded by $\E(e^{uF_n}V^-)$. Now, since $F_n$ is non-decreasing and $u<0$, the functional $e^{u F_n}$ is non-increasing and bounded. Moreover, by assumption the functional $V^-$ is non-decreasing and $\E V^- <\infty$. Hence, by Corollary \ref{FKGCor} we have
\begin{align*}
\E (e^{u F_n} V^-) \leq \E (e^{u F_n})  \ \E V^-.
\end{align*}
It follows that
\[
h'(u)\leq \frac{1}{2}\E V^- \ \ \text{where} \ \ h(u) = \frac{1}{u} \log\E(e^{u F_n}).
\]
Integrating from $\lambda$ to $0$ yields
\begin{align*}
\log \E[\exp(\lambda(F_n-\E F_n))] \leq \frac{1}{2}\lambda^2 \E V^-.
\end{align*}
Since $\E V^- < \infty$, by Lemma \ref{ExpLem} we have $\E |F|<\infty$. Thus, applying the Theorem of de la Vall\'ee-Poussin similarly as in the proof of Theorem \ref{EntIneqs}, we conclude that the inequality in the last display also holds for the random variable $F$. Using Markov's inequality and optimizing in $\lambda$ yields the result.
\end{proof}

To prove the statement of Theorem \ref{vPlusLowerThm} for bounded $F$, we adapt the proof of the product space version \cite[Theorem 13]{M_2006}. To extend the result to unbounded $F$, Lemma \ref{truncLem} is used similarly as it was done in the proof of Theorem \ref{arbSignThm}.

\begin{proof}[Proof of Theorem \ref{vPlusLowerThm}]
First consider the case when $F$ is bounded. Let $\lambda < 0 $ and $u\in[\lambda,0)$. Then by Proposition \ref{EntIneq} {with $I=\emptyset$} we have
\begin{align*}
\Ent(e^{u F}) &\leq \E \left[ e^{u F}\int_\X \phi(-u D_x F(\eta-\delta_x)) d\eta(x)\right].
\end{align*}
Moreover, since $0\leq D F \leq 1$, we have $-u D_x F(\eta-\delta_x) \leq -u$ and since the map $z\mapsto \phi(z)/z^2$ is increasing, this implies
\begin{align*}
\int_\X \phi(-u D_x F(\eta-\delta_x)) d\eta(x) &=u^2 \int_\X \frac{\phi(-u D_x F(\eta-\delta_x))}{u^2 D_x F(\eta-\delta_x)^2} D_x F(\eta-\delta_x)^2 d\eta(x)\\
&\leq u^2 \int_\X \frac{\phi(-u) }{u^2} D_x F(\eta-\delta_x)^2 d\eta(x).
\end{align*}
Now, since $V^+ \leq aF$, we obtain
\begin{align*}
\Ent(e^{u F}) \leq \phi(-u) \E(e^{u F} V^+) \leq \phi(-u) a \E(Fe^{u F}).
\end{align*}
Dividing by $u^2\E(e^{u F})$ and integrating from $\lambda$ to $0$ yields
\begin{align*}
\E F - \frac{1}{\lambda} \log\E(e^{\lambda F}) \leq -\frac{\phi(-\lambda)}{\lambda^ 2} a \log\E(e^ {\lambda F}).
\end{align*}
{Since $1-a\phi(-\lambda)/\lambda>1$}, this can be rearranged as
\begin{align*}
\log\E[\exp(\lambda(F-\E F))] \leq \lambda^2 \frac{a\phi(-\lambda)/\lambda^2}{1- a\phi(-\lambda)/\lambda} \E F.
\end{align*}
Similarly as in the proof of Theorem \ref{arbSignThm}, the above inequality can be extended to the case when $F$ is unbounded. Here one should notice that according to Corollary \ref{DevIneq2}, the condition $V^+\leq aF$ guarantees $\E|F|<\infty$. It was pointed out in \cite{M_2006} that for any $\lambda<0$,
\begin{align*}
\frac{a\phi(-\lambda)/\lambda^2}{1- a\phi(-\lambda)/\lambda} \leq \frac{\max(a,1)}{2}.
\end{align*}
Markov's inequality now gives
\begin{align*}
\P(F\leq \E F - r) \leq \E(e^{\lambda(F-\E F)})e^{\lambda r} \leq \exp\left(\lambda^2 \frac{\max(a,1)}{2} \E F + \lambda r\right).
\end{align*}
Optimizing in $\lambda$ concludes the proof.
\end{proof}

\section{Applications to U-Statistics} \label{AppUstat}
{
\subsection{General remarks} The aim of the present section is to investigate the concentration properties of Poisson U-statistics. For this purpose, we need to specialize the very general framework that was in order so far. Throughout this section, the intensity measure $\mu$ on the space $\X$ is assumed to be non-atomic, that is, $\{x\}\in \mathcal{X}$ and $\mu(\{x\}) = 0$, for every $x\in \X$. This assumption is equivalent to the fact that the Poisson process $\eta$ on $\X$ is \emph{simple}, meaning that almost surely $\eta(\{x\})\leq 1$ for all $x\in\X$. It is common practice in this setting to identify the simple point process $\eta$ with its support, which now corresponds to a random set in $\X$. Plainly, the integral of a map $f:\X\to\R$ with respect to $\eta$ is now exactly given by the (possibly infinite) sum
\begin{align*}
\int_\X f \ d\eta = \sum_{x\in\eta} f(x).
\end{align*}

\begin{rem} \label{representativeRem}
Consider a Poisson functional $F$ together with some representative $\mathfrak{f}:\OX\to\R$. For any $\xi\in\OX$, we denote by $[\xi]$ the integer-valued measure uniquely determined by its value on singletons via the relation $[\xi](\{x\}) = \ind\{\xi(\{x\})>0\}$, for all $x\in\X$. Then, since $\eta$ is simple, we have that almost surely $F = \mathfrak{f}([\eta])$. It follows that another representative of $F$ is given by $\mathfrak{f}':\OX\to\R$, where $\mathfrak{f}'(\xi) = \mathfrak{f}([\xi])$. Therefore, without loss of generality, we can assume that $F(\xi)=F([\xi])$ for all $\xi\in\OX$, that is, given an arbitrary functional $\xi \mapsto F(\xi)$, in this section we will systematically select a representative of $F$ that only depends on $\xi$ via the mapping $\xi\mapsto [\xi]$. With this convention, one has that $D_xF(\xi) = D_x F([\xi])$, and also that $D_x F(\xi) = 0$ whenever $\xi(\{x\})>0$. Finally we observe that, again by virtue of the above convention and in accordance with the content of Remark \ref{r:talich}, the fact that the quantity $D_xF(\xi)$ verifies some property $\mathcal{P}$ for every $x\in \X$ and every $\xi \in {\bf N}$ is equivalent to the fact that $\mathcal{P}$ is verified for all $(x,\xi)\in \X\times\OX$ such that $\xi$ charges each singleton with a mass at most equal to 1.
\end{rem}

We now recall some relevant definitions. Let $f:\X^k \to \R_{\geq 0}$ be a symmetric measurable map and define the functional $S_f:\OX\to[0,\infty]$ by
\begin{align}\label{e:1}
S_f(\xi) = \sum_{\x\in\xi_{\neq}^k} f(\x).
\end{align}
A \emph{(Poisson) U-statistic} $F$ of order $k$ with kernel $f$ is a random variable such that almost surely $F = S_f(\eta)$. According to the Slyvniak-Mecke formula \eqref{e:smecke}, the expectation of a U-statistic $F$ is given by
$$
E[F] = \int_\X\cdots \int_\X f(x_1,...,x_m) d\mu(x_1) \cdots d\mu(x_m);
$$
see e.g. \cite[Section 3]{RS_2013} for more details as well as for an introduction to U-statistics with kernels that may have arbitrary sign.

\subsection{Choice of a representative} \label{s:representative}
In order to apply results from Section \ref{DIPF} to a Poisson U-statistic $F$ with kernel $f\geq 0$, we first need to choose a suitable representative of $F$ as defined in Section \ref{s:frame}. Whenever the considered U-statistic $F$ is almost surely finite, we can choose as a representative of $F$ the map $\mathfrak{f}:\OX\to\R$ defined by $\mathfrak{f}(\xi)=S_f(\xi)$ if $S_f(\xi)<\infty$ and $\mathfrak{f}(\xi)=0$ if $S_f(\xi)=\infty$.

In order to avoid technical problems arising from the choice of this representative, we will often assume that a given U-statistic with kernel $f$ is \emph{well-behaved}. By this we mean that there exists a measurable set $B\subseteq \OX$ with $\P(\eta\in B)=1$, such that
\begin{enumerate}
\item $S_f(\xi)<\infty$ for all $\xi\in B$,
\item $\xi+\delta_x\in B$ whenever $\xi\in B$ and $x\in\X$,
\item $\xi-\delta_x\in B$ whenever $\xi\in B$ and $x\in\xi$.
\end{enumerate}
If $F$ is well-behaved, then we will choose as a representative of $F$ the map $\mathfrak{f}:\OX\to \R$ defined by
$\mathfrak{f}(\xi) = S_f(\xi)$ if $\xi\in B$ and $\mathfrak{f}(\xi)=0$ if $\xi\in B^c$. Then, for any $(x,\xi)\in\X\times\OX$ one has $D_xF(\xi) = S_f(\xi+\delta_x) - S_f(\xi) < \infty$ if $\xi\in B$ and $D_xF(\xi) = 0$ if $\xi\in B^c$.

Note that by virtue of (\ref{xiNeq}) and (\ref{e:1}), the above choices of a representative imply $F(\xi) = F([\xi])$ for all $\xi\in\OX$ which is consistent with Remark \ref{representativeRem}. Finally, note that U-statistics that arise in typical applications (in particular, all $U$-statistics considered in this paper) are usually well-behaved in the sense described above.

\subsection{General results}
We will use an explicit expression for the difference operator of a U-statistic that was established in \cite{RS_2013}.
The following result gathers together several results from \cite[Lemma 3.3 and Theorem 3.6]{RS_2013}, in a form that is adapted to our setting.

\begin{prop} \label{DUstatProp} Let the above assumptions and notation prevail, let $F$ be a U-statistic with non-negative kernel $f$ and let $S_f$ be as in (\ref{e:1}). Then, for any $\xi\in\OX$ and $x\in\xi$, one has
\[
S_f(\xi)-S_f(\xi-\delta_x) = k F(x,\xi) \ \ \text{whenever} \ \ S_f(\xi)<\infty,
\]
where, for any $\xi \in {\bf N}$ and every $x\in\xi$ such that $\xi(\{x\})=1$, the \emph{local version} of $F$ is defined as
\begin{align}\label{e:genug}
F(x,\xi) := \sum_{\y\in(\xi\setminus x)^{k-1}_{\neq}} f(x,\y),
\end{align}
where $\xi\setminus x$ is shorthand for the set obtained by deleting $x$ from the support of $\xi$, and $F(x,\xi)=0$ whenever $\xi(\{x\})>1$. Moreover, if $\E F^2<\infty$, then $f\in  L^1(\mu^k)\cap L^2(\mu^k)$.
\end{prop}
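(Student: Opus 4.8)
The plan is to prove Proposition \ref{DUstatProp} in three parts: first the local-version formula for the difference operator, then the expectation-free combinatorial identity it rests on, and finally the integrability statement $\E F^2<\infty \Rightarrow f\in L^1(\mu^k)\cap L^2(\mu^k)$.

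\textbf{Step 1: the add-one / remove-one cost formula.} Fix $\xi\in\OX$ with $S_f(\xi)<\infty$ and $x\in\xi$ with $\xi(\{x\})=1$ (recall $\eta$ is simple, so by the convention of Remark \ref{representativeRem} we only need to treat this case; the case $\xi(\{x\})>1$ gives $F(x,\xi)=0$ by definition and the formula is vacuous under our representative). The key is the set-theoretic decomposition
\[
(\xi)_{\neq}^k = (\xi-\delta_x)_{\neq}^k \ \cup\ \{\x\in\xi_{\neq}^k : x_i = x \text{ for exactly one } i\},
\]
a disjoint union. Summing $f$ over the second piece, using symmetry of $f$ to move the coordinate equal to $x$ into the first slot, produces exactly $k\sum_{\y\in(\xi\setminus x)^{k-1}_{\neq}} f(x,\y) = kF(x,\xi)$; the factor $k$ counts the choice of which coordinate equals $x$. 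Hence $S_f(\xi) - S_f(\xi-\delta_x) = kF(x,\xi)$. All sums involved are sub-sums of the convergent sum $S_f(\xi)$, so no convergence issue arises. (One may also record the dual statement $D_xS_f(\xi) = S_f(\xi+\delta_x)-S_f(\xi) = kF(x,\xi+\delta_x)$ when $\xi(\{x\})=0$, obtained by applying the displayed identity at $\xi+\delta_x$.)

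\textbf{Step 2: integrability.} Assume $\E F^2<\infty$. For $f\in L^1(\mu^k)$: by the Slivnyak--Mecke formula \eqref{e:smecke}, $\E F = \int_{\X^k} f\, d\mu^k$, and since $f\geq0$ this integral equals $\E F \leq (\E F^2)^{1/2}<\infty$, so $f\in L^1(\mu^k)$. For $f\in L^2(\mu^k)$: apply \eqref{e:smecke} with $m=k$ to the non-negative function $H(\xi,\x) = f(\x)^2$ (or more precisely $H(\xi, x_1,\dots,x_k) = f(x_1,\dots,x_k)f(x_1,\dots,x_k)$), so that
\[
\E\Big[\sum_{\x\in\eta_{\neq}^k} f(\x)^2\Big] = \int_{\X^k} f(\x)^2 \, d\mu^k(\x) = \|f\|_{L^2(\mu^k)}^2 .
\]
Thus it suffices to show the left-hand side is finite, i.e. that $\sum_{\x\in\eta_{\neq}^k} f(\x)^2 \leq F^2$ almost surely on the event $\{S_f(\eta)<\infty\}$ (which has probability one). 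But this is elementary: for non-negative reals $a_\x$ indexed by $\x\in\eta_{\neq}^k$ one has $\sum_\x a_\x^2 \leq \big(\sum_\x a_\x\big)^2$, applied with $a_\x = f(\x)$. Taking expectations gives $\|f\|_{L^2(\mu^k)}^2 \leq \E F^2 <\infty$.

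\textbf{Main obstacle.} The combinatorics of Step 1 is the substantive point, but it is genuinely routine once one writes the partition of $\xi_{\neq}^k$ by the number of coordinates equal to $x$ (which is $0$ or $1$ since $\xi$ is simple) and invokes symmetry of $f$; the factor $k$ is the only thing to get right. The one genuinely delicate point is bookkeeping around the chosen representative and the finiteness hypothesis $S_f(\xi)<\infty$: one must make sure all rearrangements of sums are legitimate (they are, as sub-sums of an absolutely convergent non-negative sum) and that the statement matches the conventions of Section \ref{s:representative} and Remark \ref{representativeRem} — in particular that $D_xF(\xi)=0$ when $\xi(\{x\})>0$ under our representative, so the remove-one formula is really a statement about $S_f$, not $F$, on the right inputs. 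Since everything ultimately reduces to Mecke \eqref{e:smecke} plus the trivial inequality $\sum a_i^2\le(\sum a_i)^2$ for $a_i\ge0$, I expect no serious difficulty, only care with the setup.
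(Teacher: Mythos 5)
Your argument is correct, but it is worth pointing out that the paper itself does not prove this proposition at all: it is presented as a repackaging of Lemma 3.3 and Theorem 3.6 of the cited Reitzner--Schulte reference, where the difference-operator formula comes from essentially the same tuple-counting you perform, while the kernel integrability is extracted from the Wiener--It\^o chaos decomposition of square-integrable Poisson U-statistics. Your proof is therefore a genuinely self-contained alternative, and the $L^2$-part is more elementary than the cited route: instead of chaos expansions you simply apply the Slivnyak--Mecke formula \eqref{e:smecke} with $H(\xi,\mathbf{x})=f(\mathbf{x})^2$ and the inequality $\sum_{\mathbf{x}} f(\mathbf{x})^2\leq\bigl(\sum_{\mathbf{x}} f(\mathbf{x})\bigr)^2$ for non-negative terms, which immediately gives $\lVert f\rVert_{L^2(\mu^k)}^2\leq \E F^2$; the $L^1$-claim is likewise immediate from \eqref{e:smecke} and Cauchy--Schwarz. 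The combinatorial step is also fine: for $\xi(\{x\})=1$ the partition of $\xi_{\neq}^k$ according to whether a tuple avoids $x$ or contains it in exactly one coordinate (distinctness forbids more), together with symmetry of $f$, yields the factor $k$, and non-negativity of $f$ plus $S_f(\xi)<\infty$ justifies the rearrangement. One small wording correction: in the case $\xi(\{x\})>1$ the identity is not ``vacuous'' --- since $S_f$ depends on $\xi$ only through its support, removing one unit of mass at $x$ leaves $\xi_{\neq}^k$ unchanged, so the left-hand side is $0$ and the identity reads $0=k\cdot 0$, consistent with the convention $F(x,\xi)=0$; you should state this rather than dismiss it. With that cosmetic fix, your proof stands on its own and in fact gives a cleaner derivation of the last assertion than the reference the paper leans on.
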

{
As a direct consequence of the above result together with our canonical choices of a representative, described in Section \ref{s:representative}, we obtain:
\begin{cor}\label{DCor}
Let $F$ be a U-statistic with non-negative kernel $f$. Then the following statements hold:
\begin{enumerate}
\item If $F$ is almost surely finite, then there exists a measurable set $B\subseteq \OX$ that satisfies $\P(\eta\in B)=1$ such that for any $\xi\in B$ and $x\in\xi$, the local version $F(x,\xi)$ is finite and
\begin{align*}
D_xF(\xi - \delta_x) = k F(x,\xi).
\end{align*}
\item If $F$ is well-behaved, then there exists a measurable set $B\subseteq \OX$ that satisfies $\P(\eta\in B)=1$ such that the following holds:
\medskip

\begin{enumerate}
\item For any $\xi\in B$, $x\in\xi$ and $z\in\X$, the local versions $F(x,\xi)$ and $F(z,\xi+\delta_z)$ are finite, and moreover
\begin{align*}
D_xF(\xi-\delta_x) = k F(x,\xi) \ \ \text{and} \ \ D_zF(\xi) = kF(z,\xi+\delta_z).
\end{align*}

\item For any $\xi\in B^c$, $x\in\xi$ and $z\in\X$, one has
\begin{align*}
D_xF(\xi-\delta_x) = 0 = D_zF(\xi).
\end{align*}
\end{enumerate}
\end{enumerate}
\end{cor}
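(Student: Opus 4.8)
The plan is to read off Corollary~\ref{DCor} from Proposition~\ref{DUstatProp}, the extra input being nothing more than a careful accounting of the canonical representatives fixed in Section~\ref{s:representative}, together with the trivial facts that $(\xi+\delta_z)-\delta_z=\xi$ and $(\xi-\delta_x)+\delta_x=\xi$ in $\OX$. There is no new idea to introduce: the whole matter is tracking on which configurations the chosen representative agrees with $S_f$ and on which it has been set to $0$.

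For part~(i), where $F$ is almost surely finite and $\mathfrak f(\xi)=S_f(\xi)\ind\{S_f(\xi)<\infty\}$, I would take $B=\{\xi\in\OX:S_f(\xi)<\infty\}$; since $F=S_f(\eta)$ almost surely and $F$ is almost surely finite, $\P(\eta\in B)=1$. Fix $\xi\in B$ and $x\in\xi$. Then $D_xF(\xi-\delta_x)=F(\xi)-F(\xi-\delta_x)$, and since $f\geq 0$ and the support of $\xi-\delta_x$ is contained in that of $\xi$ we get $S_f(\xi-\delta_x)\leq S_f(\xi)<\infty$, so both $F(\xi)$ and $F(\xi-\delta_x)$ coincide with the corresponding values of $S_f$. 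Proposition~\ref{DUstatProp} (applicable because $S_f(\xi)<\infty$) then gives $F(x,\xi)<\infty$ and $D_xF(\xi-\delta_x)=S_f(\xi)-S_f(\xi-\delta_x)=kF(x,\xi)$.

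For part~(ii), let $B$ be the set furnished by the definition of a well-behaved U-statistic: $\P(\eta\in B)=1$, $S_f<\infty$ on $B$, $B$ is stable under adding an arbitrary point (property~(ii)) and under deleting a support point (property~(iii)), and $\mathfrak f$ equals $S_f$ on $B$ and $0$ on $B^c$. On $B$: if $\xi\in B$, $x\in\xi$, $z\in\X$, stability yields $\xi-\delta_x,\xi+\delta_z\in B$, so $F$ agrees with $S_f$ at $\xi$, $\xi-\delta_x$ and $\xi+\delta_z$; Proposition~\ref{DUstatProp} applied to $\xi$ at $x$ gives $D_xF(\xi-\delta_x)=kF(x,\xi)$, and applied to $\xi+\delta_z$ at $z$ (using $(\xi+\delta_z)-\delta_z=\xi$, and noting that Proposition~\ref{DUstatProp} already covers the case where $z$ has multiplicity $\geq 2$ in $\xi+\delta_z$, where both sides vanish) gives $D_zF(\xi)=kF(z,\xi+\delta_z)$, both local versions being finite. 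On $B^c$: if $\xi\in B^c$ then $\xi+\delta_z\in B^c$ (else property~(iii) applied to $\xi+\delta_z$ at $z$ would force $\xi\in B$), and for $x\in\xi$ also $\xi-\delta_x\in B^c$ (else property~(ii) applied to $\xi-\delta_x$ would force $\xi\in B$); hence $F$ is $0$ at $\xi$, at $\xi-\delta_x$ and at $\xi+\delta_z$, so $D_xF(\xi-\delta_x)=0=D_zF(\xi)$.

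Nothing here is an obstacle in the usual sense; the two spots worth double-checking are the finiteness $S_f(\xi-\delta_x)<\infty$ in part~(i), which is not part of Proposition~\ref{DUstatProp} and has to come from $f\geq 0$, and the implications $\xi\in B^c\Rightarrow\xi\pm\delta\in B^c$ in part~(ii), where one must invoke the two closure properties of $B$ in the correct direction so that the $B^c$ branch of the representative stays consistent. As throughout this section, everything is phrased for configurations charging each singleton at most once, which is harmless by Remark~\ref{representativeRem} and the fact that $\eta$ is simple.
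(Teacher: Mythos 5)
Your proposal is correct and is exactly the argument the paper intends: Corollary \ref{DCor} is stated there as a direct consequence of Proposition \ref{DUstatProp} together with the canonical representatives of Section \ref{s:representative}, and your write-up simply makes that verification explicit (the choice $B=\{S_f<\infty\}$ in part (i), the stability properties of $B$ in both directions for part (ii), and the multiplicity convention $F(z,\xi+\delta_z)=0$ when $(\xi+\delta_z)(\{z\})>1$). No gaps.
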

}

The previous Corollary \ref{DCor} implies that, if $F$ is an almost surely finite U-statistic with kernel $f\geq 0$, then almost surely
\begin{align*}
V^+ &= k^2 \sum_{x\in \eta} F(x,\eta)^2.
\end{align*}
If $F$ is in addition well-behaved, then almost surely
\begin{align*}
V^- &= k^2 \int_\X F(x,\eta+\delta_x)^2 d \mu(x).
\end{align*}
We have therefore the following consequences of Corollary \ref{DevIneq2} and Theorem \ref{LowerDevIneq}.
\begin{cor}
Consider an almost surely finite U-statistic $F$ of order $k$ with non-negative kernel $f$. Assume that for some $\alpha\in[0,2)$ and $c>0$ we have almost surely
\begin{align*}
\sum_{x\in \eta} F(x,\eta)^2 \leq c F^\alpha.
\end{align*}
Then $F$ is integrable and for all $r> 0$,
\begin{align*}
\P(F\geq\E F + r) \leq \exp\left(-\frac{((r+\E F)^{1-\alpha/2} - (\E F)^{1-\alpha/2})^2}{2ck^2}\right).
\end{align*}
\end{cor}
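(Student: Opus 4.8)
The plan is to deduce the statement directly from Corollary~\ref{DevIneq2}, the only task being to rewrite the hypothesis on $\sum_{x\in\eta}F(x,\eta)^2$ as a bound of the form $V^+\leq(\mathrm{const})\,F^\alpha$ and to check the sign condition $F\geq 0$.

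First I would observe that, since the kernel $f$ is non-negative, the functional $S_f$ from \eqref{e:1} takes values in $[0,\infty]$; hence the canonical representative of $F$ selected in Section~\ref{s:representative} (equal to $S_f(\xi)$ when finite and to $0$ otherwise) is non-negative on all of $\OX$, so $F\geq 0$ in the sense required by Corollary~\ref{DevIneq2}. Next, because $F$ is assumed almost surely finite, Corollary~\ref{DCor}(i) furnishes a measurable set $B$ with $\P(\eta\in B)=1$ on which $D_xF(\xi-\delta_x)=kF(x,\xi)\geq 0$ for every $x\in\xi$, while $D_xF\geq 0$ everywhere for a U-statistic with non-negative kernel. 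Plugging these facts into the definition of $V^+=V_0^+$ makes the $d\mu$-integral vanish (there $D_x^{\leq 0}F(\eta)=0$) and turns the $d\eta$-integral into a sum, giving
\[
V^+=\int_\X (D_x^{>0}F(\eta-\delta_x))^2\,d\eta(x)=k^2\sum_{x\in\eta}F(x,\eta)^2
\]
almost surely --- this is precisely the identity recorded in the paragraph following Corollary~\ref{DCor}.

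Then I would invoke the hypothesis $\sum_{x\in\eta}F(x,\eta)^2\leq cF^\alpha$: multiplying by $k^2$ and using the previous display yields $V^+\leq ck^2F^\alpha$ almost surely. Thus $F$ fulfils the assumptions of Corollary~\ref{DevIneq2} with the same exponent $\alpha\in[0,2)$ and with the constant $c$ there replaced by $ck^2$; that corollary then gives the integrability of $F$ together with the asserted tail bound, carrying $2ck^2$ in the denominator of the exponent. There is no genuine obstacle here: the argument is pure bookkeeping, and the only point requiring a little care is reconciling the representative-dependent conventions of Section~\ref{s:representative} and Remark~\ref{representativeRem} with the formula for $V^+$ supplied by Corollary~\ref{DCor}, so that Corollary~\ref{DevIneq2} applies verbatim.
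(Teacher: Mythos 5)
Your argument is correct and coincides with the paper's (implicit) proof: the corollary is stated there as a direct consequence of the identity $V^+ = k^2\sum_{x\in\eta}F(x,\eta)^2$, recorded right after Corollary \ref{DCor}, combined with Corollary \ref{DevIneq2} applied with constant $ck^2$. The only cosmetic caveat is that for a merely almost surely finite (not well-behaved) U-statistic the chosen representative need not satisfy $D_xF\geq 0$ literally everywhere, but only for $\mu$-almost every $x$ on an event of full probability, which is exactly what is needed for the $d\mu$-term of $V^+$ to vanish almost surely and is the same level of detail at which the paper states the identity.
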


\begin{cor} \label{LowerDevIneqUStat}
Consider a well-behaved U-statistic $F$ of order $k$ with non-negative kernel $f$. Assume that
\begin{align}\label{e:2}
V := \E \int_\X F(x,\eta+\delta_x)^2 d \mu(x)<\infty.
\end{align}
Then, for all $r> 0$ we have
\begin{align*}
\P(F\leq\E F -r) \leq \exp\left(-\frac{r^2}{2k^2V}\right).
\end{align*}
\end{cor}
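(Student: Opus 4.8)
The plan is to obtain this statement directly from Theorem~\ref{LowerDevIneq}, using the explicit description of the difference operator of a U-statistic supplied by Proposition~\ref{DUstatProp} and Corollary~\ref{DCor}. Throughout, I would work with the canonical representative of $F$ fixed in Section~\ref{s:representative}, together with the associated measurable set $B\subseteq\OX$ with $\P(\eta\in B)=1$ of Corollary~\ref{DCor}(2); recall that, by well-behavedness, $B$ (and hence also its complement) is stable under the addition and the removal of a point. The first point to check is that $F$ is non-decreasing in the strong, pointwise sense required by Theorem~\ref{LowerDevIneq}, namely $D_zF(\xi)\geq 0$ for \emph{every} $(z,\xi)\in\X\times\OX$: for $\xi\in B$ one has $D_zF(\xi)=kF(z,\xi+\delta_z)=k\sum_{\y\in\xi^{k-1}_{\neq}}f(z,\y)\geq 0$ since $f\geq 0$ (with value $0$ in the degenerate case $z\in\xi$), while for $\xi\in B^c$ one has $D_zF(\xi)=0$. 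Since $F$ is non-decreasing, $V^-=V_0^-$ collapses to $\int_\X(D_xF(\eta))^2\,d\mu(x)$, which yields the almost sure identity $V^-=k^2\int_\X F(x,\eta+\delta_x)^2\,d\mu(x)$ already recorded above; in particular $\E V^-=k^2V<\infty$ by hypothesis.

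Next I would verify the monotonicity of $V^-$, i.e.\ $D_zV^-(\xi)\geq 0$ for all $(z,\xi)\in\X\times\OX$. The cleanest route is Remark~\ref{incVRem}: since $F$ is non-decreasing, it suffices that the second iterate $D_zD_xF(\xi)$ be non-negative for all $(z,x,\xi)$, and this is immediate from the formula $F(x,\xi+\delta_x)=\sum_{\y\in\xi^{k-1}_{\neq}}f(x,\y)$ in \eqref{e:genug}, because enlarging $\xi$ only enlarges the index set over which the non-negative numbers $f(x,\y)$ are summed; the degenerate configurations, and the case $\xi\in B^c$ where both differences vanish, are handled trivially. Remark~\ref{incVRem} then gives $DV^-\geq 0$. (Alternatively, one reads off the monotonicity of $V^-$ directly from the explicit expression $V^-=k^2\int_\X F(x,\eta+\delta_x)^2\,d\mu(x)$, which is visibly non-decreasing in $\eta$.)

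With $D_xF\geq 0$, $D_xV^-\geq 0$ and $\E V^-=k^2V<\infty$ all in place, Theorem~\ref{LowerDevIneq} applies and delivers the integrability of $F$ together with the bound $\P(F\leq\E F-r)\leq\exp(-r^2/(2\E V^-))=\exp(-r^2/(2k^2V))$ for every $r\geq 0$, which is the assertion. I expect the only genuinely fussy point to be the bookkeeping around the chosen representative and the exceptional set $B^c$: the hypotheses of Theorem~\ref{LowerDevIneq} are imposed pointwise in $(x,\xi)$ rather than almost surely, so one must keep track of which configurations lie in $B$ and which in $B^c$ when verifying $D_xF\geq 0$ and $D_xV^-\geq 0$ — but this is exactly what the canonical choice of Section~\ref{s:representative} and the statement of Corollary~\ref{DCor} are designed to accommodate.
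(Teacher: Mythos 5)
Your proposal is correct and follows essentially the same route as the paper's proof: both reduce the statement to Theorem \ref{LowerDevIneq} via Remark \ref{incVRem}, using Corollary \ref{DCor} to get $D_xF\geq 0$, the identity $\E V^- = k^2 V<\infty$, and the non-negativity of the second difference $D_zD_xF$ (which the paper justifies by quoting the explicit formula $D_zD_xF(\xi)=k(k-1)\sum_{\y\in\xi^{k-2}_{\neq}}f(z,x,\y)$ from \cite{RS_2013}, while you argue it directly from the monotonicity of the local version in \eqref{e:genug} — the same fact in substance). Your parenthetical care about the diagonal/degenerate configurations and the set $B^c$ is at the same level of rigour as the paper's own treatment, so no further comment is needed.
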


\begin{proof}
We have $\E V^-(F) = k^2 \E V < \infty$ and since $F$ is well-behaved, it follows from {Corollary \ref{DCor} (ii)} that $D_xF(\xi)\geq 0$ for any $(x,\xi)\in\X\times\OX$. So the result follows from Theorem \ref{LowerDevIneq} together with Remark $\ref{incVRem}$ once we proved that $DD F \geq 0$. According to \cite{RS_2013}, and since $F$ is well-behaved, for any $(z,x,\xi)\in\X\times\X\times\OX$, the second iteration of the difference operator either satisfies $D_zD_xF(\xi) = 0$ or it can be written as
\begin{align*}
D_zD_x F(\xi) = k(k-1) \sum_{\y\in\xi^{k-2}_{\neq}} f(z,x,\y).
\end{align*}
The right-hand side of the above display is non-negative since $f\geq 0$.
\end{proof}

}

\subsection{Computing $V$ in formula \eqref{e:2}}\label{ss:zut}

We will now provide a direct proof that condition \eqref{e:2} is equivalent to the fact that $F$ is a square-integrable U-statistic and that one can obtain a rather explicit expression for $V$ in terms of some set of auxiliary kernels built from $f$.

\begin{defi}\label{d:fi} Let $f$ be a symmetric element of $L^1(\mu^k)$, for some $k\geq 1$. For $i=1,...,k$, we define the kernels $f_i$ as follows: 
\begin{equation}\label{e:kw}
f_i(y_1,...,y_i) := \binom{k}{i} \int_{\X^{k-i}} f(y_1,...,y_i, z_1,...,z_{k-i}) d\mu^{k-i}(z_1,...,z_{k-i}),
\end{equation}
if the integral on the right-hand side is well defined, and $f_i(y_1,...,y_i)= 0$ otherwise. Observe that, since $f$ is in $L^1(\mu^k)$, then the class of those $(y_1,...,y_i)$ such that the integral on the right-hand side of \eqref{e:kw} is not defined has measure $\mu^i$ equal to zero, for every $i=1,...,k$. Plainly, each $f_i$ is a symmetric mapping from $\X^i$ into $\R$ and $f_i \in L^1(\mu^i)$, for every $i=1,...,k$,  and $f_k = f$ by definition. \end{defi}

The upcoming result provides new necessary and sufficient conditions for the square-integrability of U-statistics. {Although the investigations in the present paper (and hence also in the result below) are restricted to U-statistics with non-negative kernels, we stress that this assumption is not needed in the forthcoming proof, and thus, after appropriately adapting the notion of a well-behaved U-statistic for kernels with arbitrary sign, the presented characterization for square-integrable U-statistics also applies when the kernels are not necessarily assumed to be non-negative.}

\begin{prop}[(Characterization of square-integrable $U$-statistics)]\label{p:p} Consider a {well-behaved} U-statistic $F$ of order $k\geq 1$, with {non-negative} kernel $f\in L^1(\mu^k)$. Then, the following assertions are equivalent:
\begin{enumerate}

\item $F$ is square-integrable;

\item for every $i=1,...,k$, $f_i\in L^2(\mu^i)\cap L^1(\mu^i) $, where the kernels $f_i$ have been introduced in Definition \ref{d:fi};

\item $V<\infty$, where $V$ is defined in \eqref{e:2}.

\end{enumerate}
If either one of conditions {\rm (i)}, {\rm (ii)} or {\rm (iii)} is verified, then 
\begin{equation}\label{e:vv}
k^2\,  V = \sum_{i=1}^k ii! \| f_i\|^2_{L^2(\mu^i)} \quad\mbox{and}\quad \V F =  \sum_{i=1}^k i! \| f_i\|^2_{L^2(\mu^i)},
\end{equation}
so that, in particular, $V\leq k^{-1} \times \V F$.
\end{prop}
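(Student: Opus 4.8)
The plan is to establish the chain of implications $(i)\Leftrightarrow(ii)$, $(ii)\Leftrightarrow(iii)$, together with the explicit formulas \eqref{e:vv}, by a direct computation based on the Slivnyak--Mecke formula \eqref{e:smecke}. The central object is the \emph{local version} $F(x,\eta+\delta_x)=\sum_{\y\in\eta^{k-1}_{\neq}}f(x,\y)$, which is itself a Poisson U-statistic of order $k-1$ on $\X$ with kernel $\y\mapsto f(x,\y)$ (for fixed $x$). I would first rewrite the quantity $V$ appearing in \eqref{e:2} by expanding the square: for fixed $x$,
\begin{align*}
F(x,\eta+\delta_x)^2 = \sum_{\y\in\eta^{k-1}_{\neq}}\sum_{\zb\in\eta^{k-1}_{\neq}} f(x,\y)f(x,\zb),
\end{align*}
and then group the pairs $(\y,\zb)$ according to the size $j$ of the overlap $\{y_1,\dots,y_{k-1}\}\cap\{z_1,\dots,z_{k-1}\}$, which ranges over $j=0,\dots,k-1$. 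For each fixed overlap pattern the inner double sum becomes a single sum over $\eta^{2(k-1)-j}_{\neq}$ of a product kernel; applying \eqref{e:smecke} to this $(2(k-1)-j)$-fold sum and then integrating in $x$ against $\mu$ turns $V$ into a finite sum of integrals of the form $\int f\otimes f$ contracted along $j+1$ common variables. Collecting terms and relabelling, each such contraction integral should be recognizable, up to combinatorial constants, as $\|f_i\|^2_{L^2(\mu^i)}$ for the appropriate $i$ (with $i=j+1$ after accounting for the extra variable $x$), yielding the first identity in \eqref{e:vv}. The variance formula $\V F=\sum_{i=1}^k i!\,\|f_i\|^2_{L^2(\mu^i)}$ is the classical Hoeffding-type decomposition for Poisson U-statistics and can either be quoted from \cite{RS_2013} or re-derived by the same overlap-counting argument applied to $\E[S_f(\eta)^2]-(\E S_f(\eta))^2$; comparing the two displays gives $k^2 V\le \sum_i i\cdot i!\,\|f_i\|^2 \le k\sum_i i!\,\|f_i\|^2 = k\,\V F$, i.e. $V\le k^{-1}\V F$.

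For the equivalences I would argue as follows. The implication $(ii)\Rightarrow(iii)$ is immediate once the formula $k^2 V=\sum_{i=1}^k i\,i!\,\|f_i\|^2_{L^2(\mu^i)}$ is in hand, since all terms on the right are then finite. Conversely $(iii)\Rightarrow(ii)$ holds because all summands are non-negative ($f\ge 0$ forces each $f_i\ge 0$, so there is no cancellation) and hence $V<\infty$ forces each $\|f_i\|_{L^2(\mu^i)}$ to be finite; combined with the standing assumption $f\in L^1(\mu^k)$ (which by Definition \ref{d:fi} gives $f_i\in L^1(\mu^i)$ for all $i$), this is exactly (ii). The equivalence $(i)\Leftrightarrow(ii)$ likewise follows from comparing the non-negative series $\V F=\sum_{i=1}^k i!\,\|f_i\|^2_{L^2(\mu^i)}$: finiteness of the variance is equivalent to finiteness of each term, which together with the $L^1$ membership is (ii). One subtlety to address is that a priori $F$ need not even be integrable; but $f\in L^1(\mu^k)$ and $f\ge 0$ give $\E F=\int_{\X^k}f\,d\mu^k<\infty$ via \eqref{e:smecke}, so $\E F$ is finite and the variance is well defined (possibly $+\infty$), and all the manipulations above are justified by Tonelli's theorem since every integrand is non-negative.

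The main obstacle is the bookkeeping in the overlap expansion: one must carefully count, for each $j\in\{0,\dots,k-1\}$, the number of ways two ordered $(k-1)$-tuples of distinct points can share exactly $j$ common entries, track the resulting multinomial factors, and verify that after integrating out the $k-i$ ``free'' variables in each group (using the definition \eqref{e:kw} of $f_i$, which already carries the binomial weight $\binom{k}{i}$) the constants collapse exactly to $i\,i!$. I would organize this by, for each fixed $i\in\{1,\dots,k\}$, isolating the contribution of pairs whose union has cardinality $2k-1-i$ and whose overlap (including $x$) has cardinality $i$; the symmetry of $f$ lets one reduce to a canonical labelling, and the count of labellings gives the factor. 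A secondary technical point is justifying the interchange of $\E$, the sum over $\eta^{k-1}_{\neq}$, and the $\mu$-integral in $x$: since everything is non-negative this is Tonelli, but it is worth stating explicitly because it is what makes the equality $V=\text{(finite series)}$ valid even when both sides are infinite. Finally, I would note that, as remarked in the statement, non-negativity of $f$ is used only to avoid cancellation (ensuring ``some term infinite'' $\Leftrightarrow$ ``sum infinite''); with absolute convergence assumed from the outset the same computation goes through for signed kernels, which is why the proof can be phrased without ever invoking $f\ge 0$ beyond this point.
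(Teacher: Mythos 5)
Your argument is correct, but it takes a genuinely different route from the paper. The paper proves the proposition through the Wiener--It\^o machinery: Step 1 quotes \cite[Theorem 3.6]{RS_2013} to get the chaotic representation $F=\E F+\sum_i I_i(f_i)$ and \cite[Theorem 3.3]{LP_2011} to get $D_xF=\sum_i i\,I_{i-1}(f_i(x,\cdot))$, so both formulas in \eqref{e:vv} follow from the isometry of multiple integrals plus Fubini; Step 2 proves (ii)$\Rightarrow$(i) via the explicit formula for $I_i(f_i)$ from \cite[Theorem 4.1]{surg} and a binomial-inversion identity showing $\sum_i I_i(f_i)=F-\E F$; Step 3 proves (iii)$\Rightarrow$(ii) by applying the \cite{RS_2013} results to the order-$(k-1)$ U-statistic $D_xF$ for $\mu$-a.e.\ $x$. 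You instead compute both second moments directly by the overlap expansion and the multivariate Mecke formula \eqref{e:smecke}, with Tonelli justifying everything since the integrands are non-negative; this yields the two identities in \eqref{e:vv} as equalities in $[0,\infty]$, from which all three equivalences drop out simultaneously (using that $f_i\in L^1(\mu^i)$ is automatic from $f\in L^1(\mu^k)$, $f\geq 0$). The bookkeeping you flag as the main obstacle does close: applying the order-$(k-1)$ second-moment formula to the kernel $f(x,\cdot)$ gives
\begin{equation*}
V=\sum_{i=1}^{k}(i-1)!\binom{k-1}{i-1}^{2}\binom{k}{i}^{-2}\|f_i\|^{2}_{L^2(\mu^i)},
\end{equation*}
and since $\binom{k-1}{i-1}/\binom{k}{i}=i/k$ the coefficient is exactly $i\,i!/k^2$, as you predicted. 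Two small remarks: the first inequality in your chain $k^2V\le\sum_i i\,i!\|f_i\|^2$ should be an equality; and if you quote the variance formula from \cite{RS_2013} rather than re-derive it, note that there it is stated for square-integrable U-statistics, so for the equivalence (i)$\Leftrightarrow$(ii) you genuinely need your Tonelli version valid in $[0,\infty]$ --- you acknowledge this, and it is the right fix. What each approach buys: the paper's proof situates the result inside the chaos/Malliavin framework and delegates the combinatorics to cited results, while yours is more elementary and self-contained, handles the possibly infinite quantities transparently, and makes the stated extension to signed kernels (under absolute integrability) immediate --- at the cost of carrying out the overlap-counting explicitly.
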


\begin{proof}

{}[Step 1: (i) $\to$ (ii), (iii) ] According to \cite[Theorem 3.6]{RS_2013}, if $F$ is a U-statistic as in the statement and $F$ is square-integrable, then necessarily $f_i\in  L^1(\mu^{i})\cap L^2(\mu^{i})$ for every $i=1,...,k$, and moreover $F$ admits the following representation:
$$
F=\E F + \sum_{i=1}^k I_i(f_i),
$$
where $I_i$ denotes a multiple Wiener-It\^o integral of order $i$, with respect to the compensated Poisson measure $\hat{\eta} = \eta-\mu$ (see e.g. \cite[Chapter 5]{PT_2010} for definitions). Note that, exploiting the standard orthonormality properties of multiple integrals, one has also that
$$
\V F =  \sum_{i=1}^k i!  \| f_i\| ^2_{L^2(\mu^i)},
$$
which corresponds to the first relation in \eqref{e:vv}. Combining \cite[Theorem 3.3]{LP_2011} with the previous discussion, one also infers that, if $F$ is square-integrable, then a version of the add-one cost operator $DF$ is given by
\begin{equation}\label{e:gum}
D_xF = \sum_{i=1}^k i I_{i-1} (f_i(x, \cdot) ),
\end{equation}
where $I_{i-1}(f_i(x, \cdot))$ indicates a multiple Wiener-It\^o integral of order $i-1$, with respect to $\hat\eta = \eta-\mu$, of the kernel $f_i(x,\cdot) : \X^{i-1} \to \R$, obtained from $f_i$ (see Definition \ref{d:fi}) by setting one of the variables in its argument equal to $x$; observe that, as usual, the right-hand side of \eqref{e:gum} is implicitly set equal to zero on the exceptional set of those $x\in \X$ such that $f_i(x, \cdot)\notin L^2(\mu^{i-1})$ for at least one $i\in \{1,...,k\}$.  Exploiting the standard orthonormality properties of multiple integrals, one has therefore that
\begin{equation}\label{e:zo}
\E[(D_xF)^2] = \sum_{i=1}^k i^2 (i-1)! \| f_i(x,\cdot)\|^2_{L^2(\mu^{i-1})}, 
\end{equation}
so that the conclusion (as well as the explicit expression of $k^2V =\E \int_\X (D_xF)^2 d\mu(x)$ appearing in \eqref{e:vv}) follows from an application of the Fubini Theorem.
\smallskip

[Step 2: (ii) $\to$ (i)] Assume that, for every $i=1,...,k$, $f_i\in L^2(\mu^i)\cap L^1(\mu^i)$. Then, according to \cite[Theorem 4.1]{surg} the multiple integral $I_i(f_i)$ is a well-defined square-integrable random variable, and moreover 
$$
I_i(f_i) = \sum_{j=0}^i(-1)^{i-j} \binom{i}{j} \sum_{(x_1,...,x_j)\in \eta^j_{\neq}} f_i^{(j)}(x_1,...,x_j),
$$ 
where $f_i^{(j)} :=\binom{k}{i} \times \binom{k}{j} ^{-1}\times  f_j$, and each (possibly infinite) sum in the previous expression converges in $L^1(\P)$. Now write 
$$
u(j) := \sum_{(x_1,...,x_j)\in \eta^j_{\neq}} f_{j}(x_1,...,x_j), \quad j=0,...,k.
$$
The previous discussion yields that
\begin{eqnarray*}
 \sum_{i=0}^k I_i(f_i) &=&  \sum_{i=0}^k \binom{k}{i} \sum_{j=0}^i(-1)^{i-j} \binom{i}{j} u(j)\\
& =& \sum_{j=0}^k u(j)\left\{ \sum_{i=j}^k \binom{k}{i}\binom{i}{j} (-1)^{i-j}\right\} = u(k)=F,
\end{eqnarray*}
where we have used the fact that the sum $\sum_{i=j}^k \binom{k}{i}\binom{i}{j} (-1)^{i-j}$ equals one if $j=k$, and vanishes otherwise. It follows that $F$ is square-integrable, since it is equal to a finite sum of square-integrable random variables. 

\smallskip

[Step 3: (iii) $\to$ (ii)] If $V<\infty$, then there exists a measurable set $B\subset \X$ such that $\mu(B^c)=0$, and $\E(D_xF)^2<\infty$, for every $x\in B$. {Using \cite[Lemma 3.5, Theorem 3.6]{RS_2013} together with the {fact that, since $F$ is well-behaved,} $D_xF$ is the {(well-behaved)} U-statistic of order $k-1$ defined in Proposition \ref{DUstatProp}}, we immediately deduce that, for $x\in B$, one has that (adopting the same notation as in Step 1) $f_i(x, \cdot) \in L^2(\mu^{i-1})$, and also
$$
D_xF = \sum_{i=1}^k i I_{i-1} (f_i(x, \cdot) ).
$$
The conclusion follows by using once again \eqref{e:zo} and the Fubini Theorem.
\end{proof}

\begin{rem} According e.g. to \cite[Lemma 3.1]{PT_alea}, the condition $\E \int_\X (D_xF)^2 d\mu(x)<\infty$ is equivalent to the fact that $F$ belongs to the domain of the {\it Malliavin derivative} associated with $\eta$. This fact is consistent with the fact that {square-integrable} $U$-statistics have a finite Wiener-It\^o chaotic expansion, and therefore belong automatically to the domain of the Malliavin derivative.
\end{rem}

An application of Proposition \ref{p:p} to the estimation of lower tails for edge-counting in random geometric graphs (involving in particular U-statistics of order $k=2$) is presented in Section \ref{ss:eclower}.


\section{Applications to edge counting}\label{s:edge}

In this section, we let $\eta$ denote a Poisson point process on $(\R^d, \mathcal{B}(\R^d))$, with intensity given by a Borel measure $\mu$ (in particular, $\mu(K)<\infty$ for every compact set $K$). {We also assume again} that $\mu$ has no atoms, that is, $\mu(\{x\}) = 0$ {for every $x\in \R^d$}. For a fixed $\rho>0$, we shall consider the graph $\mathfrak{G}$ (often called the {\it Gilbert graph}, or the {\it disk Graph} with radius $\rho$ associated with $\eta$) obtained as follows: the vertex set of $\mathfrak{G}$ is given by the points in the support of $\eta$, and two vertices $x, y$ are linked by an edge {(in symbols, $x\leftrightarrow y$)} whenever $0< \lVert x-y \rVert \leq \rho$ (in particular, $\mathfrak{G}$ has no loops). For technical reasons clarified below, we will assume for the rest of the section that the following condition on $\mu$ is verified: denoting $B(x,\rho)$ the {closed} ball of radius $\rho$ centered in $x$,
\begin{equation}\label{e:r}
\int_{\R^d} \, \mu(B(x,\rho)) \, d\mu(x)<\infty.
\end{equation}
Relation \eqref{e:r} is verified whenever $\mu(\R^d)<\infty$, but such a finiteness condition is not necessary for \eqref{e:r} to hold \footnote{ Consider for instance the measure $\mu$ on $\R^2$ having density $p(x)=(\lVert x\rVert + 1)^{-2}$ together with an arbitrary radius $\rho>0$}. Note that, if $\mu$ is Borel and \eqref{e:r} is in order, then the mapping $x\mapsto \mu(B(x,\rho))$ is necessarily bounded. To see this, choose $\gamma>0$ such that the ball $B(0,\rho)$ can be written as a union of $\lfloor 1/\gamma\rfloor$ many sets with diameter less than $\rho$. Then the pigeonhole principle yields that for any $y\in\R^d$ we can choose a set $C_y\subset B(y,\rho)$ satisfying: (i) $C_y\subseteq B(x,\rho)$ for all $x\in C_y$, and (ii) $\mu(C_y) \geq \gamma \mu(B(y,\rho))$. Now,
\begin{align*}
\int_{\R^d} \mu(B(x,\rho)) d\mu(x) &\geq \sup_{y\in\R^d} \int_{C_y} \mu(B(x,\rho))d\mu(x) \geq \gamma \sup_{y\in\R^d} \int_{C_y} \mu(B(y,\rho)) d\mu(x)\\
& = \gamma \sup_{y\in\R^d} \mu(C_y) \mu(B(y,\rho)) \geq \gamma^2 \sup_{y\in\R^d} \mu(B(y,\rho))^2.
\end{align*}

\medskip

Originally introduced in 1959 by Gilbert in the seminal work \cite{G_1959}, the disk graph $\mathfrak{G}$ is the archetypical example of a {\it random geometric graph}. Since then, the study of such an object has been at the center of a formidable collective effort, both at a theoretical and applied level. We refer the reader to the fundamental monograph \cite{Penrose_book} for a detailed overview of the literature on Gilbert graphs up to the year 2003. Recent developments that are relevant for our work are discussed e.g. in \cite{BP_2012, DFRV, LRP1, LRP2, RS_2013, RST_2013, ST_spa2012}.

\medskip

In this section, we will provide new concentration estimates for the random variable $$N =N(\eta):= \# \big\{\{x,y\}\subseteq \eta : x\leftrightarrow y\big\}, $$ corresponding to the number of edges of $\mathfrak{G}$. It is immediately seen that $N$ is a Poisson U-statistic of order $2$ with positive kernel $f(x,y) = \tfrac{1}{2} \ind\{\lVert x-y \rVert \leq \rho\}$. In particular, the Slivniak-Mecke formula \eqref{e:smecke} together with a standard use of the Fubini Theorem yields that the assumption \eqref{e:r} is actually equivalent to integrability of $N$ and that
$$
 \E N =\frac12 \int_{\R^d} \mu(B(x,\rho)) \, d\mu(x).
$$
We also see that assumption \eqref{e:r} implies that $N<\infty$ almost surely, yielding in turn that $N$ is well-behaved.

\subsection{Preparation: optimal rates} \label{optimalExp}

Let the above notation and assumptions prevail. In the forthcoming Section \ref{ss:rggut}, we will provide estimates for the upper tail of $N$ having the form
\begin{equation}\label{e:ir}
\P(N\geq \E N + r)\leq \exp(-I(r)), \quad r>0,
\end{equation}
where $r\mapsto I(r)$ is a positive mapping verifying 
\begin{equation}\label{e:rc}
\lim_{r\to\infty}I(r) = \infty.
\end{equation}
The next statement contains a universal necessary condition on the asymptotic behaviour of $I(r)$.

\begin{prop}\label{p:ny}
Let $I(r)$ verify \eqref{e:ir} and \eqref{e:rc}. Then,
$$
\limsup_{r\to \infty} \frac{I(r)}{r^{1/2} \log r}\leq \frac{1}{\sqrt{2}}.
$$
\end{prop}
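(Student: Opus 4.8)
The idea is to lower bound $\P(N \geq \E N + r)$ by the probability of an explicit event on which $N$ is forced to be very large, namely the event that $\eta$ places many points inside a single small ball. First I would fix a point $x_0 \in \R^d$ and a radius $\rho/2$, so that any two points of $\eta$ lying in $B(x_0,\rho/2)$ are automatically joined by an edge of $\mathfrak{G}$. Let $p := \mu(B(x_0,\rho/2))$; since $\mu$ is a Borel measure it is positive for a suitable choice of $x_0$ (recalling $\mu(\X)>0$), and we may as well assume $p>0$ is fixed. Writing $A := B(x_0,\rho/2)$, the restriction $\eta|_A$ is Poisson with parameter $p$, so $\P(\eta(A) = m) = e^{-p} p^m/m!$ for every integer $m \geq 0$.

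\textbf{Key steps.} On the event $\{\eta(A) = m\}$, the number of edges among the points of $\eta$ inside $A$ is exactly $\binom{m}{2}$, so $N \geq \binom{m}{2}$ on this event. Hence for any integer $m$,
\begin{equation}\label{e:lb}
\P\!\left(N \geq \binom{m}{2}\right) \geq \P(\eta(A) = m) = e^{-p}\,\frac{p^m}{m!}.
\end{equation}
Now suppose $I$ satisfies \eqref{e:ir} and \eqref{e:rc}. Given a large integer $m$, set $r = r_m := \binom{m}{2} - \E N$, which is positive and tends to infinity with $m$; then \eqref{e:ir} together with \eqref{e:lb} gives
\begin{equation}\label{e:cc}
\exp(-I(r_m)) \geq \P\!\left(N \geq \E N + r_m\right) \geq e^{-p}\,\frac{p^m}{m!},
\end{equation}
so that $I(r_m) \leq -\log\bigl(e^{-p} p^m/m!\bigr) = p - m\log p + \log m!$. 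Applying Stirling's formula $\log m! = m\log m - m + O(\log m)$, the right-hand side is $m\log m - m(1+\log p) + p + O(\log m) = m\log m\,(1 + o(1))$ as $m \to \infty$. On the other hand $r_m = \binom{m}{2} - \E N = \tfrac{m^2}{2}(1+o(1))$, so $r_m^{1/2} = \tfrac{m}{\sqrt 2}(1+o(1))$ and $\log r_m = 2\log m\,(1+o(1)) = \log m\,(2+o(1))$, whence $r_m^{1/2}\log r_m = \sqrt{2}\, m\log m\,(1+o(1))$. Dividing,
\begin{equation}\label{e:ratio}
\frac{I(r_m)}{r_m^{1/2}\log r_m} \leq \frac{m\log m\,(1+o(1))}{\sqrt 2\, m\log m\,(1+o(1))} = \frac{1}{\sqrt 2}\,(1+o(1)).
\end{equation}

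\textbf{From the subsequence to the full limsup.} Inequality \eqref{e:ratio} only controls $I$ along the subsequence $r_m$. To conclude I would interpolate: for arbitrary large $r$, choose $m = m(r)$ so that $r_{m} \leq r < r_{m+1}$. Since $N \geq \E N + r$ implies $N \geq \E N + r_m$, monotonicity of $r \mapsto \P(N \geq \E N + r)$ and \eqref{e:ir} give a bound on $I(r)$ in terms of $I$ evaluated effectively at the grid points; more directly, one runs the same argument with $m = m(r)$, noting $\binom{m}{2} \leq r + \E N$ already suffices in \eqref{e:lb} to get $\P(N \geq \E N + r) \geq \P(N \geq \binom{m}{2}) \geq e^{-p}p^m/m!$, hence $I(r) \leq p - m\log p + \log m!$. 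Because $r_{m+1}/r_m \to 1$, we have $r = r_m(1+o(1))$ and $m(r) \sim \sqrt{2r}$, and the same asymptotic computation as above yields $I(r)/(r^{1/2}\log r) \leq \tfrac{1}{\sqrt 2}(1+o(1))$ for all large $r$, giving the claimed $\limsup$.

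\textbf{Main obstacle.} The only genuine subtlety is ensuring the passage from the discrete subsequence $\{r_m\}$ to all real $r$ is airtight — i.e.\ that the ratio $r^{1/2}\log r$ varies slowly enough between consecutive grid points $r_m$ and $r_{m+1}$ that no loss occurs in the $\limsup$. This is where one must be slightly careful, but it is harmless since $r_{m+1} - r_m \sim m$ is of lower order than $r_m \sim m^2/2$, so the grid is asymptotically dense on the relevant logarithmic scale. Everything else — the choice of $A$, the edge count $\binom{m}{2}$, and the Stirling estimate — is routine.
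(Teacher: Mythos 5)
Your proposal follows essentially the same route as the paper: both lower bound the upper tail of $N$ by packing points into a ball $B(x_0,\rho/2)$ of positive $\mu$-measure, use that the $\hat N=\eta(B(x_0,\rho/2))$ points there form a complete graph so $N\geq \binom{\hat N}{2}$, and then exploit the Poisson tail. The only real difference is in implementation: the paper applies the known asymptotic $\P(X\geq r)\sim\exp(-r\log(r/\E X)-\E X)$ for a Poisson variable at the threshold $\gamma(r)=\sqrt{2\E N+\tfrac14+2r}+\tfrac12$, which treats every real $r$ at once, whereas you use the single point mass $e^{-p}p^m/m!$ plus Stirling, which is more self-contained (no citation needed) but forces the passage from the grid $r_m=\binom m2-\E N$ to all $r$. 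That interpolation step contains the one genuine slip: you write that $\binom m2\leq r+\E N$ suffices to get $\P(N\geq\E N+r)\geq\P(N\geq\binom m2)$, but with that inequality the event inclusion goes the wrong way ($\{N\geq\E N+r\}\subseteq\{N\geq\binom m2\}$), so the bound fails as stated. You need the opposite condition $\binom m2\geq r+\E N$, i.e.\ take the \emph{smallest} integer $m$ with $\binom m2\geq r+\E N$ (equivalently $m(r)+1$ in your indexing); then $\{\eta(A)=m\}\subseteq\{N\geq\E N+r\}$ gives $I(r)\leq p-m\log p+\log m!$, and since this $m$ still satisfies $m\sim\sqrt{2r}$ the Stirling computation is unchanged and yields $I(r)\leq(1+o(1))\,r^{1/2}\log r/\sqrt2$. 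With that one-line correction the argument is complete and the conclusion matches the paper's.
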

\begin{proof} Let $x\in\R^d$ such that $m = \mu(B(x,\rho/2))>0$. Then $\hat{N} = \eta(B(x,\rho/2))$ is Poisson distributed with expectation $m$. Moreover, the distance between any $y,z\in B(x,\rho / 2)$ is at most $\rho$, thus any two vertices in $B(x,\rho / 2)$ are connected by an edge. This implies that almost surely $\hat{N}(\hat{N}-1)/2 \leq N$. Hence, for any $r\geq 0$ we have
\begin{align*}
\P(N\geq \E N + r) \geq \P(\hat{N}^2 - \hat{N} \geq 2\E N + 2r) \geq \P(\hat{N}\geq \gamma(r)),
\end{align*}
where $\gamma(r) = \sqrt{2\E N + \tfrac{1}{4} + 2 r} + \tfrac{1}{2}$. It is well known that for a Poisson random variable $X$ the upper tail satisfies $\P(X\geq r) \sim \exp(-r\log(r/\E X) - \E X)$ as $r\to \infty$, see for example \cite{G_1987}. Hence,
\begin{align*}
\liminf_{r\to\infty} \frac{\P(N\geq \E N + r)}{\exp(-\gamma(r)\log(\gamma(r)/m) - m)} \geq 1.
\end{align*}
The above considerations yield that {there exists a constant $C\geq 0$ such that}, for $r$ large enough along any subsequence diverging to infinity,
\begin{align*}
\gamma(r)\log(\gamma(r)/m)+m \geq I(r) - C.
\end{align*}
Dividing this inequality by $I(r)$ and letting $r$ diverge to infinity gives
\begin{align}\label{e:g}
\liminf_{r\to\infty} \frac{\gamma(r)\log(\gamma(r)/m)}{I(r)} \geq 1.
\end{align}
The conclusion is obtained by observing that, as $r\to\infty$,
\begin{align*}
\gamma(r)\log(\gamma(r)/m) \sim (r/2 )^{1/2}  \log r.
\end{align*}
\end{proof}

The following statement is an elementary consequence of Proposition \ref{p:ny}.

\begin{cor}\label{c:o} Let $r\mapsto I(r)$ be a positive mapping verifying \eqref{e:ir}, and assume that there exist constants $a, b>0$ such that, as $r\to\infty$, $I(r)\sim b\, r^a $. Then, necessarily, $a \leq \frac12$.

\end{cor}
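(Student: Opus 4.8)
The plan is to derive Corollary~\ref{c:o} directly from Proposition~\ref{p:ny} by contradiction. Suppose that $I(r)\sim b\, r^a$ as $r\to\infty$ for some constants $a,b>0$, and suppose moreover that $a>\tfrac12$. I would then evaluate the limit appearing in Proposition~\ref{p:ny}, namely
\[
\limsup_{r\to\infty} \frac{I(r)}{r^{1/2}\log r}.
\]
Since $I(r)\sim b\, r^a$, we have $I(r)/(r^{1/2}\log r) \sim b\, r^{a-1/2}/\log r$, and because $a-\tfrac12>0$, the quantity $r^{a-1/2}/\log r$ tends to $+\infty$ as $r\to\infty$. Hence the $\limsup$ above equals $+\infty$, which contradicts the bound $\limsup \leq 1/\sqrt{2}$ provided by Proposition~\ref{p:ny}. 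Therefore we must have $a\leq\tfrac12$.

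The only mild subtlety is checking the hypotheses of Proposition~\ref{p:ny} are in force: one needs $I$ to satisfy \eqref{e:ir} (which is assumed in the statement of the corollary) and \eqref{e:rc}, i.e. $\lim_{r\to\infty}I(r)=\infty$. But the latter is immediate from $I(r)\sim b\, r^a$ with $a,b>0$, since then $I(r)\to\infty$. So Proposition~\ref{p:ny} applies and the argument goes through.

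There is essentially no obstacle here; the corollary is a one-line asymptotic comparison once Proposition~\ref{p:ny} is available. The only thing worth being careful about is the elementary fact that $r^{\epsilon}/\log r\to\infty$ for any $\epsilon>0$, which justifies why a polynomial rate with exponent exceeding $\tfrac12$ would violate the proposition; this can be invoked without proof as a standard limit.
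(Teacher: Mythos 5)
Your argument is correct and is exactly the elementary deduction from Proposition \ref{p:ny} that the paper intends (the paper states the corollary without proof as an ``elementary consequence''): if $a>\tfrac12$ then $I(r)/(r^{1/2}\log r)\sim b\,r^{a-1/2}/\log r\to\infty$, contradicting the bound $\limsup\leq 1/\sqrt2$, and your check that \eqref{e:rc} follows from $I(r)\sim b\,r^a$ is the right point to verify. Nothing further is needed.
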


\subsection{Deviation inequalities for the upper tail}\label{ss:rggut}
We will now deal with bounds on the upper tail of $N$. We start by observing that, for every $x\in \eta$, the local version $N(x, \eta)$, as defined in \eqref{e:genug}, is exactly given by {the quantity $\deg(x)/2$, where $\deg(x)=\#\{y\in\eta:x\leftrightarrow y\}$ is the \emph{degree} of the vertex $x$}. Our aim in what follows is to show that, for some constant $c>0$, one has that almost surely
\begin{align}\label{edgeIneq}
\sum_{x\in\eta} \deg(x)^ 2 \leq c N^ {3/2}.
\end{align}
Hence, Theorem \ref{DevIneq2} yields the following deviation inequality for the upper tail:
\begin{equation}\label{e:copa}
\P(N\geq\E N + r) \leq \exp\left(-\frac{((r+\E N)^{1/4} - (\E N)^{1/4})^2}{ 2 c}\right).
\end{equation}

Observe that the right-hand side of \eqref{e:copa} has the form $\exp(-I(r))$, where $I(r)\sim r^{1/2}/2c$, as $r\to\infty$. According to Corollary \ref{c:o}, the power $1/2$ for $r$ is optimal in this situation. We will see in Section \ref{ss:lpf} that, by adopting an alternative approach, the rate of decay of $I(r)$ can indeed be improved by the square root of a logarithmic factor.

Also notice that by virtue of relation \eqref{edgeIneq} together with Theorem \ref{DevIneq2}, almost sure finiteness of $N$ is equivalent to integrability of $N$. Hence, relation \eqref{e:r} actually holds if and only if $N$ is almost surely finite.
\medskip

We start by proving a geometric lemma, focussing on deterministic point configurations. In what follows, we shall write $\mathfrak{p} = \mathfrak{p}(d)$ to indicate the smallest integer $\mathfrak{p}$ such that the half ball $B=\{x\in\R^d: \lVert x\rVert \leq \rho, x_1>0\}$ can be written as a union $B = B_1\cup\ldots \cup B_{\mathfrak{p}}$ of disjoint sets such that $\diam(B_i)\leq \rho$ for all $i=1,...,\mathfrak{p}$. Note that the value of $\mathfrak{p}$ depends on the dimension $d$ of the surrounding Euclidean space. In the plane $\R^2$ one has for example that $\mathfrak{p} = 3$. The picture below illustrates the situation described in the proof of the upcoming lemma.
\medskip

\begin{figure}[ht]
\includegraphics[scale=0.3]{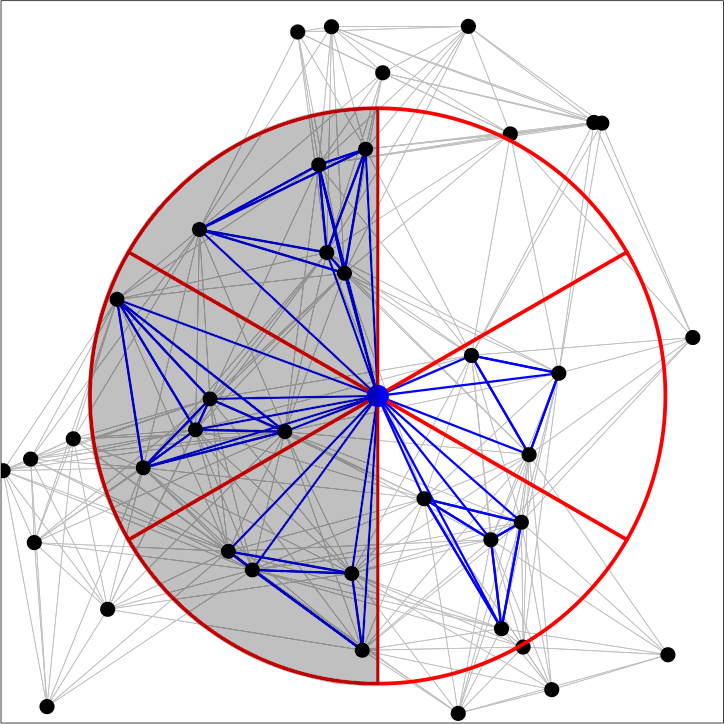}
\caption{Partitioning of the half ball}
\end{figure}

\begin{lem} \label{edgeLem}
Let $\xi\subset\R^d$ be a countable set. Denote the disk graph with radius $\rho>0$ associated with $\xi$ by $\mathfrak{G}_\xi$. For all $x\in\xi$, define the \emph{right-degree} and the \emph{left-degree}  of $x$ as
\[
\deg_r(x) = \#((x+B)\cap \xi) \ \ \text{ and } \ \ \deg_l(x) = \#((x-B)\cap \xi).
\]
Let $T_\xi$ and $N_\xi$ denote the number of triangles and edges in $\mathfrak{G}_\xi$, respectively. Then
\[
2\mathfrak{p} T_\xi + \mathfrak{p} N_\xi \geq \sum_{x\in\xi}  \deg_r(x)^2.
\]
This inequality also holds for the left-degree instead of the right-degree.
\end{lem}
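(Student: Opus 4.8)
The plan is to fix a point $x\in\xi$ and analyze the contributions of neighbors of $x$ on the "right side" to the quantity $\deg_r(x)^2$, then sum over $x$. The key structural fact is the partition $B = B_1\cup\cdots\cup B_{\mathfrak p}$ into pieces of diameter at most $\rho$. For each $i\in\{1,\dots,\mathfrak p\}$, write $S_i(x) := (x+B_i)\cap\xi$, so that the sets $S_1(x),\dots,S_{\mathfrak p}(x)$ partition $(x+B)\cap\xi$ and hence $\deg_r(x) = \sum_{i=1}^{\mathfrak p}\#S_i(x)$. By the Cauchy--Schwarz inequality (or simply the convexity of $t\mapsto t^2$) applied to this sum of $\mathfrak p$ terms, one gets $\deg_r(x)^2 \le \mathfrak p\sum_{i=1}^{\mathfrak p}\#S_i(x)^2$. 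The point of choosing pieces of diameter $\le\rho$ is that any two vertices lying in the same $S_i(x)$ are at distance $\le\rho$, hence joined by an edge in $\mathfrak G_\xi$; moreover each such vertex lies in $x+B\subseteq B(x,\rho)$, so it is joined to $x$ as well.

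Next I would interpret $\#S_i(x)^2 = \#S_i(x) + 2\binom{\#S_i(x)}{2}$, i.e. separate the diagonal from the off-diagonal pairs. The diagonal term contributes $\#S_i(x)$, and summing over $i$ gives $\deg_r(x)$; the off-diagonal term counts, for each unordered pair $\{y,z\}\subseteq S_i(x)$ with $y\ne z$, a contribution of $2$. For such a pair, the triple $\{x,y,z\}$ forms a triangle in $\mathfrak G_\xi$ (all three pairwise distances are $\le\rho$ by the previous paragraph), and $x$ is a vertex of that triangle. Thus $\binom{\#S_i(x)}{2}$ is at most the number of triangles in $\mathfrak G_\xi$ that have $x$ as a vertex and whose other two vertices both lie in $x+B_i$. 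Putting these bounds together yields, for each fixed $x$,
\[
\deg_r(x)^2 \;\le\; \mathfrak p\,\deg_r(x) \;+\; 2\mathfrak p\sum_{i=1}^{\mathfrak p}\binom{\#S_i(x)}{2}.
\]

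Finally I would sum over $x\in\xi$. The term $\mathfrak p\sum_{x}\deg_r(x)$ is bounded by $\mathfrak p\sum_x\deg(x) = 2\mathfrak p N_\xi$, but one must be slightly careful: actually $\sum_x\deg_r(x) = N_\xi$ already, since each edge $\{x,y\}$ with (say) $y\in x+B$ is counted once in $\deg_r(x)$ — in fact summing right-degrees counts each edge exactly once because exactly one of $y-x$, $x-y$ lies in $B$ (recall $B$ is a half-ball, open on the side $x_1>0$, and one should note the measure-zero / generic-position issue that $y-x$ never lies on the hyperplane $x_1=0$; if it could, a harmless tie-breaking convention handles it). So $\mathfrak p\sum_x\deg_r(x)\le\mathfrak p N_\xi$, even better than needed. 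For the triangle term, $\sum_x\sum_i\binom{\#S_i(x)}{2}$ counts pairs (triangle, distinguished vertex $x$ of it, index $i$), and for a fixed triangle there is at most one vertex $x$ and one index $i$ for which the other two vertices both lie in $x+B_i$ — but even the crude bound "at most $3$ choices of distinguished vertex, and for each at most one admissible $i$ since the $B_i$ are disjoint" gives $\sum_x\sum_i\binom{\#S_i(x)}{2}\le 3 T_\xi$; one wants the sharper count giving $\le T_\xi$, which follows because if $\{x,y,z\}$ is a triangle then at most one of the three vertices has the other two both on its "right" inside a single $B_i$ (this uses that the $B_i$ partition the half-ball $B$, not the whole ball). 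Assembling: $\sum_x\deg_r(x)^2 \le \mathfrak p N_\xi + 2\mathfrak p T_\xi$, as claimed. The identical argument with $x-B$ in place of $x+B$ gives the left-degree version.

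I expect the main obstacle to be the bookkeeping in the last paragraph: getting the sharp constant $2\mathfrak p$ (rather than $6\mathfrak p$) in front of $T_\xi$ requires arguing that each triangle is charged at most once across all $(x,i)$ pairs, which is exactly where the fact that the $B_i$ tile a \emph{half}-ball (so that the three "cones" $x+B$, $y+B$, $z+B$ overlap in a controlled way) is used. The Cauchy--Schwarz step and the diagonal/off-diagonal split are routine; the geometric content is entirely in the diameter-$\le\rho$ property of the partition and in the half-ball tie-breaking.
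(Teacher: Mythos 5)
Your proposal is correct and follows essentially the same route as the paper: partition the half-ball into $\mathfrak{p}$ pieces of diameter at most $\rho$, bound $\deg_r(x)^2$ by same-cell pair counts via convexity (your Cauchy--Schwarz plus diagonal/off-diagonal split is algebraically the paper's Jensen step on $T(x)\geq\sum_i\binom{n_i}{2}$), identify each same-cell pair with a triangle whose distinguished vertex has strictly smallest first coordinate, and sum, noting each triangle and each edge is charged at most once. Your remark that $\sum_x\deg_r(x)\leq N_\xi$ (rather than equality, because of possible ties in the first coordinate) is exactly what is needed and matches the paper.
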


\begin{proof}
For the rest of the proof, we write $\mathfrak{G}, \, T$ and $N$, without the subscript $\xi$, to simplify the notation. Without loss of generality, we can assume that $\xi$ only contains a finite number of non-isolated points in the topology of the graph; otherwise $N=\infty$ and the estimate in the statement is trivially satisfied. Let $x\in\xi$ and denote by $T(x)$ the number of those triangles $\{x,y,z\}$ in $\mathfrak{G}$ incident to $x$, and such that $x_1<\min(y_1,z_1)$, where $a_1$ indicates the first coordinate of a given vector $a\in\R^ d$. Moreover, for $i=1,\ldots,\mathfrak{p}$ let $n_i$ denote the number of elements of $\xi$ contained in $B_i+x$. Then, since any two vertices contained in the same $B_i+x$ yield an edge and thus a triangle incident to $x$ in the sense described above, we have
\[
T(x)\geq \sum_{i=1}^{\mathfrak{p}} \binom{n_i}{2}.
\]
In view of the relation $\sum_i n_i = \deg_r(x)$, the right-hand side of the previous expression can be further bounded from below, thus yielding the relation:
\[
T(x) \geq \frac{1}{2}\mathfrak{p} \cdot \frac{\deg_r(x)}{\mathfrak{p}}\left(\frac{\deg_r(x)}{\mathfrak{p}}-1\right).
\]
Hence,
\[
2\mathfrak{p} T(x) \geq \deg_r(x)^2-\mathfrak{p}\deg_r(x).
\]
Summing up over all $x\in\xi$ yields
\[
2\mathfrak{p} \sum_{x\in\xi} T(x) + \mathfrak{p}\sum_{x\in\xi}\deg_r(x) \geq \sum_{x\in\xi}\deg_r(x)^2.
\]
Observe that in the sum $\sum_{x\in\xi} T(x)$ each triangle is counted at most once, thus $\sum_{x\in\xi} T(x)\leq T$. Also, in the sum $\sum_{x\in\xi}\deg_r(x)$ each edge is counted at most once, so this sum is less than $N$.
\end{proof}
We now deduce a bound on $\sum_{x\in\xi}\deg(x)^2$ for any countable point configuration $\xi$.

\begin{cor} \label{edgeCor}
Let $\xi\subset\R^d$ be countable. Let $\mathfrak{G}_\xi$ be the disk graph (with some arbitrary radius $\rho>0$) associated with $\xi$ and denote the number of edges of $\mathfrak{G}_\xi$ by $N_\xi$. Then,
\[
\sum_{x\in\xi}\deg(x)^2 \leq \frac{8 \sqrt{2}}{3} \mathfrak{p} N_\xi^{3/2} +4\mathfrak{p}N_\xi
\leq \left(\frac{8 \sqrt{2}}{3} + 4\right) \mathfrak{p} N_\xi^{3/2}.
\]
\end{cor}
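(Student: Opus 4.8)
The plan is to combine Lemma~\ref{edgeLem} with the classical extremal bound on the number of triangles of a graph in terms of its number of edges. Fix a countable $\xi\subset\R^d$ and abbreviate $\mathfrak G=\mathfrak G_\xi$, $T=T_\xi$, $N=N_\xi$. If $N=\infty$ the asserted inequality is trivial, so assume $N<\infty$; then $\mathfrak G$ has only finitely many non-isolated vertices and all sums below are finite.

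The first step is the observation that, for every $x$, one has $\deg(x)=\deg_r(x)+\deg_l(x)$: indeed $x\notin x\pm B$, and a point $y\neq x$ with $\lVert y-x\rVert\le\rho$ belongs to exactly one of $x+B$, $x-B$ as soon as $(y-x)_1\neq 0$. The only obstruction is the ``equator'' $(y-x)_1=0$, and this is removed by first applying a rotation to $\xi$ so that no two of its (countably many) points share the same first coordinate --- such a rotation exists because the exceptional rotations form a countable union of proper subvarieties of ${\rm SO}(d)$, and a rotation changes neither $\mathfrak p$ (which depends only on $d$) nor the isomorphism type of $\mathfrak G_\xi$, hence leaves $T$, $N$ and $\sum_x\deg(x)^2$ unchanged. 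Granting this, $(a+b)^2\le 2a^2+2b^2$ gives $\deg(x)^2\le 2\deg_r(x)^2+2\deg_l(x)^2$; summing over $x\in\xi$ and applying both the right-degree and the left-degree version of Lemma~\ref{edgeLem} yields
\[
\sum_{x\in\xi}\deg(x)^2\;\le\;2\sum_{x\in\xi}\deg_r(x)^2+2\sum_{x\in\xi}\deg_l(x)^2\;\le\;8\,\mathfrak p\,T+4\,\mathfrak p\,N.
\]

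It remains to bound $T$. Here one uses the fact, valid for an arbitrary simple graph with $N$ edges, that $T\le\frac{\sqrt2}{3}N^{3/2}$: writing $N=\binom{x}{2}$ with a real $x\ge2$, the Kruskal--Katona theorem gives $T\le\binom{x}{3}$, and a direct computation shows $\binom{x}{3}\le\frac{\sqrt2}{3}\binom{x}{2}^{3/2}$ for every $x\ge2$ (equivalently $x-2\le\sqrt{x^2-x}$); alternatively this estimate may simply be quoted from the extremal graph theory literature. Substituting into the previous display produces $\sum_{x\in\xi}\deg(x)^2\le\frac{8\sqrt2}{3}\mathfrak p\,N^{3/2}+4\,\mathfrak p\,N$, which is the first claimed estimate; the second follows because $N\le N^{3/2}$ when $N\ge1$, while both sides vanish when $N=0$. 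The only point beyond routine bookkeeping is the input of the triangle--edge inequality with the sharp constant $\sqrt2/3$; cruder self-contained bounds (such as $3T\le\frac12\sum_x\deg(x)^2$) merely feed back into the left-hand side and are of no use here, so an estimate of Kruskal--Katona type is genuinely needed.
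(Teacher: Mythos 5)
Your proof is correct and follows essentially the same route as the paper: rotate $\xi$ so that first coordinates are distinct, split $\deg(x)=\deg_r(x)+\deg_l(x)$, apply $(a+b)^2\le 2a^2+2b^2$ and Lemma~\ref{edgeLem} for both half-balls to get $8\mathfrak{p}T_\xi+4\mathfrak{p}N_\xi$, then invoke the triangle--edge bound $3T_\xi\le\sqrt{2}\,N_\xi^{3/2}$. The only cosmetic difference is that you sketch this last bound via Kruskal--Katona, whereas the paper simply cites it from \cite{R_2002}; both are fine.
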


\begin{proof}
First we observe that without loss of generality it can be assumed that the first coordinates of all elements in $\xi$ are distinct. Obviously, the combinatorial structure of $\mathfrak{G}_\xi$ is invariant under rotation of the set $\xi$. Also, the assumption in question can be achieved to hold by rotating the set $\xi$ with respect to a direction $a\in\R^d, \lVert a \rVert = 1$ that satisfies $(x-y) / \lVert x-y \rVert \neq \pm a$ for all distinct $x,y\in \xi$. Such a direction exists since the set of directions
\begin{align*}
\{(x-y)/ \lVert x-y \rVert : (x,y)\in\xi_{\neq}^2\}
\end{align*}
is countable and hence a strict subset of all directions $\{a\in\R^d:\lVert a\rVert = 1\}$. So, it can be assumed that the elements of $\xi$ have distinct first coordinates. In particular, for any $x\in\xi$ we have $\deg(x) = \deg_r(x) + \deg_l(x)$. Thus
\[
\sum_{x\in\xi}\deg(x)^2 = \sum_{x\in\xi}(\deg_r(x)+\deg_l(x))^2 \leq
2 \sum_{x\in\xi} (\deg_r(x)^2 + \deg_l(x)^2).
\]
By Lemma \ref{edgeLem}, the latter expression does not exceed
\[
8 \mathfrak{p} T_\xi + 4\mathfrak{p} N_\xi,
\]
where $T_\xi$ stands for the number of triangles in $\mathfrak{G}_\xi$. The result follows by using an estimate taken from \cite{R_2002}, where it was proven that
\[
3T_\xi \leq \sqrt{2}N_\xi^{3/2}.
\]
\end{proof}

The next statement is one of the main achievements in the present section.

\begin{thm}
\begin{enumerate}

\item Let $\xi\subset\R^d$ be countable. Let $\mathfrak{G}_\xi$ be a disk graph with arbitrary radius $\rho$ associated with $\xi$, and denote the number of edges of $\mathfrak{G}_\xi$ by $N_\xi$. Then,
\[
\sum_{x\in\xi}\deg(x)^2 \leq \left(\frac{8 \sqrt{2}}{3} + \frac{4}{D}\right) \mathfrak{p} N_\xi^{3/2},
\]
where
\[
D=\frac{4\sqrt{2}}{3} \mathfrak{p} + \sqrt{\frac{32}{9}\mathfrak{p}^2 + 4\mathfrak{p} - 1}.
\]

\item Now let $\eta$ be the Poisson measure on $\R^d$ with non-atomic intensity $\mu$ considered in this section, and denote by $\mathfrak{G}_\eta = \mathfrak{G}$ the random disk graph (with arbitrary radius $\rho$) associated with $\eta$. Let $N_\eta = N$ be the number of edges of {the random disk graph $\mathfrak{G}$. Then relation \eqref{edgeIneq} holds almost surely, for $c=\left(\frac{8 \sqrt{2}}{3} + \frac{4}{D}\right) \mathfrak{p}(d)$.} In particular, the tail estimate \eqref{e:copa} is verified.

\end{enumerate}
\end{thm}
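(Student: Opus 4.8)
The proof of part (i) rests on combining two upper bounds for $S:=\sum_{x\in\xi}\deg(x)^{2}$ and patching them at the threshold $N_\xi=D^{2}$. The first is purely combinatorial and is the useful one when $N_\xi$ is small: in any loopless graph with $N_\xi$ edges the number of (unordered) pairs of distinct edges sharing an endpoint equals $\sum_x\binom{\deg(x)}{2}$ and is at most $\binom{N_\xi}{2}$; combining this with the handshake identity $\sum_x\deg(x)=2N_\xi$ yields
\begin{equation*}
S\ \le\ N_\xi^{2}+N_\xi .
\end{equation*}
The second is exactly Corollary \ref{edgeCor}, namely $S\le \tfrac{8\sqrt2}{3}\mathfrak{p}\,N_\xi^{3/2}+4\mathfrak{p}\,N_\xi$, which is the useful one for large $N_\xi$. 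Before the case analysis I would record the algebra of $D$: writing $P:=\tfrac{4\sqrt2}{3}\mathfrak{p}$ (so $2P=\tfrac{8\sqrt2}{3}\mathfrak{p}$ and $P^{2}=\tfrac{32}{9}\mathfrak{p}^{2}$), the quantity $D$ is the larger root of $x^{2}-2Px-(4\mathfrak{p}-1)=0$, so that $D^{2}=2PD+4\mathfrak{p}-1$ and $D\ge\sqrt{4\mathfrak{p}-1}>1$; consequently the claimed constant satisfies
\begin{equation*}
c\ :=\ \Bigl(\tfrac{8\sqrt2}{3}+\tfrac4D\Bigr)\mathfrak{p}\ =\ 2P+\tfrac{4\mathfrak{p}}{D}\ =\ \tfrac{D^{2}+1}{D}\ =\ D+\tfrac1D ,
\end{equation*}
whence $c^{2}-4=(D-\tfrac1D)^{2}$, so that $D$ and $\tfrac1D$ are precisely the two roots of $x^{2}-cx+1$.

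With this in hand, the case $N_\xi\ge D^{2}$ is immediate: then $N_\xi^{1/2}\ge D$ gives $4\mathfrak{p}N_\xi\le\tfrac{4\mathfrak{p}}{D}N_\xi^{3/2}$, so Corollary \ref{edgeCor} yields $S\le 2P N_\xi^{3/2}+\tfrac{4\mathfrak{p}}{D}N_\xi^{3/2}=c\,N_\xi^{3/2}$. For $1\le N_\xi<D^{2}$ (the case $N_\xi=0$ being trivial) I would set $s:=N_\xi^{1/2}\in[1,D)\subseteq[\tfrac1D,D]$; since the quadratic $x^{2}-cx+1$ is nonpositive exactly on $[\tfrac1D,D]$, we get $s^{2}-cs+1\le0$, and multiplying by $s^{2}>0$ gives $N_\xi^{2}+N_\xi=s^{4}+s^{2}\le c\,s^{3}=c\,N_\xi^{3/2}$; the combinatorial bound $S\le N_\xi^{2}+N_\xi$ then closes this case. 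This proves (i).

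For part (ii) I would apply (i) with $\xi$ equal to the (almost surely countable) support of $\eta$: by \eqref{e:r} we have $N=N_\eta<\infty$ almost surely, so (i) gives $\sum_{x\in\eta}\deg(x)^{2}\le c\,N^{3/2}$ almost surely, with $c=\bigl(\tfrac{8\sqrt2}{3}+\tfrac4D\bigr)\mathfrak{p}(d)$. Since $N$ is a well-behaved Poisson U-statistic of order $k=2$ whose local version is $N(x,\eta)=\deg(x)/2$, Corollary \ref{DCor} (and the discussion following it) gives $V^{+}=k^{2}\sum_{x\in\eta}N(x,\eta)^{2}=\sum_{x\in\eta}\deg(x)^{2}$ almost surely, so \eqref{edgeIneq} holds with this $c$. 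Finally, as $N\ge0$ and $V^{+}\le c\,N^{3/2}$ with exponent $3/2\in[0,2)$, Corollary \ref{DevIneq2} applies with $\alpha=3/2$ and, since $1-\alpha/2=\tfrac14$, returns precisely the tail estimate \eqref{e:copa}.

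The only genuinely delicate point is the one producing the value of $D$: Corollary \ref{edgeCor} on its own only yields the worse constant $\tfrac{8\sqrt2}{3}+4$, because its linear-in-$N_\xi$ term is not negligible for moderate $N_\xi$ and $N_\xi=1$ is attainable; one has to recognise that the missing range is covered by the elementary inequality $S\le N_\xi^{2}+N_\xi$ and that $c=D+\tfrac1D$ — equivalently, the quadratic defining $D$ — is exactly the threshold at which the two estimates meet. Everything else (the cherry count, the identities for $D$, and the reduction in (ii) to Corollary \ref{DevIneq2}) is routine.
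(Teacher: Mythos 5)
Your proof is correct and follows essentially the same route as the paper: it combines the bound of Corollary \ref{edgeCor} (effective for large $N_\xi$) with the elementary estimate $\sum_x\deg(x)^2\leq N_\xi^2+N_\xi$ (effective for small $N_\xi$), the two bounds crossing exactly at $\sqrt{N_\xi}=D$, which produces the stated constant; part (ii) then follows by applying (i) to the support of $\eta$ and invoking Corollary \ref{DevIneq2} with $\alpha=3/2$, just as in the paper. The only (harmless) differences are presentational: you justify the $N_\xi^2+N_\xi$ bound by counting pairs of adjacent edges rather than by the star-graph extremality, and you replace the paper's monotone-intersection argument by an explicit case split at $N_\xi=D^2$ with the quadratic algebra for $D$ spelled out.
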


\begin{proof}
\noindent{[Proof of (i)}] By Corollary \ref{edgeCor} we have
\[
\sum_{x\in\xi}\deg(x)^2 \leq \left(\frac{8 \sqrt{2}}{3} +\frac{4}{\sqrt{N_\xi}}\right) \mathfrak{p}N_\xi^{3/2}.
\]
Observe that among all graphs with $N_\xi$ edges, the star, i.e. the graph with $\deg(x) = N$ for one vertex $x$ and $\deg(y)=1$ for all other vertices $y$, maximises the sum of the squared degrees. Thus, we also have
\[
\sum_{x\in\xi} \deg(x)^2 \leq N_\xi^2 + N_\xi = \left(\sqrt{N_\xi} + \frac{1}{\sqrt{N_\xi}}\right) N_\xi^{3/2}.
\]
Now, since 
\[
\left(\frac{8 \sqrt{2}}{3} +\frac{4}{\sqrt{N_\xi}}\right) \mathfrak{p}
\]
is monotonically decreasing in $N_\xi$ and
\[
\sqrt{N_\xi} + \frac{1}{\sqrt{N_\xi}}
\]
is monotonically increasing in $N_\xi$,
the minimum of both functions is always less or equal to the value at the intersection of them. Computing this value yields the result.
\smallskip

\noindent{[Proof of (ii)}] This follows directly from part (i) of the statement.
\end{proof}

\subsection{Deviation inequalities for the lower tail}\label{ss:eclower} We now focus on lower tails. In order to do that, we introduce the notation
$$
K := \sup_{x\in \R^d} \mu(B(x, \rho))<\infty,
$$ 
and we define the parameter ${\mathfrak v}$ as
$$
{\mathfrak v}:= 2(K+1)\, \E N.
$$
Our main estimate is the following:

\begin{thm}
For every $r>0$ one has the estimate
\begin{align}\label{e:gglt}
\P(N\leq\E N -r) \leq \exp\left(-\frac{r^2}{2{\mathfrak v}}\right).
\end{align}
\end{thm}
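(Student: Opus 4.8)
The plan is to apply the lower-tail inequality for U-statistics, Corollary \ref{LowerDevIneqUStat}, to $N$. Since $N$ is an almost surely finite, well-behaved Poisson U-statistic of order $k=2$ with non-negative kernel $f(x,y)=\tfrac12\ind\{\lVert x-y\rVert\leq\rho\}$, Corollary \ref{LowerDevIneqUStat} tells us that whenever the quantity $V=\E\int_{\R^d} N(x,\eta+\delta_x)^2\,d\mu(x)$ is finite, one has $\P(N\leq\E N-r)\leq \exp(-r^2/(2k^2 V)) = \exp(-r^2/(8V))$. Thus everything reduces to showing $4V\leq {\mathfrak v} = 2(K+1)\E N$, i.e. $V\leq \tfrac12(K+1)\E N$.

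Next I would compute $V$ explicitly. Recall that the local version is $N(x,\xi+\delta_x)=\sum_{y\in\xi} f(x,y)\cdot 2 = \#\{y\in\xi : 0<\lVert x-y\rVert\leq\rho\}$; more precisely, following the convention $N(x,\eta+\delta_x)=\deg_{\eta+\delta_x}(x)/\dots$, for the order-two kernel $f$ one gets $N(x,\eta+\delta_x)=\eta(B(x,\rho)\setminus\{x\})=\eta(B(x,\rho))$ almost surely, since $\mu$ is non-atomic. Hence
\begin{align*}
V = \E\int_{\R^d} \eta(B(x,\rho))^2\, d\mu(x).
\end{align*}
By the Mecke formula \eqref{e:mecke} applied to $H(\eta,x)=\eta(B(x,\rho))^2$ (note $x$ is integrated against $\mu$, not $\eta$, so I use Fubini plus the Mecke formula only if needed; actually here $x$ is a deterministic integration variable, so I just use Fubini), one has $V=\int_{\R^d}\E[\eta(B(x,\rho))^2]\,d\mu(x)$. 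Since $\eta(B(x,\rho))$ is Poisson with parameter $\mu(B(x,\rho))\leq K$, its second moment is $\mu(B(x,\rho))+\mu(B(x,\rho))^2\leq (1+K)\mu(B(x,\rho))$. Therefore
\begin{align*}
V\leq (1+K)\int_{\R^d}\mu(B(x,\rho))\,d\mu(x) = 2(1+K)\,\E N,
\end{align*}
using $\E N=\tfrac12\int_{\R^d}\mu(B(x,\rho))\,d\mu(x)$.

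Then $8V\leq 16(1+K)\E N = 8{\mathfrak v}$, hmm — this gives a constant that is off by a factor of $4$ from the claimed $2{\mathfrak v}$. So the main obstacle, and the place where I would need to be careful, is the precise bookkeeping of the order-$k$ constant: the definition of the local version $N(x,\xi)$ in \eqref{e:genug} uses the relation $D_xF(\xi-\delta_x)=k\,F(x,\xi)$ with $k=2$, so $F(x,\eta+\delta_x)=\tfrac12 D_xN(\eta)=\tfrac12\,\#\{y\in\eta:\lVert x-y\rVert\leq\rho\}=\tfrac12\eta(B(x,\rho))$. With this correct normalization, $V=\E\int\bigl(\tfrac12\eta(B(x,\rho))\bigr)^2 d\mu(x)=\tfrac14\int\E[\eta(B(x,\rho))^2]\,d\mu(x)\leq\tfrac14(1+K)\int\mu(B(x,\rho))\,d\mu(x)=\tfrac12(1+K)\E N$. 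Then $k^2V=4V\leq 2(1+K)\E N={\mathfrak v}$, and Corollary \ref{LowerDevIneqUStat} gives $\P(N\leq\E N-r)\leq\exp(-r^2/(2k^2V))\leq\exp(-r^2/(2{\mathfrak v}))$, which is exactly \eqref{e:gglt}. So the real content is just verifying the hypotheses of Corollary \ref{LowerDevIneqUStat} (well-behavedness and $V<\infty$, both of which follow from \eqref{e:r}) and doing this constant-tracking computation correctly.
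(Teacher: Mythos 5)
Your argument is correct, and its skeleton is the same as the paper's: both proofs apply Corollary \ref{LowerDevIneqUStat} with $k=2$ and reduce the claim to the bound $4V\leq \mathfrak{v}=2(K+1)\E N$, using that \eqref{e:r} gives well-behavedness and $\E N<\infty$. The only divergence is in how $V$ is evaluated: the paper invokes Proposition \ref{p:p}, writing $4V=\|f_1\|^2_{L^2(\mu)}+4\|f_2\|^2_{L^2(\mu^2)}$ with $f_1(x)=\mu(B(x,\rho))$ and $f_2=f$, and then bounds $\|f_1\|^2\leq 2K\E N$, $4\|f_2\|^2=2\E N$; you instead compute $V$ directly from the local version \eqref{e:genug}, namely $N(x,\eta+\delta_x)=\tfrac12\eta(B(x,\rho))$ almost surely for $\mu$-a.e.\ $x$, and use Fubini together with the Poisson second moment $\E[\eta(B(x,\rho))^2]=\mu(B(x,\rho))+\mu(B(x,\rho))^2\leq(1+K)\mu(B(x,\rho))$. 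The two computations are arithmetically identical (your Poisson moment splits exactly into the paper's two kernel terms), but your route is a bit more elementary and self-contained, bypassing the chaos-expansion machinery behind Proposition \ref{p:p} and delivering $V<\infty$ as a by-product; the paper's route, in exchange, illustrates the general formula $k^2V=\sum_i i\,i!\,\|f_i\|^2_{L^2(\mu^i)}$ that it has just established. One cosmetic remark: your first paragraph briefly uses the wrong normalization of the local version (dropping the factor $\tfrac12$ from the kernel), which you then correctly diagnose and fix; the final constant-tracking, $4V\leq 2(1+K)\E N=\mathfrak{v}$ and hence $\P(N\leq\E N-r)\leq\exp(-r^2/(2\mathfrak{v}))$, is right.
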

\begin{proof} We will freely use the notation and definitions introduced in Section \ref{ss:zut}. Since $N$ is a U-statistic of order $2$ with kernel $f(x,y) = \tfrac{1}{2} \ind\{\lVert x-y \rVert \leq \rho\}$, by virtue of Proposition \ref{p:p} it is sufficient to show that, in this case, $4 V\leq \mathfrak{v}$. In order to do that, observe first that $f_1(x) = \mu(B(x,\rho))$ and $f_2=f$. As a consequence, 
\begin{align*}
&\|f_1\|^2_{L^2(\mu^1)} = \int_{\R^d} \mu(B(x,\rho))^2 \, d\mu(x) \leq 2 K \E N;\\
&\|f_2 \|^2_{L^2(\mu^2)} = \tfrac 12\E N.
\end{align*}
Plugging the above upper bounds into the definition of $V$ yields the inequality $4V \leq \mathfrak{v}$, and therefore the desired conclusion.
\end{proof}

\subsection{Comparison with the literature}
We shall now briefly compare the concentration inequalities for edge counting presented above with the results already existing in the literature. To the best of our knowledge, the only concentration inequalities known so far that apply to the setting of edge counting in random disk graphs over Poisson point configurations, are established in \cite{RST_2013} and \cite{ERS_2015}. Both papers deal with the case where the {intensity measure $\mu$ is finite.}

\smallskip

\noindent{\it Comparison with {\rm \cite{ERS_2015}}}. We shall use some computations from \cite{RST_2013}, where it is explained how the general results about stabilizing functionals from \cite{ERS_2015} apply to random graph statistics. In the special case of edge counting, using the notation and assumptions of the present section, one deduces indeed from \cite[Proposition 5.1]{RST_2013} an estimate of the type
$$
\P( |N - \E N| \geq r) \leq \exp(-I_0(r)), \quad \mbox{where } I_0(r) \sim a r^{1/3},
$$
for some positive constant $a$, as $r\to\infty$. As far as the asymptotic behaviour of $r\mapsto I_0(r)$ is concerned, this result is worse than the estimates that one can obtain from \eqref{e:copa} (where the argument of the exponential bound on the upper tail is asymptotic to  $-r^{1/2}/2c$, and therefore optimal in the sense of Corollary \ref{c:o}), and than those given by \eqref{e:gglt} (where we have proved a Gaussian upper bound on the lower tail).

\smallskip
 
\noindent{\it Comparison with {\rm \cite{RST_2013}}.} Writing $m$ for a median of the law of $N$, in \cite[Theorem 5.2]{RST_2013} one can find an estimate of the type
$$
\P( |N - m | \geq r) \leq \exp(-I_1(r)), \quad\mbox{where }\, I_1(r)\sim b r^{1/2},
$$
as $r\to \infty$, for some positive constant $b$. Note that, as $r\to\infty$, the upper tail bound determined by $I_1$ has the same order as our estimate \eqref{e:copa}, whereas the lower tail estimate is worse than our Gaussian upper bound \eqref{e:gglt}. We also stress that \cite[Theorem 5.2]{RST_2013} has a different nature than our results, since it gives a concentration inequality around the median, and not the expectation. This might be a drawback for applications since the median of the edge count is harder to deal with than the expectation -- which can be easily expressed using the Slivniak-Mecke formula. Finally, a major advantage of our results over those presented in \cite[Theorem 5.2]{RST_2013} is that the latter only applies to disk graphs built on finite intensity measure Poisson processes, whereas our tail estimates merely require that {the number of edges is almost surely finite.}

\subsection{Consistency with CLT}
In the following, we compare the deviation inequality for the upper tail with a CLT that was proven in \cite{RS_2013}. Let $\eta_1$ be a Poisson point process in $\R^d$ with intensity measure $\mu_1$ and fix some radius $\rho > 0$. For any $n\in\N$, let $\eta_n$ be a Poisson point process with intensity measure $\mu_n = n\mu_1$ and denote the number of edges in the corresponding random geometric graph by $N_n$. Assume that $\E N_1 < \infty$ (and hence $\E N_n < \infty$ for all $n$). Then by \cite[Theorem 5.2]{RS_2013}, the sequence of random variables $N_n$ satisfies a central limit theorem, i.e. $(N_n - \E N_n) / \sqrt{\V N_n}$ converges to a standard Gaussian distribution, where $\V N_n$ stands for the variance of $N_n$. Therefore, as $n\to\infty$, the sequence of probabilities $\P(N_n\geq \E N_n + \sqrt{\V N_n} r)$, $n\geq 1$, converges to the quantity 
$$
\frac{1}{\sqrt{2\pi}}\int_r^\infty e^{-y^2/2} dy\leq \frac{e^{-r^2/2}}{r\sqrt{2\pi}}.
$$ 
According to the next result, the asymptotic behavior of the upper tail deviation inequality is consistent with this Gaussian tail.

\begin{thm}
{Let $c>0$ be a constant satisfying \eqref{edgeIneq}.} Then there exists a constant $C>0$ and a sequence $(x_n)_{n\in\N}$ with $x_n \to \infty$ as $n\to\infty$ such that for any $n\in\N$,
\begin{align*}
\exp\left(-\frac{(((\V N_n)^{1/2}r+\E N_n)^{1/4} - (\E N_n)^{1/4})^2}{2c}\right) \leq \exp(-C r^2) \ \ \text{for all} \ \ r\in[0,x_n].
\end{align*}
\end{thm}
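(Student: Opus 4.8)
The plan is to observe that, since $t\mapsto e^{-t}$ is decreasing, the asserted inequality is equivalent to the deterministic estimate
$$
\Big(\big((\V N_n)^{1/2}r+\E N_n\big)^{1/4}-(\E N_n)^{1/4}\Big)^2\ \geq\ 2cC\,r^2,\qquad r\in[0,x_n],
$$
and then to exploit concavity. Writing $\sigma_n:=(\V N_n)^{1/2}$ and $g_n(r):=(\sigma_n r+\E N_n)^{1/4}-(\E N_n)^{1/4}$, the map $g_n$ is concave on $[0,\infty)$ with $g_n(0)=0$, so on any interval $[0,x_n]$ one has the chord bound $g_n(r)\geq (r/x_n)g_n(x_n)\geq 0$. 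Hence it suffices to choose a sequence $x_n\to\infty$ along which the slope $g_n(x_n)/x_n$ stays bounded below by a positive constant $c_1$ independent of $n$: then $g_n(r)^2\geq c_1^2 r^2$ on $[0,x_n]$ and one takes $C:=c_1^2/(2c)$.

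Carrying this out requires the exact polynomial orders of $\E N_n$ and $\V N_n$. From $\mu_n=n\mu_1$ and the Slivniak--Mecke formula one gets $\E N_n=\tfrac12\int\mu_n(B(x,\rho))\,d\mu_n(x)=n^2\,\E N_1$. Since $N_n$ is the order-$2$ U-statistic with $f_1(x)=\mu_n(B(x,\rho))$ and $f_2(x,y)=\tfrac12\ind\{\lVert x-y\rVert\leq\rho\}$, Proposition \ref{p:p} gives $\V N_n=\lVert f_1\rVert^2_{L^2(\mu_n)}+2\lVert f_2\rVert^2_{L^2(\mu_n^2)}=n^3A+n^2\,\E N_1$, where $A:=\int\mu_1(B(x,\rho))^2\,d\mu_1(x)$. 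Picking a point $x_0$ in the support of $\mu_1$, the inclusion $B(x_0,\rho/2)\subseteq B(x,\rho)$ for $x\in B(x_0,\rho/2)$ shows $A\geq\mu_1(B(x_0,\rho/2))^3>0$ and $\E N_1\geq\tfrac12\mu_1(B(x_0,\rho/2))^2>0$; in particular $n^{3/2}\sqrt A\leq\sigma_n\leq n^{3/2}\sqrt{A+\E N_1}$ for every $n\geq1$.

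Finally, choosing $x_n:=\E N_n/\sigma_n$, one has $x_n\geq n^{1/2}\,\E N_1/\sqrt{A+\E N_1}\to\infty$ and $\sigma_n x_n=\E N_n$, so $g_n(x_n)=(2^{1/4}-1)(\E N_n)^{1/4}=(2^{1/4}-1)(n^2\,\E N_1)^{1/4}$ and therefore
$$
\frac{g_n(x_n)}{x_n}=(2^{1/4}-1)(\E N_1)^{-3/4}\,\frac{\sigma_n}{n^{3/2}}\ \geq\ (2^{1/4}-1)(\E N_1)^{-3/4}\sqrt A\ =:\ c_1>0
$$
uniformly in $n$, which closes the argument. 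I expect the only part demanding care to be the bookkeeping that pins down $\E N_n\asymp n^2$ and $\V N_n\asymp n^3$ with explicit positive constants (so that the slope bound is genuinely uniform in $n$ rather than merely asymptotic); the conceptual step --- that concavity forces the quartic-root gap $g_n$ to dominate a linear function, hence $g_n^2$ a Gaussian one, on a window of width $\asymp\E N_n/\sigma_n\to\infty$ --- is short once that is in place.
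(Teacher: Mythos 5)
Your argument is correct, but it takes a genuinely different route from the paper's. The paper rewrites the desired bound (with $A=\sqrt{2Cc}$) as $(\V N_n)^{1/2}r\geq (Ar+(\E N_n)^{1/4})^4-\E N_n$, expands the quartic, divides by $(\E N_n)^{3/4}$, chooses $C$ so that $4A<\alpha^{1/2}\beta^{-3/4}$ using the asymptotics $\E N_n\sim\beta n^2$ and $\V N_n\sim\alpha n^3$ quoted from \cite{RS_2013}, and then defines $x_n$ implicitly as the positive root of the corresponding equation (setting $x_n=0$ if none exists), deducing $x_n\to\infty$ because the coefficients on the right-hand side vanish. You instead use concavity of $g_n(r)=((\V N_n)^{1/2}r+\E N_n)^{1/4}-(\E N_n)^{1/4}$ and the chord bound on the explicitly chosen window $x_n=\E N_n/(\V N_n)^{1/2}$, and you replace the quoted asymptotics by the exact moment identities $\E N_n=n^2\,\E N_1$ and $\V N_n=n^3A+n^2\,\E N_1$ obtained from the Slivnyak--Mecke formula and Proposition \ref{p:p}. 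What your route buys is explicitness and uniformity: a concrete constant $C=c_1^2/(2c)$ with $c_1=(2^{1/4}-1)(\E N_1)^{-3/4}\sqrt{A}$, and a concrete sequence $x_n\gtrsim n^{1/2}$ valid for every $n$, whereas the paper's argument is shorter but leaves $x_n$ implicit (and possibly zero for small $n$) and only controls its growth asymptotically. Two small points you should make explicit when writing this up: positivity of $A$ and of $\E N_1$ relies on $\mu_1(\R^d)>0$ from the standing framework (which your support-point argument uses), and finiteness of $A$ (needed both for your upper bound on $\sigma_n$ and to invoke Proposition \ref{p:p}) follows from the observation at the beginning of Section \ref{s:edge} that condition \eqref{e:r} forces $x\mapsto\mu_1(B(x,\rho))$ to be bounded.
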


\begin{proof}
It was pointed out in \cite{RS_2013} that there are constants $\alpha,\beta > 0$ such that
\begin{align*}
\V N_n \sim \alpha n^3,\\
\E N_n \sim \beta n^2.
\end{align*}
{Let $A = \sqrt{2Cc}$}. Then the desired inequality is equivalent to
\begin{align*}
(\V N_n)^{1/2} r &\geq \left(A r + (\E N_n)^{1/4}\right)^4 - \E N_n\\
&= A^4 r^4 + 4 A^3 r^3 (\E N_n)^{1/4} + 6  A^2 r^2(\E N_n)^{2/4}
+ 4  A r (\E N_n)^{3/4}.
\end{align*}
This holds if and only if
\begin{align*}
(\V N_n)^{1/2}(\E N_n)^{-3/4} - 4 A \geq A^4r^3 (\E N_n)^{-3/4} + 4A^3 r^2 (\E N_n)^{-2/4} + 6 A^2 r (\E N_n)^{-1/4}.
\end{align*}
Now, choose $C>0$ such that {$4A<\alpha^{1/2}\beta^{-3/4}$}. Consider the equality corresponding to the inequality in the above display. For any $n\in\N$ let $x_n$ be the (unique) positive solution of this equality in case such a solution exists, and let $x_n = 0$ otherwise. Then, since the right-hand side of the last display is increasing in $r$, the desired inequality holds for all $r\in[0,x_n]$. Moreover, the left hand side converges to {$\alpha^{1/2}\beta^{-3/4} - 4 A>0$} while $(\E N_n)^{-3/4}, (\E N_n)^{-2/4}, (\E N_n)^{-1/4} \to 0$, as $n\to\infty$. From this it follows that $x_n\to\infty$ as $n\to\infty$.
\end{proof}

\section{Another look at U-Statistics of order two}

In this section, we develop a different {approach } for obtaining deviation inequalities for the upper tail of U-statistics of order 2, that is partially inspired by the results from \cite{HRB, R_2003}. {Throughout this section, we let the assumptions of Section \ref{AppUstat} prevail; in particular, the intensity $\mu$ of $\eta$ is {a non-atomic positive measure on $(\X, \mathcal{X})$}. We begin by generalizing \cite[Theorem 3]{R_2003} to Poisson processes with possibly non-finite intensity measure:

\begin{thm} \label{supThm}
Consider a countable family $\{f_j\}_{j\in J}$ of functions $\X\to[0,1]$ and let
\begin{align*}
G = \sup_{j\in J} \sum_{x\in\eta} f_j(x).
\end{align*}
Assume that $\E G < \infty$. Then for any $\lambda>0$ we have
\begin{align*}
\log \E[\exp(\lambda(G-\E G))] \leq \phi(\lambda)\E G,
\end{align*}
where $\phi(\lambda) = e^\lambda - \lambda - 1$.
\end{thm}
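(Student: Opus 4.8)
The plan is to run a Herbst-type argument based on the modified logarithmic Sobolev inequality of Proposition \ref{EntIneq}, applied with $I=\emptyset$, whose decisive geometric input is a \emph{self-bounding property} of $G$. First I would record the elementary structural facts: choosing a representative as in Remark \ref{representativeRem}, one has $0\le D_xG(\xi)\le 1$ for every $x,\xi$, since (for a simple configuration not already charging $x$) adding a point $x$ replaces each $\sum_{y\in\xi}f_j(y)$ by $\sum_{y\in\xi}f_j(y)+f_j(x)$ with $f_j(x)\in[0,1]$, so that $G(\xi)\le G(\xi+\delta_x)\le G(\xi)+1$; in particular $G$ is non-decreasing. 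Since $\E G<\infty$ is the only integrability hypothesis available, I would then pass to the truncations $G_n:=\min(G,n)$, which are bounded, non-negative, non-decreasing, and still satisfy $0\le D_xG_n\le 1$; hence $\E e^{\lambda G_n}<\infty$ and Proposition \ref{EntIneq} applies to $G_n$ for every $\lambda>0$.

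The core of the argument is the claim that almost surely $\sum_{x\in\eta}\big(G_n(\eta)-G_n(\eta-\delta_x)\big)\le G_n(\eta)$. I would prove it by writing $G_n=\sup_j\min(S_j,n)$ with $S_j(\xi)=\sum_{y\in\xi}f_j(y)$, and using that for each fixed $j$ the functional $\xi\mapsto\min(S_j(\xi),n)$ has increments $\min(S_j(\xi),n)-\min(S_j(\xi-\delta_x),n)$ that are non-negative, bounded above by $f_j(x)$, and non-increasing under enlargement of the configuration; a telescoping argument over any finite subconfiguration $A\subseteq\eta$ then gives $\sum_{x\in A}\big(\min(S_j(A),n)-\min(S_j(A-\delta_x),n)\big)\le\min(S_j(A),n)$. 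Picking, for $\epsilon>0$, an index $j_0$ with $\min(S_{j_0}(\eta),n)\ge G_n(\eta)-\epsilon$ and using $G_n(\eta-\delta_x)\ge\min(S_{j_0}(\eta-\delta_x),n)$, one bounds $\sum_{x\in A}(G_n(\eta)-G_n(\eta-\delta_x))\le G_n(\eta)+|A|\,\epsilon$; letting $\epsilon\downarrow 0$ with $A$ fixed and then taking the supremum over finite $A\subseteq\eta$ yields the claim. I expect this combinatorial self-bounding estimate — together with the bookkeeping needed to handle a possibly infinite $\eta$ through finite subconfigurations — to be the main obstacle; the rest is essentially routine.

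With these ingredients in hand, I would apply Proposition \ref{EntIneq} with $I=\emptyset$ to $G_n$ and $\lambda>0$: since $\phi$ is convex with $\phi(0)=0$ and $D_xG_n(\eta-\delta_x)=G_n(\eta)-G_n(\eta-\delta_x)\in[0,1]$ for $x\in\eta$, the convexity bound $\phi(-\lambda t)\le t\,\phi(-\lambda)$ on $[0,1]$ gives $\Ent(e^{\lambda G_n})\le\phi(-\lambda)\,\E\big[e^{\lambda G_n}\sum_{x\in\eta}(G_n(\eta)-G_n(\eta-\delta_x))\big]\le\phi(-\lambda)\,\E[G_n e^{\lambda G_n}]$, the last step being the self-bounding property. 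Writing $K_n(\lambda)=\E e^{\lambda G_n}$, $L_n=\log K_n$ and using $\lambda-\phi(-\lambda)=1-e^{-\lambda}$, this rearranges into the differential inequality $(1-e^{-\lambda})L_n'(\lambda)\le L_n(\lambda)$ on $(0,\infty)$. To resolve it cleanly (avoiding the logarithmic singularity of a direct integration at $0$), I would note that $H_n(\lambda):=L_n(\lambda)/(e^\lambda-1)$ has numerator of derivative exactly $L_n'(\lambda)(e^\lambda-1)-L_n(\lambda)e^\lambda\le 0$ by the inequality just derived, while $H_n(0^+)=\E G_n$ by L'Hôpital; hence $L_n(\lambda)\le(e^\lambda-1)\E G_n\le(e^\lambda-1)\E G$. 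Finally, letting $n\to\infty$, monotone convergence plus this uniform bound give $\E e^{\lambda G}=\lim_n\E e^{\lambda G_n}\le\exp((e^\lambda-1)\E G)<\infty$, and subtracting $\lambda\E G$ from $\log\E e^{\lambda G}\le(e^\lambda-1)\E G$ yields $\log\E e^{\lambda(G-\E G)}\le\phi(\lambda)\E G$, as claimed.
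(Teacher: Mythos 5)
Your argument is correct and follows essentially the same route as the paper: truncate, apply Proposition \ref{EntIneq} with $I=\emptyset$, combine the convexity bound $\phi(-\lambda t)\le t\,\phi(-\lambda)$ for $t\in[0,1]$ with the self-bounding property $\sum_{x\in\eta}D_xG_n(\eta-\delta_x)\le G_n(\eta)$, solve the resulting differential inequality (the paper outsources this to the proof of Theorem 10 in \cite{M_2000}, which is precisely your $L_n(\lambda)/(e^\lambda-1)$ computation), and remove the truncation by monotone convergence. The only difference is cosmetic: the paper first reduces to finite $J$ and works with an exact maximizing index $j^*$, whereas you treat countable $J$ directly via $\epsilon$-almost-maximizers and a telescoping/monotone-increment argument for $\min(S_j,n)$ — both correctly establish the self-bounding inequality.
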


\begin{proof}
First note that by monotone convergence, we can assume without loss of generality that $|J|<\infty$. For each $n\in\N$ let $G_n = \min(G,n)$. Then $\E (e^{\lambda G_n}) < \infty$, hence Proposition \ref{EntIneq} with $I = \emptyset$ gives
\begin{align}\label{supEntIneq}
\Ent(e^{\lambda G_n}) \leq \E \left(e^{\lambda G_n} \sum_{x\in\eta} \phi(-\lambda D_x G_n(\eta-\delta_x))\right).
\end{align}
Consider some realization of $\eta$. Since we assumed $|J|<\infty$, it follows that for some $j^* \in J$ we have
\begin{align*}
G(\eta) = \sum_{x\in\eta} f_{j^*}(x).
\end{align*}
Now, for any $x\in\eta$ we have
\begin{align*}
0\leq D_x G(\eta - \delta_x) \leq f_{j^*}(x) \leq 1.
\end{align*}
Moreover, if $G(\eta-\delta_x) \geq n$, then $D_xG_n(\eta-\delta_x) = 0$ and if $G(\eta - \delta_x) < n$, then
\begin{align*}
D_xG_n(\eta-\delta_x) = D_xG(\eta-\delta_x) - \max(0,G(\eta)-n).
\end{align*}
From this we obtain
\begin{align*}
\sum_{x\in\eta} D_xG_n(\eta-\delta_x) &\leq \left(\sum_{x\in\eta} f_{j^*}(x)\right) - \max(0,G(\eta) - n)\\ &= G(\eta) - \max(0, G(\eta)-n) = G_n(\eta).
\end{align*}
Since $\phi(-\lambda z) \leq \phi(-\lambda) z$ for $\lambda>0$ and $0\leq z \leq 1$, it follows from the above considerations that
\begin{align*}
\Ent(e^{\lambda G_n}) \leq \phi(-\lambda) \E \left(e^{\lambda G_n}  \sum_{x\in\eta} D_x G_n(\eta-\delta_x)\right)  \leq \phi(-\lambda) \E(e^{\lambda G_n}  G_n).
\end{align*}
Continuing in the same way as in the proof of \cite[Theorem 10]{M_2000} gives
\begin{align}\label{truncSupIneq}
\log \E [\exp(\lambda(G_n - \E G_n))] \leq \phi(\lambda)\E G_n \leq \phi(\lambda)\E G.
\end{align}
{Now, since $\lambda>0$ and $\E G < \infty$, by monotone convergence we have}
\begin{align*}
\lim_{n\to\infty}\E[\exp(\lambda(G_n - \E G_n))] = \E[\exp(\lambda(G - \E G))].
\end{align*}
Invoking (\ref{truncSupIneq}) yields the result.
\end{proof}
We continue with an analytic lemma that is used in the proof of the upcoming theorem, but which might be of independent interest in similar situations. The proof is inspired by the proof of \cite[Corollary 2.12]{L_1999}.
\begin{lem}\label{analyticLem}
For any $z>0$,
\begin{align*}
\sup_{\lambda>0}\left[\lambda z - e^{\lambda^2} + 1\right] \geq \frac{\sqrt{\log(z+1)} z^ {3/2}}{4\sqrt{z} + 8}.
\end{align*}
\end{lem}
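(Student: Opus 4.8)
The plan is to bound the supremum from below simply by evaluating the bracketed expression at one well-chosen value of $\lambda$. Write $g(\lambda):=\lambda z-e^{\lambda^2}+1$; it suffices to produce a single $\lambda=\lambda(z)>0$ with $g(\lambda)$ at least the right-hand side. The obvious attempt $\lambda\asymp\sqrt{\log(z+1)}$ keeps $e^{\lambda^2}$ bounded by a power of $z+1$, but it fails for small $z$: there the target quantity $\frac{\sqrt{\log(z+1)}\,z^{3/2}}{4\sqrt z+8}$ is only of order $z^2$, which cannot absorb $e^{\lambda^2}-1$ unless $\lambda\to 0$ at least as fast as $z$. The choice that reconciles the two regimes is
\[
\lambda:=\frac{\sqrt{z\log(z+1)}}{2(\sqrt z+2)},
\]
engineered so that $\tfrac12\lambda z$ is \emph{exactly} the target bound $\frac{z^{3/2}\sqrt{\log(z+1)}}{4(\sqrt z+2)}$.

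\textbf{Steps.} With this $\lambda$ I would proceed in three short steps. (1) Since $(\sqrt z+2)^2=z+4\sqrt z+4\ge z$, one gets $\lambda^2\le\tfrac14\log(z+1)$, hence $e^{\lambda^2}\le(z+1)^{1/4}$. (2) The elementary inequality $e^s-1\le se^s$ for $s\ge 0$ (equivalently $e^s(1-s)\le 1$, which holds because $s\mapsto e^s(1-s)$ is non-increasing on $[0,\infty)$), applied with $s=\lambda^2$, gives $e^{\lambda^2}-1\le\lambda^2e^{\lambda^2}\le\lambda^2(z+1)^{1/4}$. (3) One verifies $\lambda^2(z+1)^{1/4}\le\tfrac12\lambda z$; inserting the value of $\lambda$ and cancelling the positive factor $\frac{z}{4(\sqrt z+2)}$, this reduces to the single scalar inequality $\sqrt{\log(z+1)}\,(z+1)^{1/4}\le z+2\sqrt z$. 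Combining (2) and (3),
\[
g(\lambda)=\lambda z-(e^{\lambda^2}-1)\ \ge\ \lambda z-\tfrac12\lambda z\ =\ \tfrac12\lambda z\ =\ \frac{\sqrt{\log(z+1)}\,z^{3/2}}{4\sqrt z+8},
\]
which is the claim (the supremum dominating $g(\lambda)$).

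\textbf{Main obstacle.} The only genuinely non-routine point is the scalar inequality in step (3), namely $\sqrt{\log(z+1)}\,(z+1)^{1/4}\le z+2\sqrt z$ for all $z>0$. Substituting $t=z+1\ge 1$ it reads $\sqrt{\log t}\,t^{1/4}\le\sqrt{t-1}\,(\sqrt{t-1}+2)$, and since $\log t\le t-1$ yields $\sqrt{\log t}\le\sqrt{t-1}$, it is enough to show $t^{1/4}\le\sqrt{t-1}+2$. For $1\le t\le 16$ this is immediate because $t^{1/4}\le 2$, and for $t\ge 16$ one uses $(t-1)^2\ge t$ (i.e.\ $t^2-3t+1\ge 0$) to get $\sqrt t\le t-1$ and hence $t^{1/4}\le\sqrt{t-1}$. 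This closes the argument; everything else is bookkeeping.
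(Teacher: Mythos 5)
Your proof is correct. All three steps check out: $\lambda=\frac{\sqrt{z\log(z+1)}}{2(\sqrt z+2)}$ is positive for every $z>0$ and indeed makes $\tfrac12\lambda z$ equal to the right-hand side; the bound $\lambda^2\le\tfrac14\log(z+1)$ follows from $(\sqrt z+2)^2\ge z$; the inequality $e^s-1\le se^s$ is valid for $s\ge0$; and the reduced scalar inequality $\sqrt{\log(z+1)}\,(z+1)^{1/4}\le z+2\sqrt z$ is correctly established via $\log t\le t-1$ and $t^{1/4}\le\sqrt{t-1}+2$ (the exact factor you name when "cancelling" is slightly off, but the reduction itself is elementary and the stated reduced inequality is the right one). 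Your route differs from the paper's in a genuine way: the paper also lower-bounds the supremum by a test point, but it uses three regimes ($0<z\le4$, $4<z\le12$, $z>12$) with the regime-dependent choices $\lambda=z/4$, $z/8$, $\sqrt{\log z}$, supported in the first two cases by the observation that $1-e^{\lambda^2}\ge -a\lambda^2$ on a suitable interval and in the third by a monotonicity analysis of an auxiliary function $h(z)$ (including a derivative estimate). You instead use a single formula for $\lambda$, engineered so the target is exactly $\tfrac12\lambda z$, and reduce everything to $e^s-1\le se^s$, $e^{\lambda^2}\le(z+1)^{1/4}$, and one elementary scalar inequality; the only case split left is the harmless $t\le16$ versus $t\ge16$ inside that scalar inequality. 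Your argument is more unified and somewhat shorter, while the paper's piecewise choices make each individual regime's computation very explicit; both yield the same bound.
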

\begin{proof}
We begin with a preparing observation. Let $a>1$ and consider the map $\lambda \mapsto 1 - e^{\lambda^2} + a \lambda^2$. Then this map takes $0$ to $0$ and it has a unique local extremum (a maximum) at $\sqrt{\log(a)}$ on the positive reals. Hence, whenever $1- e^{x^2}+ax^2 \geq 0$ for some $x>0$, we have $1-e^{\lambda^2} \geq -a \lambda^2$ for any $\lambda\in(0,x]$.

Now assume that $0<z\leq 4$. Then we take $\lambda = z/4$. Since $\lambda\leq 1$ and $1-e + 2\geq 0$, the above observation implies
\begin{align*}
1-e^{\lambda^2} \geq -2\lambda^2.
\end{align*}
Hence, we have
\begin{align*}
\lambda z - e^{\lambda^2} + 1 \geq 
\lambda z -2\lambda^2 = \frac{z^2}{8} \geq \frac{\sqrt{\log(z+1)} z^ {3/2}}{4\sqrt{z} + 8}.
\end{align*}
For $4<z\leq 12$ we take $\lambda = z / 8$. Then $\lambda\leq 3/2$ and since $1-e^{9/4} + 9\geq 0$, the initial observation gives
\begin{align*}
1-e^{\lambda^2}\geq -4 \lambda^2.
\end{align*}
Thus,
\begin{align*}
\lambda z - e^{\lambda^2} + 1 \geq 
\lambda z -4\lambda^2 = \frac{z^2}{16} \geq \frac{\sqrt{\log(z+1)} z^ {3/2}}{4\sqrt{z} + 8}.
\end{align*}

It remains to prove that the result holds for $z>12$. Here we can take $\lambda = \sqrt{\log(z)}$ and see that the supremum is lower bounded by
\begin{align*}
z(\sqrt{\log(z)} - 1) + 1.
\end{align*}
Now, since $\sqrt{\log(z)} - \sqrt{\log(z+1)}$ is increasing for $z\geq 12$, we have
\begin{align*}
\sqrt{\log(z)} \geq \sqrt{\log(z+1)} + A,
\end{align*}
where $A=\sqrt{\log(12)} - \sqrt{\log(13)} < 0$. Hence,
\begin{align*}
z(\sqrt{\log(z)} - 1) + 1 \geq z(\sqrt{\log(z+1)} + A - 1) + 1.
\end{align*}
We claim that
\begin{align*}
z(\sqrt{\log(z+1)} + A - 1) + 1 \geq \frac{1}{4}z(\sqrt{\log(z+1)}) \ \ \text{for} \ \ z>12.
\end{align*}
This will imply the result since
\begin{align*}
\frac{1}{4}z(\sqrt{\log(z+1)}) \geq \frac{1}{4}\frac{z(\sqrt{\log(z+1)}) \sqrt{z}}{\sqrt{z} + 2} = \frac{\sqrt{\log(z+1)}z^{3/2}}{4\sqrt{z} + 8}.
\end{align*}
To prove the claim, define
\begin{align*}
h(z) &= z(\sqrt{\log(z+1)} + A - 1) + 1 -\frac{1}{4}z(\sqrt{\log(z+1)})\\
&= \frac{3}{4}z \sqrt{\log(z+1)} + z(A - 1) + 1.
\end{align*}
The derivative of $h$ satisfies
\begin{align*}
h'(z) = \frac{3}{4} \left(\frac{z+ 2 \log(z+1) (z+1)}{2 \sqrt{\log(z+1)} (z+1)}\right) + A - 1 \geq \frac{3}{4} \sqrt{\log(z+1)} + A - 1.
\end{align*}
Since the right hand side is increasing in $z$ and positive for $z = 12$, we have that $h'(z)\geq 0$ for any $z>12$. So $h$ is increasing for $z>12$ and since also $h(12)\geq 0$, it follows that $h(z)\geq 0$ for all $z>12$. This proves the claim and concludes the proof of the result.
\end{proof}

\begin{thm}\label{UStatDevIneq}
{Let $F$ be a $U$-statistic} of order $2$ with kernel $f\geq 0$ such that $\E F<\infty$. Assume that there is a countable family $\{g_j\}_{j\in J}$ of functions $\X\to[0,1]$ and a constant $c>0$ such that
\begin{align*}
G = \sup_{j\in J} \sum_{x\in\eta} g_j(x)
\end{align*}
satisfies almost surely
\begin{align*}
\sup_{y\in\eta} \sum_{x\in\eta\setminus y} f(y,x) \leq c G
\end{align*}
and $\E G < \infty$. Then for any $r>0$ we have
\begin{align*}
\P(F \geq \E F+r) &\leq \exp\left( -\E (G)\ \chi\left(\frac{\sqrt{\E F + r} - \sqrt{\E F}}{\sqrt{4c}\ \E G}\right)\right),
\end{align*}
where
\begin{align*}
\chi(z)= \frac{\sqrt{\log(z+1)} z^ {3/2}}{4\sqrt{z} + 8}.
\end{align*}
\end{thm}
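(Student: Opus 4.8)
The plan is to recognise that the hypotheses force a pointwise bound of the form $V^+\le 4c\,G\,F$, to feed this into Corollary~\ref{cor2} with exponent $\alpha=1$, and then to combine the resulting control of the Laplace transform of $\sqrt F$ with the exponential moment estimate for $G$ from Theorem~\ref{supThm} and the analytic inequality of Lemma~\ref{analyticLem}. First I would note that, since $f\ge 0$ and $\E F<\infty$, the $U$-statistic $F$ is non-negative and almost surely finite; hence by Corollary~\ref{DCor} and the identity stated immediately after it, almost surely
\begin{align*}
V^+=4\sum_{x\in\eta}F(x,\eta)^2,\qquad\text{where}\qquad F(x,\eta)=\sum_{y\in\eta\setminus x}f(x,y).
\end{align*}
Using $\sum_{x\in\eta}F(x,\eta)^2\le\bigl(\sup_{x\in\eta}F(x,\eta)\bigr)\sum_{x\in\eta}F(x,\eta)$, the identity $\sum_{x\in\eta}F(x,\eta)=\sum_{(x,y)\in\eta_{\neq}^2}f(x,y)=F$, and the standing hypothesis $\sup_{y\in\eta}\sum_{x\in\eta\setminus y}f(y,x)\le cG$, this yields $V^+\le 4c\,G\,F$ almost surely.

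Next I would invoke Theorem~\ref{supThm}, which gives $\log\E[\exp(\nu G)]\le(e^{\nu}-1)\,\E G<\infty$ for every $\nu>0$; in particular $\E\exp(4c\lambda G/\theta)<\infty$ for all $\lambda,\theta>0$, so Corollary~\ref{cor2} applies to $F$ with $\alpha=1$ and with the role of its random variable played by $4cG$. The key is to choose $\theta=1/\lambda$, so that $\lambda\theta=1<2$, the condition $\lambda<2/\theta$ holds, the prefactor $\tfrac{\lambda\theta}{2-\lambda\theta}$ collapses to $1$, and $4c\lambda/\theta=4c\lambda^2$. Corollary~\ref{cor2} then gives $\E\sqrt F<\infty$ and, for every $\lambda>0$,
\begin{align*}
\log\E\bigl[\exp\bigl(\lambda(\sqrt F-\E\sqrt F)\bigr)\bigr]\le(e^{4c\lambda^2}-1)\,\E G.
\end{align*}

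Finally, since $F\ge 0$ and $\E\sqrt F\le\sqrt{\E F}$ by Jensen, for $r>0$ I would write $\P(F\ge\E F+r)=\P\bigl(\sqrt F-\E\sqrt F\ge\sqrt{\E F+r}-\E\sqrt F\bigr)\le\P(\sqrt F-\E\sqrt F\ge t)$ with $t:=\sqrt{\E F+r}-\sqrt{\E F}>0$. Applying the exponential Markov inequality together with the previous display, and then substituting $s=2\sqrt c\,\lambda$, gives for every $s>0$
\begin{align*}
\P(F\ge\E F+r)\le\exp\!\left(-\E G\,\bigl(sz-e^{s^2}+1\bigr)\right),\qquad z:=\frac{\sqrt{\E F+r}-\sqrt{\E F}}{\sqrt{4c}\,\E G}.
\end{align*}
Optimising over $s>0$ and invoking Lemma~\ref{analyticLem}, which bounds $\sup_{s>0}(sz-e^{s^2}+1)$ below by $\chi(z)$, produces exactly the claimed estimate; the degenerate case $\E G=0$ can be dismissed at once, since it forces $F=0$ almost surely. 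I expect the only subtle points to be the choice $\theta=1/\lambda$ that trivialises the Corollary~\ref{cor2} prefactor and the bookkeeping in the change of variables $s=2\sqrt c\,\lambda$; everything else is routine.
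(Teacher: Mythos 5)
Your proposal is correct and follows essentially the same route as the paper: the pointwise bound $V^+\leq 4cGF$, Corollary \ref{cor2} combined with Theorem \ref{supThm} (your choice $\theta=1/\lambda$ is exactly what realises the paper's infimum over $\theta$), Markov's inequality at the level of $\sqrt{F}$ with Jensen, and finally Lemma \ref{analyticLem} after the substitution $s=2\sqrt{c}\,\lambda$. The only differences are cosmetic: you spell out the elementary derivation of $V^+\leq 4cGF$ and dispose of the degenerate case $\E G=0$, which the paper leaves implicit.
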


\begin{proof}
The assumptions imply {that almost surely}
\begin{align*}
V^+ = 4 \sum_{y\in\eta}\left(\sum_{x\in\eta\setminus y} f(y,x)\right)^2 \leq 4cG F.
\end{align*}
By Corollary \ref{cor2} and Theorem \ref{supThm} this gives for any $\lambda> 0$,
\begin{align*}
\log \E [\exp(\lambda(\sqrt{F} - \E \sqrt{F}))] &\leq  \inf_{\theta\in(0,2/\lambda)} \frac{\lambda\theta}{2-\lambda\theta} \log \E \left[\exp\left(\frac{4c\lambda}{\theta} G\right)\right]\\
&\leq \inf_{\theta\in(0,2/\lambda)} \frac{\lambda\theta}{2-\lambda\theta} \E G (\exp(4c\lambda/\theta) - 1)\\
&\leq \E G (\exp(4c\lambda^2) - 1).
\end{align*}
Let $r>0$. Then, using the above computation and Markov's inequality, we obtain for any $\lambda>0$,
\begin{align*}
\P(F \geq \E F + r) &\leq \P(e^{\lambda(\sqrt{F} - \E\sqrt{F})} \geq e^{\lambda( \sqrt{\E F + r} - \sqrt{\E F})})\\
&\leq \exp\left(\E G (\exp(4c\lambda^2) - 1) - \lambda( \sqrt{\E F + r} - \sqrt{\E F})\right).
\end{align*}
Hence, writing $z = (\sqrt{\E F + r} - \sqrt{\E F})/(\sqrt{4c}\E G)$ and substituting $\lambda$ by $\lambda / \sqrt{4c}$, we obtain
\begin{align*}
\P(F\geq \E F + r) \leq \exp\left(-\E G \sup_{\lambda>0} \left[\lambda z -\exp(\lambda^2) + 1 \right]\right).
\end{align*}
The result now follows from Lemma \ref{analyticLem}.
\end{proof}

\subsection{Length power functionals}\label{ss:lpf} As an application, we now focus on length power functionals in random geometric graphs. These estimates contain as special case the edge counting statistics that we studied in Section \ref{s:edge}. We will see in particular that one can take advantage of the upper tail estimate stated in Theorem \ref{UStatDevIneq} in order to provide an alternate bound to that appearing in \eqref{e:copa}, which actually displays a {\it strictly faster} rate of decay in $r$.

As we did in Section \ref{s:edge}, we consider a Poisson measure on $\R^d$ with $\sigma$-finite {and non-atomic Borel intensity measure $\mu$}. We let $\rho>0$ be some radius and consider again the disk graph $\mathfrak{G}(\eta)$ associated with $\eta$. For any $\alpha\in[0,1]$ the {\it length power functional} $L^{(\alpha)}$ is the $U$-statistic of order $2$ with kernel 
\begin{align*}
f_\alpha(x,y) = \tfrac 12 \ind\{\lVert x - y \rVert\leq \rho\} \lVert x - y \rVert^\alpha.
\end{align*}
Note that $L^{(0)} = N$, the number of edges in $\mathfrak{G}(\eta)$, and $L^{(1)}$ is just the {(total edge)} length of the graph. One easily sees that
\begin{align*}
\sup_{y\in\eta} \sum_{x\in\eta\setminus y} f_\alpha(x,y) \leq 2^{d-1} \rho^\alpha \sup_{j\in \N^d} \sum_{x\in\eta} \ind\{x \in[0,2\rho]^d + 2\rho j\}.
\end{align*}
It is also straightforward to check that, if $\E N<\infty$, then the expectation of the right-hand side of the above inequality is finite. We stress also that, if $\E N<\infty$, then $L^{(\alpha)}$ is trivially well-behaved for every $\alpha\in [0,1]$; see Section \ref{s:representative}. The following consequence of Theorem \ref{UStatDevIneq} therefore holds.
\begin{cor}\label{c:we} Let the above notation prevail, fix $\alpha\in [0,1]$, assume that $\E N<\infty$, and let
\begin{align*}
G = \sup_{j\in \N^d} \eta([0,2\rho]^d + 2\rho j).
\end{align*}
Then, $\E G<\infty$ and, for all $r>0$,
\begin{align}\label{e:cd}
\P(L^{(\alpha)} \geq \E L^{(\alpha)} + r) \leq \exp\left( -\E (G)\ \chi\left(\frac{\sqrt{\E L^{(\alpha)} + r} - \sqrt{\E L^{(\alpha)}}}{\sqrt{2^{d+1} \rho^\alpha} \ \E G}\right)\right),
\end{align}
where $\chi(z)$ is as in Theorem \ref{UStatDevIneq}.
\end{cor}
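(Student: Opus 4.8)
The plan is to obtain \eqref{e:cd} as a direct application of Theorem \ref{UStatDevIneq} to the $U$-statistic $F = L^{(\alpha)}$ of order $2$, taking as the countable dominating family the collection $\{g_j\}_{j\in\N^d}$ of $\{0,1\}$-valued functions $g_j(x) = \ind\{x\in[0,2\rho]^d+2\rho j\}$; for this family, $\sup_{j\in\N^d}\sum_{x\in\eta}g_j(x)$ is precisely the random variable $G$ appearing in the statement.

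First I would verify the hypotheses of Theorem \ref{UStatDevIneq}. The kernel $f_\alpha$ is non-negative, and $\E L^{(\alpha)}<\infty$: since $\lVert x-y\rVert\le\rho$ on the support of the indicator in $f_\alpha$, one has the pointwise bound $f_\alpha(x,y)\le\rho^\alpha\cdot\tfrac12\ind\{\lVert x-y\rVert\le\rho\}$, whence $\E L^{(\alpha)}\le\rho^\alpha\,\E N$, which is finite by hypothesis; this also makes $L^{(\alpha)}$ almost surely finite and well-behaved, cf. Section \ref{s:representative}. The domination hypothesis of Theorem \ref{UStatDevIneq} is exactly the inequality displayed immediately before the statement, namely $\sup_{y\in\eta}\sum_{x\in\eta\setminus y}f_\alpha(x,y)\le 2^{d-1}\rho^\alpha\,G$, so it holds with $c=2^{d-1}\rho^\alpha$; and the remaining requirement $\E G<\infty$ is the companion finiteness claim for the right-hand side of that display. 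For the latter I would use the comparison with $N$ alluded to there: choosing an integer $m\ge 2\sqrt d$, every cube $[0,2\rho]^d+2\rho j$ splits into $m^d$ subcubes of diameter at most $\rho$, any two distinct points of $\eta$ lying in a common subcube form an edge of $\mathfrak{G}$, and $k\le\binom{k}{2}+1$ for all integers $k\ge 0$; summing over the subcubes yields $G\le N+m^d$ almost surely, hence $\E G\le\E N+m^d<\infty$, with no need to assume $\mu(\R^d)<\infty$.

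It then only remains to substitute $c=2^{d-1}\rho^\alpha$ into the conclusion of Theorem \ref{UStatDevIneq}: since $\sqrt{4c}=\sqrt{2^{d+1}\rho^\alpha}$, the resulting bound is exactly \eqref{e:cd}, with $\chi$ as defined there. I do not anticipate any real obstacle. The only mildly delicate points are the covering estimate underlying the displayed domination inequality — that a Euclidean ball of radius $\rho$ meets at most $2^d$ of the grid cubes $[0,2\rho]^d+2\rho j$ — and the verification that $\E G<\infty$ in the possibly infinite-intensity regime; both are settled by the elementary arguments indicated above.
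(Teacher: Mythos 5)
Your proposal is correct and follows essentially the same route as the paper: the corollary is exactly Theorem \ref{UStatDevIneq} applied to $L^{(\alpha)}$ with the dominating family $g_j=\ind\{\cdot\in[0,2\rho]^d+2\rho j\}$ and $c=2^{d-1}\rho^\alpha$, so that $\sqrt{4c}=\sqrt{2^{d+1}\rho^\alpha}$. The only ingredients the paper leaves as remarks — the bound $\E L^{(\alpha)}\leq\rho^\alpha\E N$, the displayed domination inequality, and $\E G<\infty$ — are the ones you verify, and your pointwise comparison $G\leq N+m^d$ (via subcubes of diameter at most $\rho$ and $k\leq\binom{k}{2}+1$) is a valid way to settle the last point without any finiteness assumption on $\mu$.
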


\begin{rem} The right-hand side of \eqref{e:cd} has the form $\exp(-I(r))$, where $I(r) \sim b \sqrt{r\log r}$, for some $b>0$, as $r\to \infty$. Such a rate of decay is better than the one we can deduce from \cite[Proposition 5.1]{RST_2013} (which is indeed a translation of the results from \cite{ERS_2015}), that applies to the case where $\mu$ is a multiple of the restriction of the Lebesgue measure to a convex body, and implies an upper bounds of the form $\exp(-I_0(r))$, with $I_0(r) \sim b_0 r^{1/3}$. Our result provides also a rate of decay that is faster than the one appearing in \cite[Theorem 5.5]{RST_2013}, where the bound has the form $\exp(-I_1(r))$, with $I_1(r) \sim b_1 r^{1/2}$. It is remarkable that, in the case $\alpha =0$ and as far as the rate of decay (as $r\to\infty$) is concerned, the estimate \eqref{e:cd} is also strictly better than \eqref{e:copa}, and that this comes at the cost of somewhat more complicated constants. Finally, we observe that the asymptotic relation $I(r) \sim b \sqrt{r\log r}$ is consistent with Proposition \ref{p:ny}.

\end{rem}

\subsection{Length in more general graph models}
In the following, we consider a slightly more general model of random geometric graphs. For this, let $\eta$ be a Poisson point process on $\R^d$ {with $\sigma$-finite and non-atomic Borel intensity measure $\mu$}, and let $\rho:\R^d\to\R_+$ be given by 
\begin{equation}\label{e:rho}
\rho(x) =\left( \frac{1}{ \lVert x \rVert + 1}\right)^{\gamma},
\end{equation} 
for some $\gamma>0$. We define $\mathfrak{H}(\eta)$ as the graph with vertex set $\eta$ and an edge between vertices $x,y\in\eta$ whenever $0<\lVert x-y \rVert \leq \rho(x)+\rho(y)$. Note that the graph $\mathfrak{H}= \mathfrak{H}(\eta)$ is obtained by implementing the following two-step procedure: (a) for every $x\in \eta$, draw the closed ball {$B(x,\rho(x))$}, centered at $x$ and with radius $\rho(x)$, and (b) connect two distinct points $x,y\in \eta$ with an edge, if and only if $$B(y,\rho(y))\cap B(x,\rho(x)) \neq \emptyset.$$ In other words, $\mathfrak{H}$ is the {\it intersection graph} of the balls centered at the points of $\eta$ with (decaying) radii given by $\rho(x), x\in\eta$. We will see that this model allows situations where $\mathfrak{H}$ has almost surely infinitely many edges but still a finite length. Interestingly, even if there are infinitely many edges, the length can have an exponentially decaying upper tail. Before we analyse concentration properties of the length, we present an illustration of how the considered graph might look like in the plane.

\begin{figure}[h]
\includegraphics[scale=0.26]{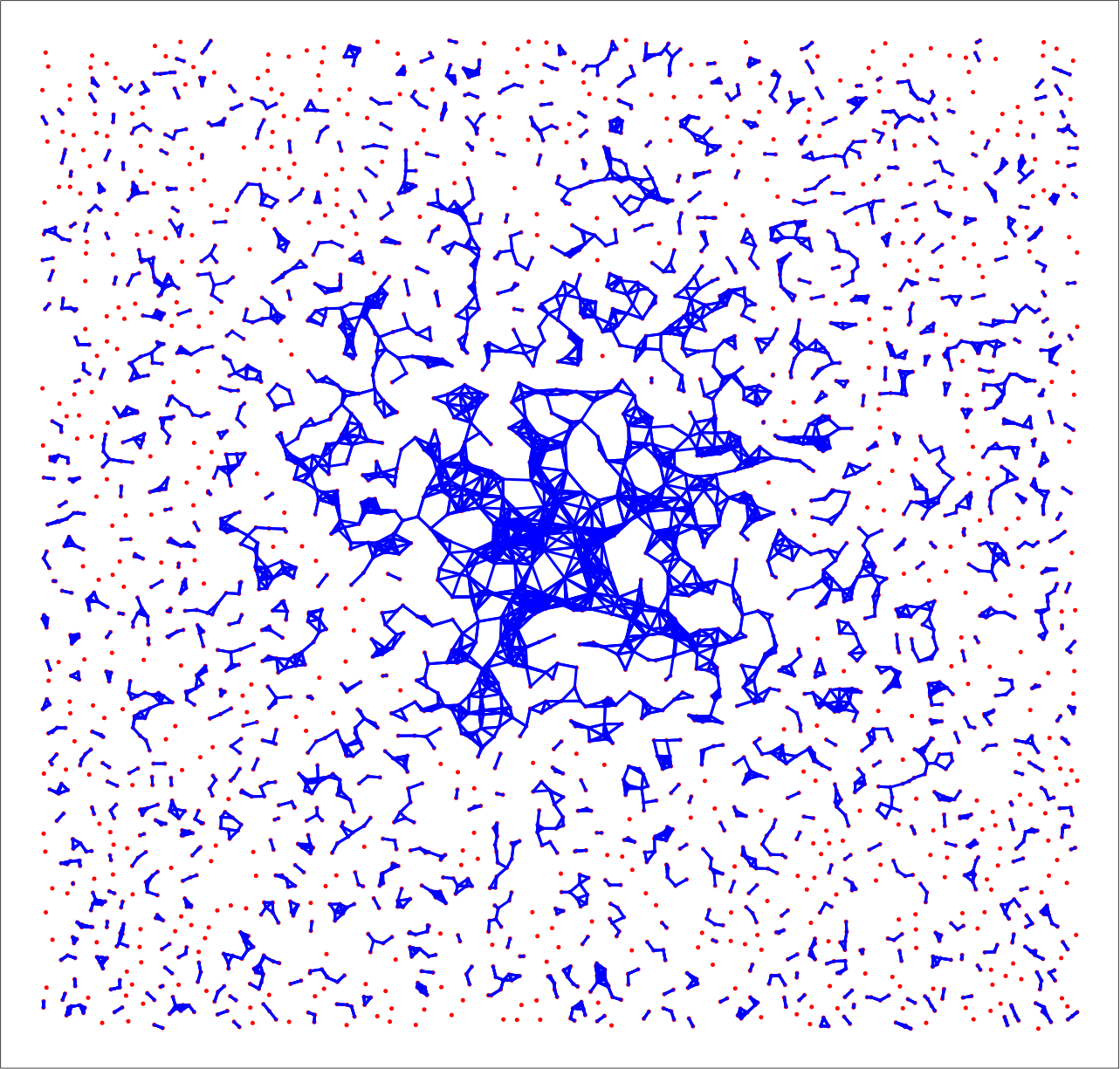}
\caption{A realisation of the intersection graph $\mathfrak{H}(\eta)$}
\end{figure}

Let $L$ be the length of $\mathfrak{H}$. Then $L$ is a U-statistic of order $2$ with kernel 
\begin{align*}
f_L(x,y) = \tfrac 12 \lVert x-y \rVert \ind\{\lVert x-y \rVert \leq \rho(x) + \rho(y)\}.
\end{align*}
{Moreover, if $\mu$ guarantees that almost surely $L<\infty$, then $L$ is even well-behaved. As a consequence of Theorem \ref{UStatDevIneq} we obtain the following result.
\begin{cor} \label{lengthConc}
Let $c = 3^\gamma + 1$ and define
\begin{align}\label{defG}
G = \sup_{x\in\Q^d} \rho(x) \eta(B(x, c\rho(x))).
\end{align}
Assume that $\E G<\infty$ and $\E L < \infty$. Then for any $r\geq 0$,
\begin{align*}
\P(L\geq \E L + r) \leq \exp\left( -\E (G)\ \chi\left(\frac{\sqrt{\E L + r} - \sqrt{\E L}}{\sqrt{2c}\ \E G}\right)\right),
\end{align*}
where $\chi$ is defined as in Theorem \ref{UStatDevIneq}.
\end{cor}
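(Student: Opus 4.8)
The plan is to derive Corollary~\ref{lengthConc} from Theorem~\ref{UStatDevIneq}. First note that $L$ is a non-negative $U$-statistic of order $2$ with kernel $f_L$, and that $\E L<\infty$ forces $L<\infty$ almost surely, so that $L$ is well-behaved in the sense of Section~\ref{s:representative}. As the countable family $\{g_j\}_{j\in J}$ required by Theorem~\ref{UStatDevIneq} I would take $J=\Q^d$ and
\begin{align*}
g_{x_0}(x):=\rho(x_0)\,\ind\{x\in B(x_0,c\rho(x_0))\},\qquad x_0\in\Q^d,\ \ c=3^\gamma+1.
\end{align*}
Since $\rho\leq 1$ we have $0\leq g_{x_0}\leq 1$, and $\sum_{x\in\eta}g_{x_0}(x)=\rho(x_0)\,\eta(B(x_0,c\rho(x_0)))$, so that the supremum of these sums over $x_0\in\Q^d$ is exactly the random variable $G$ from \eqref{defG}; the hypotheses $\E G<\infty$ and $\E L<\infty$ are granted. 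What remains is the almost sure domination
\begin{align*}
\sup_{y\in\eta}\sum_{x\in\eta\setminus y}f_L(y,x)\leq \tfrac{c}{2}\,G,
\end{align*}
after which Theorem~\ref{UStatDevIneq}, applied with internal constant $c/2$ (so that its exponential bound features $\sqrt{4\cdot(c/2)}=\sqrt{2c}$), gives precisely the claimed inequality.

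To obtain the domination I would first record a deterministic observation. If $x,y\in\R^d$ are joined by an edge of $\mathfrak{H}$, that is $0<\lVert x-y\rVert\leq\rho(x)+\rho(y)$, then $\lVert x-y\rVert\leq 2$ (because $\rho\leq 1$), hence $\lVert y\rVert\leq\lVert x\rVert+2$, so $\lVert y\rVert+1\leq 3(\lVert x\rVert+1)$; taking $\gamma$-th powers and reciprocals yields $\rho(x)\leq 3^\gamma\rho(y)$, with strict inequality whenever $\lVert x\rVert>0$. Since almost surely no point of $\eta$ equals the origin ($\mu(\{0\})=0$), it follows that, almost surely, every neighbour $x$ of a point $y\in\eta$ satisfies $\lVert x-y\rVert\leq\rho(x)+\rho(y)<(3^\gamma+1)\rho(y)=c\rho(y)$, i.e. lies in the open ball $B_y$ of radius $c\rho(y)$ centred at $y$. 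Therefore
\begin{align*}
\sum_{x\in\eta\setminus y}f_L(y,x)=\frac12\sum_{x\in\eta\setminus y}\lVert x-y\rVert\,\ind\{\lVert x-y\rVert\leq\rho(x)+\rho(y)\}\leq\frac{c\rho(y)}{2}\,\eta(B_y).
\end{align*}

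It then suffices to show that $\rho(y)\,\eta(B_y)\leq G$ for every $y\in\eta$, almost surely; I expect this to be the only delicate point, because $y$ is almost surely irrational and, when $\lVert y\rVert$ is large, no rational centre $x_0\neq y$ with $\rho(x_0)\geq\rho(y)$ makes $B(x_0,c\rho(x_0))$ contain $B(y,c\rho(y))$, so a direct comparison is impossible. The remedy is an approximation argument based on local finiteness of $\eta$: the finitely many $\eta$-points in $B_y$ all lie at distance \emph{strictly} below $c\rho(y)$ from $y$, so by continuity of $\rho$ there is, for every rational $x_0$ close enough to $y$, a closed ball $B(x_0,c\rho(x_0))$ still containing all of them, whence $\rho(x_0)\,\eta(B(x_0,c\rho(x_0)))\geq\rho(x_0)\,\eta(B_y)$; letting $x_0\to y$ along $\Q^d$ and using $\rho(x_0)\to\rho(y)$ gives $G\geq\rho(y)\,\eta(B_y)$. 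Combining the last two displays and taking the supremum over $y\in\eta$ yields the required domination, and one application of Theorem~\ref{UStatDevIneq} finishes the proof.
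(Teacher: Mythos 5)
Your proposal is correct and follows essentially the same route as the paper's proof: you bound the local sum $\sum_{x\in\eta\setminus y}f_L(y,x)$ by $\tfrac c2\,\rho(y)$ times the number of points of $\eta$ in a ball of radius $c\rho(y)$ around $y$ (using the same $3^\gamma$ comparison of radii), replace the centres $y\in\eta$ by rational centres so as to obtain the countable family $\{g_{x_0}\}_{x_0\in\Q^d}$ with $G$ as in \eqref{defG}, and apply Theorem \ref{UStatDevIneq} with constant $c/2$, which produces the factor $\sqrt{2c}$. If anything, you are more careful than the paper at the one delicate point, namely passing from $\sup_{y\in\eta}$ to $\sup_{x_0\in\Q^d}$: your strict-inequality/open-ball argument (using $\mu(\{0\})=0$, local finiteness and continuity of $\rho$) supplies a justification that the paper leaves implicit.
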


\begin{proof}
Let $x,y\in\R^d$ be such that $\lVert x-y \rVert \leq \rho(x) + \rho(y)$. Then we have $\lvert\lVert x \rVert - \lVert y \rVert \rvert \leq 2$ since $\rho$ is upper bounded by $1$. Hence,
\begin{align}\label{rhoBound}
\frac{\rho(y)}{\rho(x)} = \left(\frac{\lVert x\rVert + 1}{\lVert y\rVert + 1}\right)^\gamma \leq \left(\frac{\lVert y\rVert + 3}{\lVert y\rVert + 1}\right)^\gamma \leq 3^\gamma.
\end{align}
Therefore, $\lVert x-y \rVert \leq (3^\gamma + 1)\rho(x)$. It follows that the local version of $L$ satisfies
\begin{align}\label{lengthObs}
\nonumber L(x,\eta) &= \tfrac 12 \sum_{y\in\eta\setminus x} \lVert x-y\rVert \ind\{\lVert x-y \rVert \leq \rho(x) + \rho(y)\}\\
&\leq \tfrac {3^\gamma + 1}{2} \sum_{y\in\eta\setminus x} \rho(x) \ind\{\lVert x-y \rVert \leq (3^\gamma + 1)\rho(x)\}.
\end{align}
For $x,y\in\R^d$ we define $g_x(y) = \rho(x) \ind\{\lVert x-y \rVert \leq (3^\gamma + 1)\rho(x)\}$. Then the above reasoning gives {that, almost surely,}
\begin{align*}
\sup_{x\in\eta} \sum_{y\in\eta\setminus x} f_L(x,y) \leq \tfrac {3^\gamma + 1}{2} \sup_{x\in\Q^d} \sum_{y\in\eta} g_x(y).
\end{align*}
Let
\begin{align*}
G = \sup_{x\in\Q^d} \sum_{y\in\eta} g_x(y).
\end{align*}
We see that Theorem \ref{UStatDevIneq} applies to $L$ whenever $\E L, \E G<\infty$ and this concludes the proof.
\end{proof}

Next we will prove a sufficient condition for the finiteness of the expectations appearing in Corollary \ref{lengthConc} for the case when the Poisson process $\eta$ is homogeneous, that is, when the intensity measure of $\eta$ has the form $t\times \lambda$, where $\lambda$ is the Lebesgue measure.

\begin{prop}\label{suffCondLG}
Assume that $\eta$ is a homogeneous Poisson point process on $\R^d$ with intensity $t\lambda$, $t>0$. Let $L$ be the length of $\mathfrak{H}$ and define the random variable $G$ as in (\ref{defG}). Then $G$ and $L$ are integrable, provided that
\begin{align}\label{finiteIntCond}
\int_{\R^d}\rho(x)^{d+1} dx < \infty.
\end{align}
\end{prop}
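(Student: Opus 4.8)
The plan is to establish the two integrability statements separately. Integrability of $L$ is an immediate consequence of the Slivniak--Mecke formula together with the geometric estimate \eqref{rhoBound}, whereas the integrability of $G$ is the substantial part and will be obtained through a covering argument on dyadic shells combined with Poisson tail bounds.

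\textbf{Integrability of $L$.} The functional $L$ is the $U$-statistic of order $2$ with kernel $f_L(x,y)=\tfrac12\lVert x-y\rVert\ind\{\lVert x-y\rVert\le\rho(x)+\rho(y)\}$, so by the expectation formula for $U$-statistics recalled in Section \ref{AppUstat} (an instance of \eqref{e:smecke}),
\[
\E L=\frac{t^2}{2}\int_{\R^d}\left(\int_{\R^d}\lVert x-y\rVert\,\ind\{\lVert x-y\rVert\le\rho(x)+\rho(y)\}\,dy\right)dx\ \in[0,\infty].
\]
By the elementary estimate underlying \eqref{rhoBound}, $\lVert x-y\rVert\le\rho(x)+\rho(y)$ forces $\rho(y)\le 3^\gamma\rho(x)$, hence $\lVert x-y\rVert\le(1+3^\gamma)\rho(x)=c\,\rho(x)$; consequently the inner integral is at most $c\rho(x)$ times the volume of $B(x,c\rho(x))$, that is, at most $\kappa_d\,c^{d+1}\rho(x)^{d+1}$ with $\kappa_d$ the volume of the unit ball. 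Thus $\E L\le\frac{t^2}{2}\kappa_d c^{d+1}\int_{\R^d}\rho(x)^{d+1}\,dx<\infty$ by hypothesis \eqref{finiteIntCond}. (A lower bound of the same order, obtained by integrating over $\{y:\tfrac14\rho(x)\le\lVert x-y\rVert\le\tfrac12\rho(x)\}$, shows that \eqref{finiteIntCond} is also necessary for $\E L<\infty$.)

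\textbf{Integrability of $G$.} Split $\R^d=\bigcup_{j\ge0}S_j$ with $S_j:=\{x:2^{j}\le(\lVert x\rVert+1)^\gamma<2^{j+1}\}$, so that $2^{-(j+1)}<\rho(x)\le2^{-j}$ on $S_j$ and each $S_j$ is bounded. For each $j$ fix a $2^{-j}$-net $Z_j$ of $S_j$; a standard volume comparison gives $\lvert Z_j\rvert\le M_j$ with $M_j\lesssim 2^{jd(1+\gamma)/\gamma}$. Since $c\rho(x)\le c2^{-j}$ on $S_j$, every $x\in S_j$ admits $z\in Z_j$ with $B(x,c\rho(x))\subseteq B(z,(c+1)2^{-j})$, whence
\[
G=\sup_{j\ge0}\ \sup_{x\in S_j\cap\Q^d}\rho(x)\,\eta\bigl(B(x,c\rho(x))\bigr)\ \le\ \sum_{j\ge0}2^{-j}W_j,\qquad W_j:=\max_{z\in Z_j}\eta\bigl(B(z,(c+1)2^{-j})\bigr).
\]
Each $\eta(B(z,(c+1)2^{-j}))$ is Poisson with mean $\mu_j=t\kappa_d(c+1)^d2^{-jd}$, so the Chernoff bound $\P(\mathrm{Poisson}(\mu)\ge k)\le(e\mu/k)^{k}$ and a union bound give, for $k\ge1$,
\[
\P(W_j\ge k)\ \le\ M_j\,(e\mu_j/k)^{k}\ \lesssim\ (C/k)^{k}\,2^{\,jd\,[(1+\gamma)/\gamma-k]},
\]
with $C$ not depending on $j,k$. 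Picking an integer $k_0>(1+\gamma)/\gamma$ makes every term with $k\ge k_0$ at most $(C/k)^k$ and geometrically small in $j$; summing over $k$ then yields $\E W_j=\sum_{k\ge1}\P(W_j\ge k)\le k_0-1+\sum_{k\ge k_0}(C/k)^k=:\bar C<\infty$, a bound uniform in $j$. Therefore $\E G\le\bar C\sum_{j\ge0}2^{-j}=2\bar C<\infty$, which completes the proof.

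\textbf{Main obstacle.} The only genuinely delicate point is the uniform-in-$j$ bound on $\E W_j$ in the second step: the net $Z_j$ contains exponentially many (in $j$) points, so $W_j$ is a maximum over exponentially many Poisson counts, and this maximum stays bounded only because those counts have exponentially small mean $\mu_j\asymp2^{-jd}$ — so that $\log\lvert Z_j\rvert$ and $\log(1/\mu_j)$ are both of order $j$ and the ratio that governs the size of the maximum is $O(1)$. Cruder estimates (bounding $G$ by a constant times $\sum_{y\in\eta}\rho(y)$, or passing to a maximum over unit boxes) destroy this cancellation and would require a far stronger decay of $\rho$; in particular, the present argument uses hypothesis \eqref{finiteIntCond} only in the first step.
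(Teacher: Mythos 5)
Your proof is correct, but for the integrability of $G$ it takes a genuinely different route from the paper. For $L$ you argue exactly as the paper does (Slivnyak--Mecke together with the observation that $\lVert x-y\rVert\le\rho(x)+\rho(y)$ forces $\lVert x-y\rVert\le(3^\gamma+1)\rho(x)$), and your parenthetical annulus bound correctly shows that \eqref{finiteIntCond} is in fact equivalent to $\E L<\infty$. For $G$, the paper stays within the Mecke framework: replacing the rational centre $x$ by a Poisson point $\hat x\in B(x,c\rho(x))$ and using the comparison $\rho(x)\le(3^\gamma+2)^\gamma\rho(\hat x)$ (the same mechanism as \eqref{rhoBound}), it dominates $G$ by a constant times the order-two sum $\sum_{(x,y)\in\eta_{\neq}^2}\rho(x)\ind\{\lVert x-y\rVert\le 2cc'\rho(x)\}$, whose expectation equals $t^2(2cc')^d\kappa_d\int_{\R^d}\rho(x)^{d+1}dx$ by \eqref{e:smecke}; thus in the paper both $\E G$ and $\E L$ are controlled by the single hypothesis \eqref{finiteIntCond}. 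You instead discretize into dyadic shells with $2^{-j}$-nets and control the maximum of exponentially many Poisson counts with exponentially small means via Chernoff and union bounds; apart from the minor point that the implied constant in $M_j\lesssim 2^{jd(1+\gamma)/\gamma}$ must be absorbed into $\bar C$ (harmless), the estimate $\E W_j\le\bar C$ uniformly in $j$ and hence $\E G\le 2\bar C$ is sound. The trade-off: the paper's argument is considerably shorter and uses nothing beyond the Mecke formula, whereas your argument is heavier but yields a strictly stronger conclusion, namely that $\E G<\infty$ holds unconditionally for the homogeneous process with the radius function \eqref{e:rho}, so that \eqref{finiteIntCond} enters only through $L$ -- a refinement the paper's proof does not give, though it relies more delicately on homogeneity and the specific polynomial decay of $\rho$.
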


\begin{proof}
First observe, quite similarly as it was done in (\ref{rhoBound}), that for any $x\in\Q^d$ and $\hat{x} \in B(x,(3^\gamma + 1) \rho(x))$,
\begin{align*}
\rho(x) \leq (3^\gamma + 2)^\gamma\rho(\hat{x}).
\end{align*}
Hence, writing $c = 3^\gamma + 1$ and $c' = (3^\gamma + 2)^\gamma$, we have
\begin{align*}
\sum_{y\in\eta} g_x(y) &= \rho(x) \sum_{y\in\eta} \ind\{ \lVert x -y \rVert \leq c\rho(x)\}\\
&\leq c'\rho(\hat{x}) \sum_{y\in\eta} \ind\{ \lVert \hat{x}-y \rVert \leq 2cc'\rho(\hat{x})\}.
\end{align*}
It follows that $\E G<\infty$ if the expectation of the following is finite:
\begin{align} \label{lengthObs2}
\nonumber &\sup_{x\in \eta} \sum_{y\in\eta\setminus x} \rho(x) \ind\{\lVert x-y\rVert \leq 2cc' \rho(x)\}\\
&\leq \sum_{(x,y)\in\eta_{\neq}^2} \rho(x)\ind\{\lVert x-y\rVert \leq 2cc' \rho(x)\}.
\end{align}
Using the Slivniak-Mecke formula \eqref{e:smecke}, we obtain that the expectation of the latter expression equals
\begin{align*}
t^2 \int_{\R^d} \int_{\R^d} \rho(x)\ind\{\lVert x-y\rVert \leq 2cc' \rho(x)\}\ dx\ dy 
= t^2 (2cc')^d \kappa_d \int_{\R^d}\rho(x)^{d+1} dx,
\end{align*}
where $\kappa_d$ denotes the Lebesgue measure of the unit ball in $\R^d$. So we have $\E G<\infty$ provided that (\ref{finiteIntCond}) holds. This condition also guarantees $\E L<\infty$ (to see this, just perform a computation similar to the one above where the estimate (\ref{lengthObs}) is used instead of (\ref{lengthObs2})).
\end{proof}

We initially claimed that $\mathfrak{H}$ can have a.s. infinitely many edges but still $\E L < \infty$. The following result, when {combined} with Proposition \ref{suffCondLG}, substantiates this statement.

\begin{prop} \label{infiniteEdgesProp}
Assume that $\eta$ is a homogeneous Poisson point process in $\R^d$ with intensity $t>0$. Denote by $N$ the number of edges in $\mathfrak{H}$. Then almost surely $N = \infty$, provided that
\begin{align}\label{infEdgesCond}
\int_{\R^d} \rho(x)^d dx = \infty.
\end{align}
\end{prop}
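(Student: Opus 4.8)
The strategy is to show that the expected number of edges is infinite, and then to upgrade this to almost sure infiniteness by exploiting independence properties of the Poisson process. First I would use the Slivniak--Mecke formula \eqref{e:smecke} to compute
\begin{align*}
\E N = \frac{t^2}{2} \int_{\R^d}\int_{\R^d} \ind\{\lVert x-y\rVert \leq \rho(x)+\rho(y)\} \, dx\, dy.
\end{align*}
Since $\lVert x-y\rVert \leq \rho(x)+\rho(y)$ holds in particular whenever $\lVert x-y\rVert \leq \rho(x)$, the inner integral is bounded below by $\kappa_d \rho(x)^d$, so that $\E N \geq \tfrac{t^2}{2}\kappa_d \int_{\R^d}\rho(x)^d\, dx = \infty$ under assumption \eqref{infEdgesCond}. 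This alone does not give $N=\infty$ almost surely, however, so a further argument is needed.

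To pass from $\E N = \infty$ to $N=\infty$ a.s., the natural idea is to write $N$ as a (super-)sum of independent contributions indexed by a partition of $\R^d$. I would tile $\R^d$ by a countable family of disjoint cubes $\{Q_k\}_{k\in\N}$ and, for each $k$, count only edges $\{x,y\}$ with both endpoints in $Q_k$; call this $N_k$. Because $\eta$ restricted to disjoint sets is independent, the variables $N_k$ are independent, and $N \geq \sum_k N_k$. By the second Borel--Cantelli lemma (or the Kolmogorov three-series/zero-one type argument), it suffices to show that $\sum_k \P(N_k \geq 1) = \infty$, which forces $N_k\geq 1$ infinitely often and hence $N=\infty$ a.s. To estimate $\P(N_k\geq 1)$ one can use the Slivniak--Mecke formula again on each cube: if the cubes are chosen with side length comparable to $\sup_{x\in Q_k}\rho(x)$ (say centered at $x_k$ with side $\asymp \rho(x_k)$, exploiting that $\rho$ varies slowly in the sense of \eqref{rhoBound}), then two typical points of $\eta$ in $Q_k$ are within distance $\rho(x)+\rho(y)$ of each other, so $\E N_k \gtrsim t^2 \rho(x_k)^{2d}$ while $\E N_k^2$ is controlled by a similar power, giving via Paley--Zygmund a lower bound $\P(N_k\geq 1)\gtrsim \min(1, t^2\rho(x_k)^{2d})/\text{const}$. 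Summing over $k$, this series behaves like $\sum_k \rho(x_k)^{2d}$; one then checks that $\int \rho(x)^d\,dx = \infty$ together with a judicious choice of the cube sizes makes $\sum_k \rho(x_k)^{2d}$ diverge — here it is convenient to group the cubes into dyadic annuli where $\rho$ is of constant order and count how many cubes of the relevant size fit in each annulus.

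The main obstacle I anticipate is the bookkeeping in the last step: choosing the partition so that simultaneously (i) $\rho$ is essentially constant on each cube, (ii) the cubes are large enough that $\P(N_k\geq 1)$ is not too small, and (iii) the resulting series still diverges given only $\int\rho^d = \infty$ (as opposed to the stronger $\int\rho^{2d}=\infty$ one would naively want). The resolution is that one does not need each individual cube to contribute a constant probability; rather, one can take cubes of side $\asymp 1$ (since $\rho$ is bounded and, on the region where $\lVert x\rVert$ is large, $\rho(x)$ is small but the slow-variation estimate \eqref{rhoBound} still applies on unit cubes once $\lVert x\rVert \geq 1$), obtaining $\P(N_k\geq 1)\gtrsim \rho(x_k)^{2d}$ is too weak; instead one should let the edge be formed between a point in $Q_k$ and a point in a \emph{neighbouring} cube, so that the relevant probability is $\gtrsim \rho(x_k)^d$ rather than $\rho(x_k)^{2d}$, at the cost of making the events for non-adjacent cubes independent rather than all cubes. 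Concretely, partition $\N$ into the even- and odd-indexed cubes along each coordinate so that cubes two apart are independent, and for each such independent family run Borel--Cantelli with $\P(\text{an edge from } Q_k \text{ to an adjacent cube})\gtrsim t^2\rho(x_k)^{2d} \cdot (\text{volume factor})$; summing over $k$ this is comparable to $\sum_k \rho(x_k)^d$, which diverges exactly under \eqref{infEdgesCond}. Once a divergent series of probabilities of independent events is produced, Borel--Cantelli gives $N=\infty$ almost surely.
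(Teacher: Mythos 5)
Your overall route---lower-bounding $N$ by pair counts in disjoint small regions, using independence of the Poisson counts on disjoint sets, and invoking the second Borel--Cantelli lemma to turn a divergent series of probabilities into $N=\infty$ a.s.---is exactly the paper's strategy, and your \emph{first} formulation is essentially the paper's proof: the paper packs $\lfloor\sqrt d/\rho(x+\mathbf 1)\rfloor^{d}$ disjoint cells of diameter $\rho(x+\mathbf 1)$ into each unit cube $x+[0,1]^d$, $x\in\N^d$; any two points of $\eta$ in one cell are joined, each cell contains at least two points with probability $\asymp t^2\rho^{2d}$, and the Borel--Cantelli series is $\asymp\sum_x \rho(x+\mathbf 1)^d$, which diverges under \eqref{infEdgesCond}. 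The worry that drives your last paragraph---that with $\rho$-sized cubes the series $\sum_k\rho(x_k)^{2d}$ would need the stronger condition $\int\rho^{2d}=\infty$---is a miscount: cubes of side $\asymp\rho$ occur with multiplicity $\asymp\rho(x)^{-d}$ per unit volume, so $\sum_k\rho(x_k)^{2d}\asymp\int\rho^{2d}\,\rho^{-d}\,dx=\int\rho^d\,dx$, i.e.\ it diverges precisely under \eqref{infEdgesCond}; no extra device is needed, and (by the slow-variation bound \eqref{rhoBound}) each such cell is automatically a clique, so Paley--Zygmund can even be replaced by the elementary bound $\P(\mathrm{Poisson}(\lambda)\geq 2)\gtrsim\lambda^2$ for small $\lambda$.

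The ``resolution'' you substitute for this is where the genuine gap lies. For unit cubes the probability of an intra-cube edge is not $\asymp\rho^{2d}$ but $\asymp t^2\rho^{d}$ (by Mecke, the expected number of intra-cube pairs at distance $\leq 2\rho$ is $\asymp t^2\kappa_d(2\rho)^d$), so the premise of the fix is wrong; and the fix itself---counting only edges between a point of $Q_k$ and a point of an \emph{adjacent} unit cube---makes the count worse, not better: such an edge forces one endpoint within distance $\approx 2\rho$ of the shared face and the other within distance $\approx 2\rho$ of it, so the expected number of face-crossing edges is $\lesssim t^2\rho^{d+1}$. Since for $\rho(x)=(\lVert x\rVert+1)^{-\gamma}$ as in \eqref{e:rho} one has $\int\rho^{d+1}dx<\infty$ while $\int\rho^{d}dx=\infty$ whenever $d/(d+1)<\gamma\leq 1$, the series you would feed into Borel--Cantelli can converge in exactly the regime the proposition addresses, and the claimed bound ``$\gtrsim t^2\rho^{2d}\cdot(\text{volume factor})$, comparable to $\sum_k\rho^d$'' is never substantiated; the even/odd decoupling it necessitates is likewise unnecessary. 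Keeping your original construction (many $\rho$-sized cells, fully independent counts) closes the gap and recovers the paper's argument verbatim.
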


\begin{rem}
The phenomenon described above is remarkable since it allows for situations where the U-statistic
\begin{align*}
L = \sum_{(x,y)\in\eta_{\neq}^2} f_L(x,y)
\end{align*}
is indeed (as opposed to the edge counting statistics) almost surely an infinite series, i.e. we have almost surely $f_L(x,y)>0$ for infinitely many $(x,y) \in\eta_{\neq}^2$. Intuitively, one might expect that in this situation strong concentration properties for $L$ are more difficult to establish. However, as we have seen above, our method works without problems and yields exponential tail bounds for the graph length regardless of whether finitely or infinitely many edges are present. 
\end{rem}

\begin{proof}[Proof of Proposition \ref{infiniteEdgesProp}]
Note that
\begin{align*}
N = \tfrac 12 \sum_{(x,y)\in\eta^2_{\neq}} \ind\{\lVert x-y \rVert \leq \rho(x) + \rho(y)\}.
\end{align*}
For all $x\in\N^d$ define the cube
\begin{align*}
Q_x = [0,\tfrac{1}{\sqrt{d}}\rho(x + \textbf{1})]^d \subset\R^d,
\end{align*}
where $\textbf{1} = (1,\ldots,1)\in\N^d$. Observe that for any $x\in\N^d$, we can place
\begin{align*}
\left\lfloor \frac{\sqrt{d}}{\rho(x + \textbf{1})}\right\rfloor^d =: r(x)
\end{align*}
many disjoint translated copies of $Q_x$ into the cube $x+[0,1]^d$. Denote these copies by $Q_x^1,\ldots Q_x^{r(x)}$. Observe also that for the diameter of each $Q_x^i$ {one has $\diam(Q_x^i) = \rho(x + \textbf{1})$.} Hence, any two distinct vertices $x,y\in \eta$ within the same cube are connected by an edge. Therefore,
\begin{align*}
N\geq \sum_{x\in\N^d} \sum_{i=1}^{r(x)}\binom{\eta(Q_x^i)}{2}.
\end{align*}
Now, the $\eta(Q_x^i)$ are independent Poisson random variables. Thus, by the second Borel-Cantelli lemma, the right hand side in the above display is almost surely non-finite if
\begin{align*}
\sum_{x\in\N^d} \sum_{i=1}^{r(x)} \P(\eta(Q_x^i)\geq 2) = \infty.
\end{align*}
The expectation of $\eta(Q_x^i)$ is given by
\begin{align*}
\lambda_x = \frac{t}{d^{d/2}} \rho(x + \textbf{1})^d.
\end{align*}
Using this we obtain
\begin{align*}
\sum_{x\in\N^d}\sum_{i=1}^{r(x)} \P(\eta(Q_x^i)\geq 2) &=  \sum_{x\in\N^d} r(x)(1 - e^{-\lambda_x} - \lambda_x e^{-\lambda_x}).
\end{align*}
Since $1-e^{-z}-ze^{-z} \geq (1/4) z^2$ for $z\in[0,1]$ and since $\lambda_x\to 0 $ as $\lVert x\rVert\to\infty$, the above series is non-finite if
\begin{align*}
\sum_{x\in\N^d} r(x) \lambda_x^2 = \infty \ \ \ \text{or equivalently} \ \ \ \sum_{x\in\N^d} \rho(x + \textbf{1})^d = \infty.
\end{align*}
It is easy to see that the above is implied by condition (\ref{infEdgesCond}).
\end{proof}

\section{Concentration for the convex distance in Poisson-based models}\label{ss:convex}

The convex distance for product spaces that was introduced by M. Talagrand in \cite{T_1995} has proved to be a very useful tool in the context of concentration inequalities -- see e.g. \cite[Chapter 11]{DP}, \cite[Chapter 6]{steele}, \cite[Chapter 2]{Tao} and the references therein. In the recent paper \cite{R_2013} by M. Reitzner, this notion has been adapted for models based on Poisson point processes with finite intensity measure. For both the product space and the Poisson space version, the method of using the convex distance to establish concentration properties is based on an isoperimetric inequality. {First applications of this method for Poisson-based models are worked out in \cite{RST_2013, LR_2015} where concentration inequalities for Poisson U-statistics are presented.}

The proof of the convex distance isoperimetric inequality in \cite{R_2013} uses an approximation of the Poisson process by binomial processes. The goal in this section is to give an alternative proof for this inequality. Apart from slightly worse constants, we entirely recover Reitzner's result \cite[Theorem 1.1]{R_2013} with the tools developed in the present work. In particular, we only use methods from Poisson process theory, thus answering the question proposed in \cite{R_2013} of whether such a direct proof is possible. Moreover, the assumptions on the space $\X$ for our results are less restrictive than in \cite{R_2013} where only locally compact second countable Hausdorff spaces are considered.

The upcoming presentation is based on \cite{BLM_2009} and \cite{BLM_2003} where the convex distance for product spaces is recovered using the entropy method.

\subsection{Convex distance for Poisson processes}
To introduce the convex distance for Poisson point processes, let $\OX_{\rm fin} \subset \OX$ denote the space of finite integer-valued measures on $\X$ which is equipped with the $\sigma$-algebra $\mathcal{N}_{\rm fin}$ obtained by restricting $\mathcal{N}$ to $\OX_{\rm fin}$. We will write $\xi(x)=\xi(\{x\})$ whenever $\xi\in\OX_{\rm fin}$ and $x\in\X$ in order to simplify notations. For any two measures $\xi,\nu\in\OX_{\rm fin}$, we define the measure $\xi\setminus\nu$ by
\begin{align*}
\xi\setminus\nu = \sum_{x\in\xi}(\xi(x) - \nu(x))_+ \delta_x,
\end{align*}
where $x\in\xi$ indicates that $x$ belongs to the support of $\xi$. The \emph{convex distance} $d_T(\xi, A)$ is now defined for any measurable set $A\in\mathcal{N}_{\rm fin}$ and $\xi\in\OX_{\rm fin}$ by
\[
d_T(\xi,A) = \sup_{\lVert u \rVert_\xi \leq 1}\inf_{\nu\in A}\int_{\X} u\ d(\xi\setminus \nu),
\]
where the supremum ranges over all measurable maps $u:\X \to \R$ such that $\lVert u \rVert_\xi\leq 1$ and $\lVert \cdot \rVert_\xi$ denotes the $2$-norm with respect to the measure $\xi$. It is immediate from the above definition that
\begin{align}\label{e:tuba}
d_T(\xi,A) = \sup_{\lVert u \rVert_\xi \leq 1}\inf_{\nu\in A}\sum_{x\in\xi} u(x)(\xi(x)-\nu(x))_+.
\end{align}
The following result gives an alternative characterization for the convex distance which will be crucial for our proof of the isoperimetric inequality later on.

\begin{prop} \label{ConvDistProp}
Let $A\in\mathcal{N}_{\rm fin}$ and denote by $\mathcal{M}(A)$ the set of probability measures on $A$. Then, for any $\xi\in \OX_{\rm fin}$ we have
\begin{align*}
d_T(\xi,A) &= \max_{\lVert u \rVert_\xi \leq 1}\min_{\zeta \in \mathcal{M}(A)}\sum_{x\in\xi} u(x) \E_{\zeta(\nu)}[(\xi(x) - \nu(x))_+]\\
&= \min_{\zeta \in \mathcal{M}(A)}\max_{\lVert u \rVert_\xi \leq 1}\sum_{x\in\xi} u(x) \E_{\zeta(\nu)}[(\xi(x) - \nu(x))_+],
\end{align*}
where, here and for the rest of the section, we use the shorthand notation
$$
\E_{\zeta(\nu)}[ h(\nu) ] = \int_A h(\nu ) d\zeta(\nu),
$$
for every positive measurable mapping $h : A\to \R_+$.
\end{prop}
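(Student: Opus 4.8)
The plan is to recognize the right-hand sides as the two sides of a minimax theorem applied to the bilinear function
\[
L(u,\zeta) = \sum_{x\in\xi} u(x)\,\E_{\zeta(\nu)}[(\xi(x)-\nu(x))_+],
\]
and then to verify that the max-min value equals $d_T(\xi,A)$ as defined in \eqref{e:tuba}. First I would fix $\xi\in\OX_{\rm fin}$; since $\xi$ is finite, its support is a finite set, say $\{x_1,\dots,x_m\}$, so the constraint $\lVert u\rVert_\xi\leq 1$ confines the relevant values $(u(x_1),\dots,u(x_m))$ to a compact convex subset (a weighted Euclidean ball) of $\R^m$, and only these finitely many values of $u$ enter $L$. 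On the other side, $\mathcal{M}(A)$ is a convex set of probability measures, and $\zeta\mapsto L(u,\zeta)$ is affine (indeed linear in $\zeta$) for each fixed $u$, while $u\mapsto L(u,\zeta)$ is linear for each fixed $\zeta$. Thus $L$ is bilinear, $u$ ranges over a compact convex set, $\zeta$ over a convex set, so Sion's minimax theorem (or von Neumann's, after noting the map factors through the finitely many numbers $\E_{\zeta(\nu)}[(\xi(x_i)-\nu(x_i))_+]$, $i=1,\dots,m$, which live in a compact convex subset of $\R^m$) gives
\[
\max_{\lVert u\rVert_\xi\leq 1}\min_{\zeta\in\mathcal{M}(A)} L(u,\zeta) = \min_{\zeta\in\mathcal{M}(A)}\max_{\lVert u\rVert_\xi\leq 1} L(u,\zeta),
\]
with both extrema attained (the max in $u$ by compactness; the min in $\zeta$ because the image of $\mathcal{M}(A)$ under $\zeta\mapsto(\E_{\zeta(\nu)}[(\xi(x_i)-\nu(x_i))_+])_{i\le m}$ is a compact convex subset of $\R^m$, being the convex hull of the image of the extreme points $\delta_\nu$, $\nu\in A$, suitably closed). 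This yields the equality of the two displayed right-hand sides.

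It then remains to identify this common value with $d_T(\xi,A)$. Starting from the max-min form, for fixed $u$ with $\lVert u\rVert_\xi\leq 1$ we have
\[
\min_{\zeta\in\mathcal{M}(A)}\sum_{x\in\xi}u(x)\,\E_{\zeta(\nu)}[(\xi(x)-\nu(x))_+] = \inf_{\nu\in A}\sum_{x\in\xi}u(x)(\xi(x)-\nu(x))_+,
\]
because a linear functional over the probability simplex $\mathcal{M}(A)$ attains its infimum at a Dirac mass $\delta_\nu$, $\nu\in A$ (and conversely every $\delta_\nu$ is available); one has to be slightly careful when $A$ is infinite, but the infimum over $\mathcal{M}(A)$ is still equal to the infimum over $\{\delta_\nu : \nu\in A\}$ by the same extreme-point argument, since for each $\zeta$ and each $\varepsilon>0$ there is $\nu$ in the support of $\zeta$ with $\sum_x u(x)(\xi(x)-\nu(x))_+ \le L(u,\zeta)+\varepsilon$. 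Taking the supremum over $u$ then reproduces exactly the expression \eqref{e:tuba} for $d_T(\xi,A)$, so the proof is complete.

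The main obstacle I anticipate is purely a matter of checking compactness/closedness hypotheses carefully enough to invoke a minimax theorem when $A$ (hence $\mathcal{M}(A)$) need not be compact in any a priori topology. The clean way around this is to observe that everything factors through the finite-dimensional vector $\big(\E_{\zeta(\nu)}[(\xi(x_i)-\nu(x_i))_+]\big)_{i=1}^m\in\R^m$ (using finiteness of $\xi$), whose range as $\zeta$ varies over $\mathcal{M}(A)$ is a bounded convex subset of $\R^m$ — bounded because $0\le(\xi(x_i)-\nu(x_i))_+\le\xi(x_i)$ — and its closure $C$ is then compact convex. One works with $C$ in place of $\mathcal{M}(A)$ for the minimax step, and then checks that the minima over $\mathcal{M}(A)$ and over $C$ agree (the extreme points of $C$ are limits of images of Diracs, and the linear functional $u$ still achieves its min over $C$ at such a point, matching $\inf_{\nu\in A}$). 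This finite-dimensional reduction is exactly the device used for the product-space convex distance in \cite{BLM_2009, BLM_2003}, and it transfers verbatim.
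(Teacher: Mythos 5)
Your proposal is correct and follows essentially the same route as the paper: the paper likewise reduces to a finite-dimensional bilinear minimax problem (via the pushforward of $\zeta$ under $\nu\mapsto(x\mapsto(\xi(x)-\nu(x))_+)$, which lands in the probability measures on a finite set), applies Sion's minimax theorem, and identifies the common value with $d_T(\xi,A)$ through exactly the Dirac-measure argument you describe. The only refinement worth noting is that no closure step is needed: since each $(\xi(x_i)-\nu(x_i))_+$ is an integer in $\{0,\dots,\xi(x_i)\}$, the image of $\mathcal{M}(A)$ under your barycenter map is the convex hull of a finite set and hence already compact, which also yields the attainment of the minimum over $\mathcal{M}(A)$ itself without any limiting argument.
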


\begin{proof}
Here we adapt arguments from the proof of \cite[Proposition 13]{BLM_2003}. We begin by proving that
\begin{align}\label{convDistEquation}
d_T(\xi,A) &= \sup_{\lVert u \rVert_\xi \leq 1}\inf_{\zeta \in \mathcal{M}(A)}\sum_{x\in\xi} u(x) \E_{\zeta(\nu)}[(\xi(x) - \nu(x))_+].
\end{align}
For any $\nu\in A$ consider the probability measure $\zeta_\nu\in \mathcal{M}(A)$ that is concentrated on $\nu$. Then
\[
\sum_{x\in\xi} u(x) \E_{\zeta_\nu(\nu')} [(\xi(x) - \nu'(x))_+] = \sum_{x\in\xi} u(x)(\xi(x) - \nu(x))_+.
\]
Hence, for any $u$ with $\lVert u \rVert_\xi \leq 1$, we have
\[
\inf_{\zeta \in \mathcal{M}(A)}\sum_{x\in\xi} u(x) \E_{\zeta(\nu)} [(\xi(x) - \nu(x))_+] \leq \inf_{\nu\in A}\sum_{x\in\xi} u(x)(\xi(x) - \nu(x))_+.
\]
On the other hand, for all $\zeta\in\mathcal{M}(A)$ we have
\begin{align*}
\inf_{\nu\in A} \sum_{x\in\xi} u(x)(\xi(x) - \nu(x))_+ &\leq \E_{\zeta(\nu)}\sum_{x\in\xi} u(x)(\xi(x) - \nu(x))_+\\
&= \sum_{x\in\xi} u(x) \E_{\zeta(\nu)}[(\xi(x) - \nu(x))_+].
\end{align*}
Thus,
\[
\inf_{\nu\in A} \sum_{x\in\xi} u(x)(\xi(x) - \nu(x))_+ \leq \inf_{\zeta\in\mathcal{M}(A)}\sum_{x\in\xi} u(x) \E_{\zeta(\nu)}[(\xi(x) - \nu(x))_+].
\]
This establishes equation (\ref{convDistEquation}).

We aim at applying Sion's minimax theorem \cite[Corollary 3.3]{S_1958}. To get prepared for this, first note that the supremum in \eqref{convDistEquation} can obviously by performed with respect to those functions $u:\X\to\R$ satisfying $u(x) = 0$ whenever $x\notin\xi$.  Note also that these functions form a finite dimensional real vector space (whose dimension is given by $\#\{x\in\xi\}$) which will be denoted by $U$. So the supremum is actually taken over
\begin{align*}
U_{\leq 1} = \{u \in U : \lVert u \rVert_\xi \leq 1\}
\end{align*}
which is a convex and compact subset of $U$. Denote by $Q$ the finite set of maps $q:\X\to\N_0$ satisfying $q(x)\leq \xi(x)$ for all $x\in\xi$ and $q(x)=0$ whenever $x\notin\xi$. Moreover, define the map $I$ by
\begin{align*}
I:A \to Q, \nu \mapsto (x\mapsto (\xi(x) - \nu(x))_+).
\end{align*}
Then, for any $\zeta\in\mathcal{M}(A)$ and $x\in\xi$ we have
\[
\E_{\zeta(\nu)} [(\xi(x) - \nu(x))_+] = \E_{I\zeta(q)}[q(x)],
\]
where $I\zeta$ denotes the pushforward measure of $\zeta$ with respect to $I$. Now, instead of taking the infimum in $\mathcal{M}(A)$ we can also minimize in the set of pushforward measures $I\mathcal{M}(A)$. The set $I\mathcal{M}(A)$ coincides with the set of probability measures on $I(A)$, denoted by $\mathcal{M}I(A)$. Observe that $\mathcal{M} I(A)$ is a convex and compact subset in the finite dimensional real vector space of all signed measures on $I(A)$ which we denote by $\mathcal{S}I(A)$. Obviously, the map
\[
U\times \mathcal{S}I(A)\to\R, \ (u,\zeta)\mapsto \sum_{x\in\xi} u(x) \E_{\zeta(q)} [q(x)],
\]
is both linear in $u$ and $\zeta$. Hence, it is also upper semicontinuous and quasi-concave in $u$ and lower semicontinuous and quasi-convex in $\zeta$. According to the above considerations, the assumptions of Sion's theorem are satisfied and we obtain
\begin{align*}
&\sup_{\lVert u \rVert_\xi \leq 1}\inf_{\zeta \in \mathcal{M}(A)}\sum_{x\in\xi} u(x) \E_{\zeta(\nu)} [(\xi(x) - \nu(x))_+] = \sup_{u\in U_{\leq 1}}\inf_{\zeta \in \mathcal{M}I(A)} \sum_{x\in\xi} u(x) \E_{\zeta(q)} [q(x)]\\ &= \inf_{\zeta \in \mathcal{M}I(A)} \sup_{u\in U_{\leq 1}} \sum_{x\in\xi} u(x) \E_{\zeta(q)} [q(x)] = \inf_{\zeta \in \mathcal{M}(A)} \sup_{\lVert u \rVert_\xi \leq 1} \sum_{x\in\xi} u(x) \E_{\zeta(\nu)} [(\xi(x) - \nu(x))_+].
\end{align*}
Since both $U_{\leq 1}$ and $\mathcal{M} I(A)$ are compact, the suprema and infima are actually maxima and minima.
\end{proof}

\subsection{Convex distance inequality}
In what follows, we will give the announced new proof of the convex distance inequality for Poisson point processes. The result we aim to prove is the following:
{
\begin{thm}\label{convDistIneq}
Let $\eta$ be a Poisson point process in $\X$ with finite intensity measure $\mu$. Let $A\in\mathcal{N}_{\rm fin}$ be arbitrary. Then
\begin{align*}
\P(\eta\in A) \E( e^{d_T(\eta, A)^2/ 10}) \leq 1.
\end{align*}
In particular, for any $r\geq 0$,
\begin{align}\label{e:r/9}
\P(\eta\in A)\P(d_T(\eta,A)\geq r) \leq e^{-r^2/10}.
\end{align}
\end{thm}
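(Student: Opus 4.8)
\emph{The plan} is to apply the lower‐deviation bound of Theorem~\ref{vPlusLowerThm} and the upper‐deviation bound of Theorem~\ref{arbSignThm} to the single Poisson functional $f:=d_T(\eta,A)^2$, once I have shown that $f$ is non‑decreasing and satisfies a self‑bounding type estimate $V^+(f)\le 4f$. Since $\mu$ is finite we have $\eta\in\OX_{\rm fin}$ almost surely, and I take as representative of $f$ the map equal to $d_T(\cdot,A)^2$ on $\OX_{\rm fin}$ and equal to $0$ off $\OX_{\rm fin}$; this is harmless, since $\OX_{\rm fin}$ and its complement are both stable under $\xi\mapsto\xi\pm\delta_x$, so the add‑one cost operator of this extension agrees with that of $d_T(\cdot,A)^2$ on $\OX_{\rm fin}$ and vanishes elsewhere. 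We may of course assume $\P(\eta\in A)>0$, the claimed inequality being trivial otherwise.

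\emph{Step 1 (the geometry).} By the minimax identity of Proposition~\ref{ConvDistProp}, together with the Cauchy--Schwarz inequality applied to the inner supremum over $\lVert u\rVert_\xi\le 1$, one may write, for every $\xi\in\OX_{\rm fin}$,
\[
d_T(\xi,A)^2=\min_{\zeta\in\mathcal{M}(A)}\ \sum_{x\in\xi}\frac{\bigl(\E_{\zeta(\nu)}[(\xi(x)-\nu(x))_+]\bigr)^2}{\xi(x)},
\]
and for every such $\xi$ with $d_T(\xi,A)>0$ one fixes a non‑negative maximiser $u_\xi$, supported on the support of $\xi$ with $\lVert u_\xi\rVert_\xi\le1$, realising $d_T(\xi,A)=\min_{\zeta}\sum_{x\in\xi}u_\xi(x)\,\E_{\zeta(\nu)}[(\xi(x)-\nu(x))_+]$. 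Feeding $u_\xi$ restricted to the support of $\xi-\delta_x$ (an admissible test function there, since $\lVert u_\xi\rVert_{\xi-\delta_x}^2=\lVert u_\xi\rVert_\xi^2-u_\xi(x)^2$) into the variational problem defining $d_T(\xi-\delta_x,A)$, and using the elementary inequality $0\le(t)_+-(t-1)_+\le1$, one obtains $d_T(\xi,A)-d_T(\xi-\delta_x,A)\le u_\xi(x)$; a symmetric substitution (using an optimal test function for $\xi-\delta_x$, extended by zero) gives $d_T(\xi-\delta_x,A)\le d_T(\xi,A)$, and likewise $0\le d_T(\xi+\delta_x,A)-d_T(\xi,A)\le u_{\xi+\delta_x}(x)\le1$. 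In particular $f=d_T(\cdot,A)^2$ is non‑decreasing and $0\le D_xf\le2$ for all $(x,\xi)$. Since $f$ is non‑decreasing, $V^+(f)$ reduces to its $\eta$‑integral, and, estimating $f(\eta)-f(\eta-\delta_x)\le\bigl(d_T(\eta,A)-d_T(\eta-\delta_x,A)\bigr)\cdot 2\,d_T(\eta,A)\le 2\,d_T(\eta,A)\,u_\eta(x)$,
\[
V^+(f)=\int_\X\bigl(f(\eta)-f(\eta-\delta_x)\bigr)^2\,d\eta(x)\le 4\,d_T(\eta,A)^2\int_\X u_\eta(x)^2\,d\eta(x)=4\,d_T(\eta,A)^2\,\lVert u_\eta\rVert_\eta^2\le 4f
\]
almost surely.

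\emph{Steps 2--3 (the soft part).} Put $g:=f/2$, so that $0\le D_xg\le1$ everywhere and $V^+(g)=V^+(f)/4\le f=2g$ almost surely. Theorem~\ref{vPlusLowerThm} (with $a=2$) shows that $g$, hence $f$, is integrable and that $\P(g\le\E g-r)\le\exp(-r^2/(4\E g))$ for all $r\ge0$. Since $d_T(\xi,A)=0$ whenever $\xi\in A$ (take $\nu=\xi$ in \eqref{e:tuba}), we have $\{g=0\}\supseteq A$; taking $r=\E g$ therefore gives $\P(\eta\in A)\le\P(g=0)\le\exp(-\E g/4)$, i.e. $\E f=2\E g\le 8\log(1/\P(\eta\in A))$. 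On the other hand, Theorem~\ref{arbSignThm} applied to the integrable functional $f$ with $a=4$ and $b=0$ (legitimate by Step~1) yields, for every $\lambda\in(0,1/2)$,
\[
\log\E e^{\lambda f}\le\lambda\,\E f+\frac{4\lambda^2\,\E f}{2-4\lambda}=\frac{\lambda\,\E f}{1-2\lambda}.
\]
Choosing $\lambda=1/10$ and inserting the bound on $\E f$ gives $\log\E e^{d_T(\eta,A)^2/10}\le\tfrac18\,\E f\le\log(1/\P(\eta\in A))$, which is exactly $\P(\eta\in A)\,\E e^{d_T(\eta,A)^2/10}\le1$. Finally, \eqref{e:r/9} follows by applying Markov's inequality to $e^{d_T(\eta,A)^2/10}$.

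\emph{The crux} of the argument is Step~1: the variational formula of Proposition~\ref{ConvDistProp} has to be combined with the ``restrict the optimal test function'' device in order to control $D_x d_T(\cdot,A)$, with some care required when $\xi(\{x\})\ge2$ (the process $\eta$ is not assumed simple here) and in tracking the multiplicities $\eta(\{x\})$ hidden in the definition of $V^+$. Everything afterwards is automatic, and the constant $10$ — worse than the one obtained in \cite{R_2013} — is precisely what the combination ``$V^+(f)\le 4f\Rightarrow \E f\le 8\log(1/\P(\eta\in A))$ and $\log\E e^{\lambda f}\le\lambda\E f/(1-2\lambda)$'' produces at its optimal value $\lambda=1/10$. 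Note that this proof remains entirely within the theory of Poisson processes and uses no topological hypothesis on $\X$, in contrast with the binomial‑approximation argument of \cite{R_2013}.
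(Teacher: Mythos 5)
Your overall strategy coincides with the paper's: establish the two pointwise/almost-sure bounds $0\le D_x\bigl(d_T(\xi,A)^2\bigr)\le 2$ and $V^+\bigl(d_T(\eta,A)^2\bigr)\le 4\,d_T(\eta,A)^2$, then feed them into Theorem \ref{arbSignThm} (with $a=4$, $b=0$) and Theorem \ref{vPlusLowerThm} (applied to $\tfrac12 d_T(\eta,A)^2$), choose $\lambda=1/10$ and $r=\E d_T(\eta,A)^2$, and multiply the two resulting bounds; your Steps 2--3 and the self-bounding estimate $V^+(f)\le 4f$ are correct and match the paper's argument. The genuine gap is in Step 1, at the claim ``in particular $0\le D_xf\le 2$''. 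What you actually prove is $0\le d_T(\xi+\delta_x,A)-d_T(\xi,A)\le u_{\xi+\delta_x}(x)\le 1$, i.e. a bound on the increment of $d_T$, not of $d_T^2$. Since $d_T(\cdot,A)$ is unbounded, $D_xf=\bigl(d_T(\xi+\delta_x,A)+d_T(\xi,A)\bigr)\bigl(d_T(\xi+\delta_x,A)-d_T(\xi,A)\bigr)$ is only bounded by $d_T(\xi+\delta_x,A)+d_T(\xi,A)$ from what you wrote, which is not $\le 2$. This bound is not a cosmetic detail: it is exactly what allows the application of Theorem \ref{vPlusLowerThm} to $g=f/2$ (which requires $0\le D_xg\le1$ for \emph{all} $(x,\xi)$), and it is the most delicate point of the paper's proof, which handles it by comparing the two quadratic forms built from the \emph{same} optimal measure $\hat\zeta$ and bounding the single differing term,
\begin{align*}
D_z\bigl(d_T(\xi,A)^2\bigr)\le \frac{(\E_{\hat\zeta}G)^2-(\E_{\hat\zeta}G')^2}{\xi(z)+1}
\le\frac{\E_{\hat\zeta}[G-G']\;\E_{\hat\zeta}[G+G']}{\xi(z)+1}\le 2,
\end{align*}
with $G=(\xi(z)-\nu(z)+1)_+$, $G'=(\xi(z)-\nu(z))_+$.

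Your argument can be repaired within your own framework, but the missing observation must be stated: by Lemma \ref{cauchySchwarzApp} the maximiser can be taken as $u_{\xi'}=h_{\hat\zeta}/\lVert h_{\hat\zeta}\rVert_{\xi'}$ with $h_{\hat\zeta}(x)=\E_{\hat\zeta}[(\xi'(x)-\nu(x))_+]/\xi'(x)$ and $d_T(\xi',A)=\lVert h_{\hat\zeta}\rVert_{\xi'}$ (here $\xi'=\xi+\delta_x$), so that $d_T(\xi',A)\,u_{\xi'}(x)=h_{\hat\zeta}(x)\le 1$ and hence $D_xf\le 2\,d_T(\xi',A)\,u_{\xi'}(x)\le 2$; the mere inequality $u_{\xi'}(x)\le1$ that you invoke is strictly weaker and insufficient. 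A secondary point: your monotonicity argument (``extend the optimal test function for $\xi-\delta_x$ by zero'') does not go through when $\xi(\{x\})\ge2$, since then $\lVert v\rVert_\xi^2=\lVert v\rVert_{\xi-\delta_x}^2+v(x)^2$ may exceed $1$; you flag this but do not resolve it (monotonicity does hold, e.g. term by term in the representation $d_T(\xi,A)^2=\min_\zeta\sum_{x\in\xi}\xi(x)^{-1}\bigl(\E_{\zeta(\nu)}[(\xi(x)-\nu(x))_+]\bigr)^2$, though the paper itself only asserts it). Everything after Step 1, including the constant $1/10$ and the final Markov step, is correct and identical to the paper's computation.
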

}
Note that in \cite[Theorem 1.1]{R_2013} (under more restrictive assumptions on the space $(\X, \mathcal{X}, \mu)$), an inequality stronger than \eqref{e:r/9} is proved, where the {constant 1/10} is {replaced by 1/4}. To get prepared for the proof of Theorem \ref{convDistIneq}, we first establish the following result. This is interesting in its own right since it particularly states that the variance of the convex distance is bounded by $1$.

\begin{prop}\label{convDistProp}
Let $\eta$ be a Poisson point process on $\X$ with finite intensity measure $\mu$. Then for any $A\in\mathcal{N}_{\rm fin}$ almost surely
\begin{align*}
V^+(d_T(\eta, A)) = \int_\X(D_x d_T(\eta - \delta_x, A))^2 d\eta(x) \leq 1.
\end{align*}
In particular, $\V d_T(\eta, A)\leq 1$.
\end{prop}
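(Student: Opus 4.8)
The plan is to check the two assertions of the proposition in turn --- the identity $V^+(d_T(\eta,A))=\int_\X(D_xd_T(\eta-\delta_x,A))^2\,d\eta(x)$ and the bound $\le 1$ --- and then to deduce $\V d_T(\eta,A)\le 1$ from the second one via the Poisson Poincaré inequality. Throughout one may assume $A\neq\emptyset$ (otherwise $\P(\eta\in A)=0$ and the statement is irrelevant for the applications), so that $\mathcal M(A)\neq\emptyset$ and the minimax formula of Proposition \ref{ConvDistProp} is available.

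\emph{Step 1: monotonicity of $d_T(\cdot,A)$.} I would first show that $\xi\mapsto d_T(\xi,A)$ is non-decreasing on $\OX_{\rm fin}$. The cleanest route is via Proposition \ref{ConvDistProp}: computing the inner maximum over $\{u:\lVert u\rVert_\xi\le 1\}$ by Cauchy--Schwarz (after reducing to $u$ supported on $\xi$, with the substitution $w(y)=u(y)\sqrt{\xi(\{y\})}$) gives
\[
d_T(\xi,A)=\min_{\zeta\in\mathcal M(A)}\Big(\sum_{y\in\xi}\tfrac{1}{\xi(\{y\})}\,\E_{\zeta(\nu)}[(\xi(y)-\nu(y))_+]^2\Big)^{1/2}.
\]
For a fixed $\zeta$ the quantity under the root is non-decreasing in $\xi$: adding a new atom contributes a non-negative summand, while increasing the multiplicity at a point from $n$ to $n+1$ only increases the corresponding summand, because the elementary inequality $(n{+}1{-}m)_+\ge\tfrac{n+1}{n}(n{-}m)_+$ (valid for integers $n\ge1$, $m\ge0$) implies $\E_{\zeta(\nu)}[(n{+}1{-}\nu(x))_+]^2/(n{+}1)\ge \E_{\zeta(\nu)}[(n{-}\nu(x))_+]^2/n$. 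Taking the minimum over $\zeta$ preserves monotonicity. With the canonical representative, monotonicity says $D_xd_T(\eta,A)\ge 0$ for every $x$, hence $(D^{\le 0}_xd_T(\eta,A))^2\equiv 0$, so the $\mu$-integral in $V^+$ vanishes; moreover $D_xd_T(\eta-\delta_x,A)=d_T(\eta,A)-d_T(\eta-\delta_x,A)\ge 0$ (since $(\eta-\delta_x)+\delta_x=\eta$), so $D^{>0}_xd_T(\eta-\delta_x,A)=D_xd_T(\eta-\delta_x,A)$. This gives the claimed identity $V^+(d_T(\eta,A))=\int_\X(D_xd_T(\eta-\delta_x,A))^2\,d\eta(x)$.

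\emph{Step 2: the integral is $\le 1$.} Fix a realisation of $\eta$ (a.s.\ a finite measure, since $\mu(\X)<\infty$) and, using Proposition \ref{ConvDistProp}, pick a function $u^\ast=u^\ast(\eta)$ attaining the maximum in $d_T(\eta,A)=\max_{\lVert u\rVert_\eta\le 1}\min_{\zeta\in\mathcal M(A)}\sum_{y\in\eta}u(y)\,\E_{\zeta(\nu)}[(\eta(y)-\nu(y))_+]$, supported on $\eta$; in particular $d_T(\eta,A)=\min_{\zeta}\sum_y u^\ast(y)\E_{\zeta(\nu)}[(\eta(y)-\nu(y))_+]$. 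For each $x$ in the support of $\eta$, the restriction of $u^\ast$ is an admissible test function for $\eta-\delta_x$, since $\lVert u^\ast\rVert^2_{\eta-\delta_x}=\lVert u^\ast\rVert^2_\eta-u^\ast(x)^2\le 1$. Using the equalities $((\eta-\delta_x)(y)-\nu(y))_+=(\eta(y)-\nu(y))_+$ for $y\neq x$ together with the elementary bound $(n{-}1{-}m)_+\ge (n{-}m)_+-1$ for the term $y=x$, one gets for every $\zeta$
\[
\sum_{y}u^\ast(y)\,\E_{\zeta(\nu)}[((\eta-\delta_x)(y)-\nu(y))_+]\ \ge\ \sum_{y}u^\ast(y)\,\E_{\zeta(\nu)}[(\eta(y)-\nu(y))_+]-|u^\ast(x)|.
\]
Minimising over $\zeta$ and using the displayed characterisation of $d_T(\eta,A)$, this yields $d_T(\eta-\delta_x,A)\ge d_T(\eta,A)-|u^\ast(x)|$; combined with Step 1, $0\le D_xd_T(\eta-\delta_x,A)\le |u^\ast(x)|$ for every $x\in\eta$. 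Hence
\[
\int_\X(D_xd_T(\eta-\delta_x,A))^2\,d\eta(x)=\sum_x\eta(\{x\})\,(D_xd_T(\eta-\delta_x,A))^2\le\sum_x\eta(\{x\})\,u^\ast(x)^2=\lVert u^\ast\rVert^2_\eta\le 1.
\]
Since this holds for every realisation, $V^+(d_T(\eta,A))\le 1$ almost surely.

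\emph{Step 3: the variance bound.} As $\E V^+(d_T(\eta,A))\le 1<\infty$, the functional $d_T(\eta,A)$ is square-integrable (the truncation argument from the proof of Lemma \ref{ExpLem} applies verbatim), so the Poincaré inequality on the Poisson space gives $\V d_T(\eta,A)\le\E\int_\X(D_xd_T(\eta,A))^2\,d\mu(x)$; by the Mecke formula \eqref{e:mecke} the right-hand side equals $\E\int_\X(D_xd_T(\eta-\delta_x,A))^2\,d\eta(x)=\E V^+(d_T(\eta,A))\le 1$.

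The point requiring care is Step 2: one must pass from $\eta$ to $\eta-\delta_x$ while \emph{keeping the same test function} $u^\ast$, namely the one optimal for $\eta$. Using the test function optimal for $\eta-\delta_x$ instead would only produce $D_xd_T(\eta-\delta_x,A)\le 1$ separately for each $x$ --- which bounds each summand but not the sum --- and would miss the decisive normalisation $\sum_x\eta(\{x\})u^\ast(x)^2=\lVert u^\ast\rVert^2_\eta\le 1$. Everything else is bookkeeping with the $\lVert\cdot\rVert_\xi$ norms and the two elementary inequalities for $(\cdot)_+$ quoted above.
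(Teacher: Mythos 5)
Your proposal is correct and follows essentially the same route as the paper: use the minimax characterization of Proposition \ref{ConvDistProp} to fix a test function $u^\ast$ optimal for $\eta$ with $\lVert u^\ast\rVert_\eta\leq 1$, reuse it as an admissible test function for $\eta-\delta_x$ to get $0\leq d_T(\eta,A)-d_T(\eta-\delta_x,A)\leq |u^\ast(x)|$, sum against $\eta$, and finish with the Poincar\'e inequality (plus the Mecke formula). The only differences are cosmetic and in fact welcome: you bound the per-point difference via the elementary inequality $(n-1-m)_+\geq (n-m)_+-1$ and minimization over $\zeta$ rather than the paper's choice of the minimizing $\tilde\zeta$, and you prove the monotonicity of $d_T(\cdot,A)$ explicitly, a fact the paper uses implicitly both for the identity $V^+(d_T)=\int_\X(D_xd_T(\eta-\delta_x,A))^2\,d\eta(x)$ and for squaring the one-sided bound.
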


\begin{proof}
For this proof we adapt arguments from the proof of \cite[Proposition 13]{BLM_2003}. According to Proposition \ref{ConvDistProp} we can choose a map $\hat{u}:\X\to\R$ with $\lVert \hat{u}\rVert_\xi\leq 1$ and a probability measure $\hat{\zeta}$ on $A$ satisfying
\[
d_T(\xi,A) = \sum_{x\in\xi} \hat{u}(x) \E_{\hat{\zeta}(\nu)} [(\xi(x) - \nu(x))_+].
\]
Then, for any $z\in\xi$, we have
\[
d_T(\xi - \delta_z, A)\geq \min_{\zeta\in\mathcal{M}(A)} \sum_{x\in\xi-\delta_z} \hat{u}(x) \E_{\zeta (\nu)} [((\xi - \delta_z)(x) - \nu(x))_+].
\]
Choose some $\tilde{\zeta}\in\mathcal{M}(A)$ that achieves the minimum in the above right hand side. Then
\[
d_T(\xi,A)\leq \sum_{x\in\xi} \hat{u}(x) \E_{\tilde{\zeta} (\nu)} [(\xi(x) - \nu(x))_+].
\]
It follows that
\begin{align*}
&d_T(\xi,A) - d_T(\xi- \delta_z,A)\\ &\leq \sum_{x\in\xi} \hat{u}(x) \E_{\tilde{\zeta} (\nu)} [(\xi(x) - \nu(x))_+] - \sum_{x\in\xi-\delta_z} \hat{u}(x) \E_{\tilde{\zeta} (\nu)} [((\xi - \delta_z)(x) - \nu(x))_+]\\
&= \hat{u}(z) \E_{\tilde{\zeta} (\nu)} [(\xi(z) - \nu(z))_+ - (\xi(z) - \nu(z) - 1)_+]\\
&= \hat{u}(z) \E_{\tilde{\zeta} (\nu)} [\ind\{\xi(z) > \nu(z)\}] \leq \hat{u}(z).
\end{align*}
This yields
\[
\int_\X (d_T(\xi,A) - d_T(\xi- \delta_z,A))^2 d\xi(z) \leq \int_\X \hat{u}(z)^2 \xi(z) = \lVert \hat{u}\rVert_\xi^2 \leq 1.
\]
Hence, almost surely
\[
\int_\X (D_x d_T(\eta - \delta_x, A))^2 d\eta(x)\leq 1.
\]
Applying the Poincare Inequality for Poisson processes (see e.g. \cite[Remark 1.4]{W_2000}) yields $\V d_T(\eta, A) \leq 1$.
\end{proof}

As a final ingredient for the upcoming proof of the convex distance inequality, we derive the following consequence of the Cauchy-Schwarz Inequality.

\begin{lem} \label{cauchySchwarzApp}
Let $\xi\in\OX_{\rm fin}$ and consider the measure space $(\X, \mathcal{X}, \xi)$. Then for any measurable map $h:\X\to\R$,
\begin{align} \label{cauchySchwarzEq}
\sup_{\lVert u \rVert_\xi \leq 1} \int_\X u(x)h(x) d\xi(x) = \lVert h \rVert_\xi.
\end{align}
\end{lem}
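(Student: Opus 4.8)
The plan is to recognise \eqref{cauchySchwarzEq} as nothing more than the statement that the $L^2(\xi)$-norm is its own dual norm, specialised to the finite-dimensional Hilbert space $L^2(\X,\mathcal{X},\xi)$. Since $\xi\in\OX_{\rm fin}$ is a finite integer-valued measure, its support consists of finitely many atoms each carrying finite mass, so that $\lVert h\rVert_\xi<\infty$ for every measurable $h:\X\to\R$, and the quantity $\int_\X u(x)h(x)\,d\xi(x)$ is a well-defined finite sum for every $u$ with $\lVert u\rVert_\xi\le 1$. In particular all the expressions in \eqref{cauchySchwarzEq} are finite.

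First I would prove the inequality ``$\le$''. By the Cauchy--Schwarz inequality in $L^2(\xi)$, for every measurable $u$ with $\lVert u\rVert_\xi\le 1$ one has $\int_\X u(x)h(x)\,d\xi(x)\le \lVert u\rVert_\xi\,\lVert h\rVert_\xi\le \lVert h\rVert_\xi$; taking the supremum over all such $u$ yields $\sup_{\lVert u\rVert_\xi\le 1}\int_\X u(x)h(x)\,d\xi(x)\le \lVert h\rVert_\xi$.

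For the reverse inequality ``$\ge$'', I would distinguish two cases. If $\lVert h\rVert_\xi=0$, then the left-hand side of \eqref{cauchySchwarzEq} is non-negative (it suffices to plug in $u\equiv 0$), hence at least $\lVert h\rVert_\xi=0$. If $\lVert h\rVert_\xi>0$, then the function $u^\ast:=h/\lVert h\rVert_\xi$ is measurable, satisfies $\lVert u^\ast\rVert_\xi=1$, and gives $\int_\X u^\ast(x)h(x)\,d\xi(x)=\lVert h\rVert_\xi$, so the supremum is at least $\lVert h\rVert_\xi$. Combining the two bounds establishes \eqref{cauchySchwarzEq}.

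There is essentially no obstacle here; the statement is a routine consequence of elementary Hilbert space theory. The only point I would make explicit, since it will be convenient when the lemma is invoked in the proof of the convex distance inequality, is that the supremum in \eqref{cauchySchwarzEq} is actually attained, namely by $u^\ast$ whenever $h$ does not vanish $\xi$-almost everywhere (and trivially by any admissible $u$ otherwise).
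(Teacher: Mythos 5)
Your proof is correct and follows essentially the same route as the paper: Cauchy--Schwarz for the inequality $\le$, and the normalised function $u^\ast = h/\lVert h\rVert_\xi$ to attain the supremum. The only difference is that you handle the degenerate case $\lVert h\rVert_\xi = 0$ explicitly, which the paper's proof glosses over; this is a harmless refinement, not a different argument.
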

\begin{proof}
Note that $h$ is of course square-integrable with respect to $\xi$. Hence, by the Cauchy-Schwarz Inequality, for any $u$ such that $\lVert u\rVert_\xi\leq 1$,
\begin{align*}
\int_\X u(x)h(x) d\xi(x) \leq \lVert u\rVert_\xi \lVert h\rVert_\xi \leq \lVert h\rVert_\xi.
\end{align*}
We see that the LHS in (\ref{cauchySchwarzEq}) is less or equal to the RHS. Moreover, we can take $u = h / \lVert h \rVert_\xi$ to conclude that the RHS is less or equal to the LHS.
\end{proof}

\begin{proof}[Proof of Theorem \ref{convDistIneq}]
Here we adapt arguments from the proofs of \cite[Lemma 1 and Corollary 1]{BLM_2009}. We will prove below that
\begin{align}
\label{cond1}&0\leq D_x (d_T(\xi,A)^2) \leq 2 \ \ \text{for any} \ \ (x,\xi)\in\X\times\OX_{\rm fin}\\
\label{cond2}&\text{and almost surely} \ \ V^+(d_T(\eta,A)^2) \leq 4 d_T(\eta,A)^2.
\end{align}
Hence, if follows from Theorem \ref{arbSignThm} and Theorem \ref{vPlusLowerThm} {(where the latter result is applied to the Poisson functional $\tfrac 12 d_T(\eta,A)^2$)} that
\begin{enumerate}
\item For any $\lambda\in(0,1/2)$,
\begin{align*}
\log \E(\exp(\lambda(d_T(\eta,A)^2 - \E d_T(\eta,A)^2))) \leq \frac{2\lambda^2 \E d_T(\eta,A)^2}{1 - 2\lambda},
\end{align*}
\item For any $r\geq 0$,
\begin{align*}
\P(d_T(\eta,A)^2\leq \E d_T(\eta,A)^2 - r) \leq \exp\left(-\frac{r^2}{8\E d_T(\eta,A)^2}\right).
\end{align*}
\end{enumerate}
{Taking $\lambda = 1/10$} we obtain from (i) that
\begin{align*}
\E e^{d_T(\eta,A)^2/{10}} \leq \exp\left(\frac{\E d_T(\eta,A)^2}{8}\right).
\end{align*}
Moreover, since $\eta\in A$ implies $d_T(\eta,A) = 0$, it follows from (ii) with $r = \E d_T(\eta,A)^2$ that
\begin{align*}
\P(\eta\in A) \leq \P\left(d_T(\eta, A)^2 \leq \E d_T(\eta,A)^2 - \E d_T(\eta,A)^2\right) \leq \exp\left(-\frac{\E d_T(\eta,A)^2}{8}\right).
\end{align*}
So, the result follows once we have proven (\ref{cond1}) and (\ref{cond2}). To prove (\ref{cond2}), first observe that $d_T(\cdot,A)$ is a non-decreasing functional. Using this and Proposition \ref{convDistProp} we compute
\begin{align*}
V^+(d_T(\eta,A)^2) &= \int_\X (d_T(\eta,A)^2 - d_T(\eta-\delta_x,A)^2)^2 \ d\eta(x)\\
&= \int_\X (d_T(\eta,A) - d_T(\eta-\delta_x,A))^2 (d_T(\eta,A) + d_T(\eta-\delta_x,A))^2 \ d\eta(x)\\
&\leq \int_\X (d_T(\eta,A) - d_T(\eta-\delta_x,A))^2 4d_T(\eta,A)^2  \ d\eta(x)\\ 
&= 4d_T(\eta,A)^2 \int_\X (D_x d_T(\eta-\delta_x,A))^2   \ d\eta(x) \leq 4d_T(\eta,A)^2.
\end{align*}
It remains to prove (\ref{cond1}). For this, let $(z,\xi)\in\X\times \OX_{\rm fin}$. Then, according to Proposition \ref{ConvDistProp}, we can write
\begin{align*}
d_T(\xi,A) &= \max_{\lVert u \rVert_{\xi} \leq 1} \sum_{x\in\xi} u(x) \E_{\hat{\zeta}(\nu)} [(\xi(x)-\nu(x))_+]\\
&=\max_{\lVert u \rVert_{\xi} \leq 1} \int_\X u(x) \E_{\hat{\zeta}(\nu)} \left[\left(1-\frac{\nu(x)}{\xi(x)}\right)_+\right] \ d\xi(x)
\end{align*}
for some probability measure $\hat{\zeta}$ on $A$. By virtue of Lemma \ref{cauchySchwarzApp}, the latter expression equals
\begin{align*}
\sqrt{\int_\X \left(\E_{\hat{\zeta}(\nu)} \left[\left(1-\frac{\nu(x)}{\xi(x)}\right)_+\right]\right)^2 d\xi(x)}.
\end{align*}
Invoking Proposition \ref{ConvDistProp} and again Lemma \ref{cauchySchwarzApp}, we also obtain
\begin{align*}
d_T({\xi + \delta_z},A) &\leq \max_{\lVert u \rVert_{\xi + \delta_z} \leq 1} \sum_{x\in\xi+\delta_z} u(x) \E_{\hat{\zeta}(\nu)} [((\xi + \delta_z)(x)-\nu(x))_+]\\
&= \sqrt{\int_\X \left(\E_{\hat{\zeta}(\nu)} \left[\left(1-\frac{\nu(x)}{(\xi + \delta_z)(x)}\right)_+\right]\right)^2 d(\xi + \delta_z)(x)}.
\end{align*}
From this it follows that
\begin{align*}
D_zd_T^2 \leq \left(\E_{\hat{\zeta}(\nu)} \left[\left(1-\frac{\nu(z)}{\xi(z)+1}\right)_+\right]\right)^2 (\xi(z)+1) - \left(\E_{\hat{\zeta}(\nu)} \left[\left(1-\frac{\nu(z)}{\xi(z)}\right)_+\right]\right)^2 \xi(z)
\end{align*}
where the subtrahend vanishes whenever $\xi(z)=0$. {Clearly, if $\xi(z)=0$, then the RHS in the above display is less or equal to $1$. So assume that $\xi(z)>0$.} Then, using the abbreviations $G(\nu,z) = (\xi(z)-\nu(z)+1)_+$ and $G'(\nu,z) = (\xi(z)-\nu(z))_+$, one observes that the RHS in the last display can be upper bounded by
\begin{align*}
\frac{(\E_{\hat{\zeta}(\nu)} G(\nu,z))^2 - (\E_{\hat{\zeta}(\nu)} G'(\nu,z))^2}{\xi(z)+1} &= \frac{\E_{\hat{\zeta}(\nu)}  [G(\nu,z) - G'(\nu,z)]\ \E_{\hat{\zeta}(\nu)}[G(\nu,z) + G'(\nu,z)]}{\xi(z)+1}\\
&\leq \frac{\E_{\hat{\zeta}(\nu)}[G(\nu,z) + G'(\nu,z)]}{\xi(z)+1}\\
&=\E_{\hat{\zeta}(\nu)}\left[ \frac{(\xi(z) - \nu(z) + 1)_+ + (\xi(z) - \nu(z))_+}{\xi(z)+1}\right]\\
&\leq 2.
\end{align*}
It follows that $D_z(d_T(\xi,A)^2) \leq 2$.

The functional $d_T(\cdot, A)$ is non-decreasing and non-negative, thus $D_z(d_T(\xi,A)^2)\geq 0$. This concludes the proof.
\end{proof}

\renewcommand{\bibname}{References}
\defbibheading{bibliography}[\refname]{\section*{#1}}
\printbibliography

\end{document}